\numberwithin{equation}{section}
\numberwithin{figure}{section}
\numberwithin{figure}{section}
\numberwithin{figure}{section}
\newtheorem{theorem}{Theorem}[section]
\newtheorem{lemma}[theorem]{Lemma}
\newtheorem{proposition}[theorem]{Proposition}
\newtheorem{corollary}[theorem]{Corollary}
\theoremstyle{definition}
\newtheorem{definition}[theorem]{Definition}
\newtheorem{remark}[theorem]{Remark}
\numberwithin{equation}{section}
\newcommand{\R}{\mathbb{R}}
\newcommand{\N}{\mathbb{N}}
\newcommand{\Ha}{\mathcal{H}}
\newcommand{\beq}{\begin{equation}}
\newcommand{\eeq}{\end{equation}}
\newcommand{\eps}{\varepsilon}
\newcommand{\Div}{\operatorname{div}}
\newcommand{\pa}{\partial}
\newcommand{\medint}{-\kern -,5cm\int}
\newcommand{\medintinrigo}{-\kern -,315cm\int}
\def\Xint#1{\mathchoice
   {\XXint\displaystyle\textstyle{#1}}%
   {\XXint\textstyle\scriptstyle{#1}}%
   {\XXint\scriptstyle\scriptscriptstyle{#1}}%
   {\XXint\scriptscriptstyle\scriptscriptstyle{#1}}%
   \!\int}
\def\XXint#1#2#3{{\setbox0=\hbox{$#1{#2#3}{\int}$}
     \vcenter{\hbox{$#2#3$}}\kern-.5\wd0}}
\def\dashint{\Xint-}
\begin{document}
\title[Volume-preserving anisotropic mean curvature flow in 2D]{A variational approach to the volume-preserving \\ anisotropic mean curvature flow in 2D}

\author[A. Kubin]
{A. kubin}
\address[Andrea Kubin]{
	Department of Mathematics and Statistics,
P.O.\ Box 35 (MaD), FI-40014 University of Jyvaskyla}
\email{andrea.a.kubin@jyu.fi}

\author[D.A. La Manna]
{D. A.  La Manna}
\address[Domenico Angelo La Manna]{
	Università degli Studi di Napoli Federico II, Dipartimento di Matematica ed Applicazioni R. Caccioppoli.
}
\email{domenicoangelo.lamanna@unina.it}

\author[E. Pasqualetto]
{E. Pasqualetto}
\address[Enrico Pasqualetto]{
	Department of Mathematics and Statistics,
P.O.\ Box 35 (MaD), FI-40014 University of Jyvaskyla}
\email{enrico.e.pasqualetto@jyu.fi}

\keywords{Geometric evolutions; variational methods; minimizing movements scheme; mean curvature flow; anisotropic perimeter.}
\subjclass[2020]{53E10; 53E40;
49Q20; 37E35.}

\begin{abstract} 
In this article, we introduce a variational algorithm, in the spirit of the minimizing movements scheme, to model the volume-preserving anisotropic mean curvature flow in 2D.
We show that this algorithm can be used to prove the existence of classical solutions. Moreover, we prove that this algorithm converges to the global solution of the equation.
\end{abstract}

\maketitle
\tableofcontents

\section{Introduction}

In recent years, the study of algorithms that model geometric flows has attracted the interest of the mathematical community.
For example, we mention the pioneering works of Almgren, Taylor, and Wang in \cite{ATW1993}, and Luckhaus and Sturzenhecker in \cite{LS1995}, where they introduced a minimizing movement-type scheme
that approximate the mean curvature flow. Another algorithm used to approximate the mean curvature flow is the thresholding scheme introduced by Merriman, Bence, and Osher in \cite{MBO1992}; see also \cite{EO2015, E1993, LO2016, SE2019}.

This program has also been carried out in the fractional framework: for the implementation via a minimizing movements-type scheme, see \cite{CMP2015} (for the asymptotic analysis of the discrete algorithm under volume preserving, see \cite{DGKK2023}); for the thresholding scheme approach, see \cite{CS2010}. 
We also highlight recent developments on the approximation of the surface diffusion flow. A minimizing movements-type algorithm was proposed in \cite{CaTa1994} to model it. For a rigorous proof of the convergence of this algorithm, see \cite{CFJKsd}. In the case of anisotropic surface diffusion with elasticity, see \cite{Kub}. 

In this work, we introduce a new minimizing movements-type algorithm to model the volume-preserving anisotropic mean curvature flow, which also allows us to provide a new proof of the existence of classical solutions to the equation. We will present the algorithm in detail after a brief introduction to the equation. We recall that a smooth family of subsets of $\R^2$, denoted by $ \{ E_t \}_{ t \in [0,T)} $ for some $T>0$, is said to be a solution to the volume-preserving anisotropic mean curvature flow starting from $E_0$ if it satisfies
\begin{equation}\label{MAINEQsol}
		\left\{
		\begin{aligned}
			& V_t=     -\kappa^\varphi_{E_t}+ \dashint_{\pa E_t} \kappa^\varphi_{E_t}  \text{ on } \pa E_t,\\
			& E_0 \text{ initial datum, } 
		\end{aligned}
		\right.
	\end{equation}   
    where $V_t$ denotes the normal velocity, $k_{E_t}^\varphi(x)$ is the anisotropic mean curvature at the point $x \in \pa E_t$ and $\varphi$ is the given anisotropy that satisfies
    \begin{equation}
     \varphi \in C^\infty \text{ and }\nabla^2 \varphi (\nu) \xi \cdot \xi \geq C \vert \xi \vert^2 \quad \forall \nu \in \mathcal{S}^1,\,\xi \in \R^2 \text{ such that } \nu \bot \xi.
    \end{equation}
We briefly outline the physical and mathematical motivations behind the study of \eqref{MAINEQsol}. The anisotropic mean curvature flow with volume constraint naturally arises in physical processes involving the evolution of interfaces under anisotropic surface tension, subject to mass or volume conservation. This flow describes the motion of an interface whose normal velocity is proportional to its anisotropic mean curvature -- a geometric quantity that measures how the surface bends relative to a given (possibly crystalline) surface energy density. 
The volume constraint reflects incompressibility or mass conservation, as observed in the dynamics of grains in polycrystalline materials, vesicle membranes, or bubbles in anisotropic fluids. Physically, such flows capture coarsening phenomena, equilibrium shapes of crystals, and interface evolution in materials science. Foundational treatments of anisotropic surface energies and motion by anisotropic curvature can be found in the works of Taylor \cite{Taylor1991}, Gurtin \cite{Gurtin1982}, and Cahn–Hoffman \cite{CahnHoffman1972}, among others. 
The volume-preserving nature of the flow models physical constraints in closed systems and is closely related to theories of constrained capillarity and incompressible multi-phase flows \cite{AMcFW1998,EG1994}. If the initial set $E_0$ is sufficiently regular -- e.g., if it satisfies the interior and exterior ball conditions -- then the equation \eqref{MAINEQsol} admits a unique smooth solution for a short time interval \cite{ES1998}. 
A classical result by Huisken \cite{H1987} shows that, for convex initial sets, the solution exists for all times and converges exponentially fast to a sphere. Similarly, it follows from \cite{ES1998, N2021} that if the initial set is sufficiently close to a local minimizer of the isoperimetric problem, then the flow remains smooth and converges exponentially fast (see also \cite{DGDKK} for an analysis in the flat torus). 
However, for generic initial sets, the equation \eqref{MAINEQsol} may develop singularities in finite time \cite{M2001, MS2000}. In contrast to the standard mean curvature flow, \eqref{MAINEQsol} can develop singularities even in the plane, and the boundary may collapse in such a way that the curvature remains uniformly bounded up to the singular time, see \cite{EI2005, M2001, MS2000}.
There is a significative difference between \eqref{MAINEQsol} and the mean curvature flow: the former is nonlocal and does not satisfy the comparison principle. As a result, we cannot directly apply the notion of viscosity solutions to define level-set solutions using the methods introduced by Chen–Giga–Goto \cite{CGG1999} and Evans–Spruck \cite{ES1991}. However, in \cite{KK2020}, Kim and Kwon were able to construct a viscosity solution to \eqref{MAINEQsol} in the case of star-shaped sets. 
An important feature of \eqref{MAINEQsol} is that it can be formally interpreted as the $L^2$-gradient flow of the surface area functional. Since it also preserves volume, it may be regarded as the evolutionary analogue of the isoperimetric problem.
At the core of this idea lies the gradient flow structure of mean curvature flow: trajectories in state space follow the path of steepest descent of the area functional with respect to an $L^2$-metric. This perspective, in fact, inspired De Giorgi \cite{DeG1993} to introduce the theory of minimizing movements for general gradient flows in metric spaces, later developed in \cite{AGS2005}. Given a metric
$d$ and an energy functional $E$, each time step of this abstract scheme corresponds to solving the minimization problem
\begin{equation}
    x_k \in \arg \min \left\{ E(x)+ \frac{1}{2h} d^2(x,x_{k-1})  \right\} ,
\end{equation}
where $h>0$ is the ``length'' of the discrete time step. 
    However, the difficulty lies in the fact that the candidate metrics for computing the gradient flow of the area functional are completely degenerate, in the sense that the induced distance vanishes identically \cite{MM2006}.
A natural question, therefore, is whether one can introduce a minimizing movements-type algorithm to model the volume-preserving anisotropic mean curvature flow. The prototype algorithm can be described as follows:
given an initial set $E_0$ with volume $1$ and a time step $h>0$, the sets $E_k$ for $k \in \N$ are obtained iteratively by minimizing the functionals
\begin{equation}\label{17072025_1}
   E \mapsto P_\varphi(E)+ \frac{1}{2h}d_{L^2}^2(E;E_{k-1}),
\end{equation}
in the class of sets $E$ with volume $1$, and
where we have denoted the anisotropic perimeter by $P_\varphi$ and the candidate approximation for the $L^2$-metric by $d_{L^2}$.

The aim of our work is to introduce a minimizing movements scheme to model the volume-preserving anisotropic mean curvature flow.
To this end, we search for a function $d_{L^2}$ with the following property: given two sets $E,F \subset \R^2$ that are sufficiently close to each other, we have
\begin{equation}\label{L^2introduzione}
    d_{L^2}(F;E)= \left( \int_{\pa E} \vert \xi_{F,E} \vert^2 \, d \mathcal{H}^1 \right)^{\frac{1}{2}}= \sup_{\| f \|_{L^2(\pa E)} \leq 1} \int_{\pa E} f(x)\xi_{F,E}(x) \, d \mathcal{H}^1_x,
\end{equation}
where $\xi_{F,E}(x)$, for $x \in \pa E$,  is an approximation of the normal signed distance between the boundaries $\pa E $ and $\pa F$.  As a second objective, we aim to introduce a flexible algorithm -- that is, one capable of modeling various geometric flows, such as surface diffusion or the Mullins–Sekerka flow. 
In fact, in the definition \eqref{L^2introduzione}, if we replace the space of test functions $ f \in L^2(\pa E)$ with $f \in H^1(\pa E)$ or $ f \in H^{\frac{1}{2}}(\pa E)$, then we expect that our algorithm models the corresponding gradient flow of the area functional in the $H^{-1}$ or $H^{-\frac{1}{2}}$ metric, respectively. 
In this way, one might recover the surface diffusion flow and the Mullins–Sekerka flow. Before presenting our algorithm, we recall the  minimizing movements-type algorithm introduced by \cite{ATW1993, LS1995} for modeling motion by mean curvature.
Instead of modeling the $L^2$ distance directly, they model the one-half squared distance $\frac{1}{2}d^2_{L^2}$, as follows: given $E \subset \R^2$ and $F \subset \R^2$, 
\begin{equation}\label{17072025_2}
    \frac{1}{2}d_{L^2}^2(F;E)= \int_{F \Delta E} \mathrm{dist}(x, \pa E)\, d x,
\end{equation}
where $\mathrm{dist}(x, \pa E)$ denotes the distance of $x$ from the boundary $\pa E$.
The reason it mimics an $L^2$-metric is the following: if $\pa F$ coincides with the normal graph of a function defined on $\pa E$, that is,
\begin{equation}\label{normagraphintro}
    \pa F= \{ x+ \psi(x)\nu_{E}(x) \colon \, x \in \pa E \},
\end{equation}
where $\nu_E$ denotes the outer unit normal vector, then
\begin{equation}
    \frac{1}{2}d_{L^2}^2(F;E)= \int_{F \Delta E} \mathrm{dist}(x, \pa E)\, d x= \frac{1}{2}\int_{\pa E} \psi^2\, d \mathcal{H}^1 + \frac{1}{3} \int_{\pa E} | \psi |^3 \kappa_E \, d \mathcal{H}^1,
\end{equation}
where $\kappa_E(x)$ denotes the curvature at $ x \in \pa E$. In any case, we observe that it is not clear whether \eqref{17072025_2} can be written in the form of \eqref{L^2introduzione}, and thus it can be used to model other geometric flows as well. 
The algorithm introduced by Almgren, Taylor, and Wang, as well as by Luckhaus and Sturzenhecker, has been extensively studied in the mathematical literature. In the isotropic case, that is, when $\varphi$ coincides with the Euclidean norm, the volume constraint in the minimizing movements scheme is replaced by a volume penalization. 
Thus, the functional \eqref{17072025_1} becomes
\begin{equation}
    E \rightarrow P_\varphi(E)+ \frac{1}{2h}d_{L^2}^2(E;E_{k-1})+ \frac{1}{\sqrt{h}} \vert \vert E \vert- 1 \vert.
\end{equation}
For this functional, the existence of flat flows (weak solution) was studied in \cite{MSS2016}, and the consistency of the algorithm was proved in \cite{JN}. For results on the asymptotic behavior in the plane and in the flat torus, see instead \cite{ADGK2025, DGK2023, FJM2022, JMPS2023, JN2023}. In the anisotropic case, the existence of a flat flow (weak solution) and its asymptotic behavior in the plane were studied in \cite{KimKwon2025}.
However,  the problem regarding the consistency of flat flow solutions was not addressed in \cite{KimKwon2025}.
In the case of non-smooth anisotropy, we mention \cite{BCCN2009} where the authors study the volume preserving crystalline mean curvature flow starting from a convex set. In \cite{CKub} the authors compute  explicitly the anisotropic crystalline mean curvature flow with a volume constraint for a rectangle in the plane and on the lattice $\varepsilon\mathbb{Z}^2$ as $\varepsilon \to 0^+$. 

The function we introduce to model the $L^2$-distance is the following (see also \eqref{defdistL2}):
\begin{equation}\label{bernardbolzano}
    d_{L^{2}}(F;E):=\sup_{\| f\|_{L^2(\pa E)}\leq1}\int_{\R^2}f(\pi_{\pa E}(x))(\chi_F(x)-\chi_E(x))\,dx,
\end{equation}
where $\pi_{\pa E}$ denotes the projection onto $\pa E$.
Moreover, as shown in Lemma \ref{lem:l2dist}, under assumption \eqref{normagraphintro}, we have
\begin{equation}
    d_{L^2}^2(F;E)= \int_{\pa E} \vert \psi + \frac{\psi^2}{2}\kappa_E \vert^2 \, d \mathcal{H}^1.
\end{equation}
Recall that if we restrict the analysis to $f \in H^1(\pa E)$ in definition \eqref{bernardbolzano}, we get flat flow solutions to the surface diffusion equation (see
 \cite{CFJKsd}  and also \cite{Kub} for surface diffusion with elasticity). Based on these observations, we believe that if we let $f \in H^{\frac{1}{2}}(\pa E)$ in \eqref{bernardbolzano}, the algorithm would converge to solutions of the Mullins–Sekerka equation.
 
The algorithm we implemented is described in detail in Section \ref{sec:esistenza}, and in particular in Subsection \ref{canebernerdabolzano}, where we introduce a specific notion of flat flow. The main theorems of the paper are Theorem \ref{thmausilmain} and Theorem \ref{03thmfinaleGLOB}.

In Theorem \ref{thmausilmain}, the minimizing movements scheme introduced in Subsection \ref{canebernerdabolzano} is used to prove the existence of classical solutions to equation \eqref{MAINEQsol}, starting from sufficiently regular initial data.

In the last result of the paper (see Theorem \ref{03thmfinaleGLOB}), we establish the consistency of the minimizing movements algorithm -- namely, the convergence of the algorithm to the classical solution of the equation \eqref{MAINEQsol} -- throughout the entire time interval where the solution exists.

\section{Notation and preliminary results}
In this paper, we work in the two-dimensional Euclidean space $\R^2$. We denote by $\cdot$ the standard inner product in $\R^2$ and by $ \vert \cdot \vert $ the corresponding Euclidean norm. For $r>0$ we define the open ball of radius $r>0$ centered at $x \in \R^2$ by  $B_r(x)= \{ y \in \R^2 : \vert x- y \vert < r \}$.
When $x=0$, we simply write $B_r:=B_r(0)$. The unit circle is denoted by $\mathcal{S}^1:= \pa B_1$. Given any set $A\subset \R^2$, we denote by ${\rm cl}( A)$ and $ {\rm int} (A)$ its topological closure and interior, respectively, with respect to the Euclidean topology. The Lebesgue measure of a Borel set $A\subset \R^2$ is denoted by $\vert A \vert$, and $\mathcal{H}^1$ denotes the one-dimensional Hausdorff measure. The Hausdorff distance between sets is denoted by $\mathrm{dist}_{\mathcal{H}}$. Let $\varphi $ be a norm in $\R^2$. We define
\begin{equation}\label{mMvarphi}
m_\varphi := \inf_{\vert \nu \vert=1 } \varphi (\nu)\quad\text{ and }\quad M_\varphi := \sup_{\vert \nu \vert=1} \varphi (\nu).
\end{equation} 

Given $a,b \in \R^2$, we define the linear map $a \otimes b : \R^2 \rightarrow \R^2$ by $ a\otimes b (x):=(x \cdot b)a$. We denote by $\R^{2 \times 2}$ the space of $2 \times 2$ real matrices. Given $P,C \in \R^{2 \times 2}$, we define $P : C:=\sum_{i,j=1}^2 p_{ij}c_{ij}$. Throughout the paper, we use the notation $C(*,\cdots,*)$ to indicate a generic positive constant, which may vary from line to line, depending only on the parameters $*,\cdots,*$.
\subsection{Functional spaces}

Let $E\subset \R^2$ be a bounded set. We say that $E$ is of class $C^k$ if, for every point $x\in \pa E$, there exists an open neighborhood $U$ of $x$ such that $\pa E \cap U$ coincides with the graph of a $C^k$ function. 
If $E$ is an open bounded set of class $C^1$, then the outer unit normal vector to \(\pa E\) is well defined at every point $x\in \pa E$, and we denote it by $\nu_E(x)$. Moreover, at each $x\in \pa E$, we define the tangent vector $\tau_E(x)$  as the clockwise rotation by \(90^\circ\) of the outer unit normal vector $\nu_E(x)$. Let $A \subset \R^2$, and for a given $\delta>0$, we define
$$
I_\delta( A):=\{x\in\R^2:\,\text{dist}(x, A)<\delta\}.
$$	
We denote by $ d_E$ the signed distance function from $ \pa E$, that is,
\begin{equation}\label{distsd}
d_E(x):=
\left\{
\begin{array}{ll}
\mathrm{dist}(x, \pa E)\\
-\mathrm{dist}(x,\pa E) 
\end{array}\quad\begin{array}{ll}
\text{for }x \in \R^2 \setminus E,\\
\text{for }x\in E.
\end{array}\right.
\end{equation}

 The differential of a scalar function $f$ (respectively, a vector field $X$) along $\pa E$ is denoted by $\pa_{\tau} f$ (respectively, $ \pa_{\tau} X$). When clarification is needed, we write $ \pa_{\pa E} f$ (respectively, $ \pa_{\pa E} X$) instead. The tangential gradient on $\pa E$, denoted by $ \nabla_\tau$ (or $\nabla_{\pa E}$), is defined as
\[\nabla_{\tau} f:= \nabla f - (\nabla f \cdot\nu_E) \nu_E=(\pa_\tau f)\tau_E\, .\]
Any vector field $X\in C^1(\pa E, \R^2)$ can be extended to a $C^1$ vector field in an open neighborhood of $\pa E$; with a slight abuse of notation, we continue to denote this extension by $X$.
The tangential divergence of $X$ on $\pa E$ is defined by
\[
\Div_\tau X:= \Div X-  ( \nabla X \nu_E) \cdot \nu_E.
\]
Note that this definition is independent of the chosen extension of $X$. 
We denote the Laplace–Beltrami operator on $\pa E$ by $\pa_{\tau}^2 $. If $E$ is of class $C^k$ with $k \geq 2$, we recall that
the curvature $\kappa_E:\pa E\to\R$ and the second fundamental form $B_E:\pa E\to\R^{2\times2}$ are given by
$$
\kappa_E:=\Div_\tau\nu_E, \qquad B_E:=\kappa_E\,\tau_E\otimes\tau_E.
$$
Finally, for an open bounded set $E \subset \R^2$ of class $C^2$, we define
    \begin{equation}\label{sigmaE0906}
        \sigma_E:= \frac{1}{2\| \kappa_E \|_{L^\infty(\pa E)}  }.
    \end{equation}
Note that the above definition is well posed, since \(\|\kappa_E\|_{L^\infty(\pa E)} >0\) by the boundedness of \(E\).
\subsection{Sets of finite perimeter and anisotropic perimeter}

We briefly recall the definition of set of finite perimeter and introduce some notions that will be used throughout the manuscript. For a detailed discussion, we refer the reader to \cite{AmbFuscPall}.

Let $E \subset \R^2$ be a Borel set. The De Giorgi perimeter of $E$ is defined as  
\begin{equation}
     P(E)=\sup \left\{  \int_{E} \Div \Psi \,dx \colon \, \Psi \in C^1_c(\R^2, \R^2), \, \| \Psi \|_{\infty} \leq 1 \right\}. 
\end{equation}
We say that a Borel set $E \subset \R^2$ 
is a set of finite perimeter if $P(E) < + \infty$.  
If  $E  \subset \R^2$ is a set of finite perimeter, then its reduced boundary $\partial^* E \subset \R^2$ is well defined, along with the measure-theoretic outer unit normal vector field $\nu_{E}: \partial^* E \rightarrow \mathbb{R}^2$, which is a Borel measurable map. Recall that \(P(E)=\Ha^1(\pa^* E)\). Here and in the following, when $E$ is a set of finite perimeter we shall tacitly assume that $E$ denotes a representative such that $\pa E={\rm cl}(\pa^*E)$.
Observe that if an open set $E$ is of class $C^1$,  then the measure-theoretic outer unit normal vector field coincides with the classical outer unit normal vector field.
 \begin{definition}[Regular strictly convex norm]
    Let $\varphi \in C^\infty(\R^2 \setminus \{0 \})$ be a norm. We say that $\varphi$ is strictly convex if there exists a constant $C>0$ such that  \begin{equation}\label{unifell}
     \nabla^2 \varphi (\nu) \xi \cdot \xi \geq C \vert \xi \vert^2 \quad \forall \nu \in \mathcal{S}^1,\,\xi \in \R^2 \text{ such that } \nu \bot \xi.
\end{equation}
\end{definition}
\begin{remark}
    If $ \varphi$ is a regular strictly convex norm, then there exists a constant $J_\varphi>0$ such that   
  \begin{equation}\label{remform}
     \nabla^2 \varphi (\nu) \xi \cdot \xi \geq J_{\varphi} \vert \xi \vert^2 \quad \forall \nu \in \mathrm{cl}(I_\frac{1}{4}(\mathcal{S}^1)),\,\xi \in \R^2 \text{ such that } \nu \bot \xi.
\end{equation}  
\end{remark}
 Let $E$ be a set of finite perimeter, and given any Borel set $B \subset \R^2$, we define the relative $\varphi$-perimeter of $E$ in $B$ as
$$ P_{\varphi}(E;B):= \int_{\pa^* E \cap B} \varphi(\nu_{E} )\, d \mathcal{H}^1 .$$
When $B=\R^2$, we simply write
\begin{equation}\label{eq:defn.phi.per}
 P_\varphi (E):=\int_{\pa^* E} \varphi(\nu_E )\, d\Ha^1.
 \end{equation}
If the set $E$ is of class $C^2$, the anisotropic curvature of $E$ at a point $x\in \pa E$ is well defined and given by
\begin{equation}\label{09062025curv}
\kappa_E^\varphi (x):= \Div_{\tau} (\nabla \varphi(\nu_E(x))). 
\end{equation}
\begin{remark}\label{remarchinoKvar}
    The anisotropic curvature of $E$ can also be expressed as \begin{equation}\label{formvarphicurv}
        \kappa_{E}^\varphi= \Div (\nabla \varphi(\nu_E))= \kappa_E\,(\nabla^2 \varphi )(\nu_E) : (\tau_E \otimes \tau_E) = g(\nu_E)\kappa_E
    \end{equation}
    for some function $g \in C^{\infty}(\R^2 \setminus \{0\})$. 
\end{remark}
	We recall an anisotropic version of the Gauss–Bonnet theorem for curves (see \cite{KimKwon2025} for a proof).
 \begin{lemma}\label{lGaussB}
     Let $\varphi \in C^\infty(\R^2\setminus \{0\})$ be a regular strictly convex norm. There exists a constant $C(\varphi)>0$ such that for every bounded open set $E \subset \R^2 $ of class $C^2$ such that
    \begin{equation}
        \int_{\pa E} \kappa_E^\varphi \varphi (\nu_E  ) \, d \mathcal{H}^1= C(\varphi).
    \end{equation}
\end{lemma}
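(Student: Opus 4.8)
The plan is to rewrite the anisotropic integrand as a fixed positive function of the Gauss map times the scalar curvature, and then to push the integral forward onto the unit circle via the Gauss map, so that it no longer sees $E$.

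By Remark~\ref{remarchinoKvar} we may write, on $\pa E$, $\kappa_E^\varphi=g(\nu_E)\kappa_E$ with $g\in C^\infty(\R^2\setminus\{0\})$; explicitly $g(\nu)=\nabla^2\varphi(\nu):(\tau\otimes\tau)$ for any unit $\tau\perp\nu$, and by the uniform ellipticity \eqref{unifell} we have $g\ge C>0$ on $\mathcal{S}^1$. Thus the integrand in the statement is $h(\nu_E)\kappa_E$, where $h:=g\,\varphi$ is smooth and strictly positive on $\mathcal{S}^1$: a fixed positive function of the unit normal times the (signed) curvature.

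Since $E$ is a bounded open set of class $C^2$, $\pa E$ is a finite disjoint union of simple closed $C^2$ curves. On each component, parametrize by arc length $\gamma\colon[0,L]\to\R^2$ compatibly with the orientation of $\nu_E$, and lift the Gauss map: $\nu_E(\gamma(s))=(\cos\theta(s),\sin\theta(s))$ with $\theta\in C^1$. Differentiating $\nu_E$ along $\gamma$ and using the sign conventions introduced above gives $\theta'(s)=-\kappa_E(\gamma(s))$, so $\kappa_E\,d\mathcal{H}^1=-\,d\theta$ along the component. Setting $F(u):=\int_0^u h(\cos v,\sin v)\,dv$ and using the fundamental theorem of calculus (no monotonicity of $\theta$ needed),
\begin{equation*}
\int_{\gamma}\kappa_E^\varphi\,\varphi(\nu_E)\,d\mathcal{H}^1=-\int_0^L h(\nu_E(\gamma(s)))\,\theta'(s)\,ds=-\bigl(F(\theta(L))-F(\theta(0))\bigr).
\end{equation*}
By the theorem of turning tangents (Hopf's Umlaufsatz) the outer boundary component is traversed clockwise with these conventions, hence $\theta(L)-\theta(0)=-2\pi$; since $h$ is $2\pi$-periodic, $F(\theta(L))-F(\theta(0))=-\int_0^{2\pi}h(\cos v,\sin v)\,dv$, and the displayed integral equals
\begin{equation*}
C(\varphi):=\int_0^{2\pi}g(\cos v,\sin v)\,\varphi(\cos v,\sin v)\,dv,
\end{equation*}
which is positive because $g$ and $\varphi$ are continuous and strictly positive on $\mathcal{S}^1$.

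The only genuine point of care is the orientation bookkeeping: fixing the sign in $\theta'=\pm\kappa_E$ and invoking Hopf's Umlaufsatz with the correct winding number. This argument also makes transparent that the universal constant $C(\varphi)$ is produced by a single boundary component; for $E$ with holes the same computation gives $\chi(E)\,C(\varphi)$, so the statement should be read with $\pa E$ connected (equivalently, $E$ simply connected), which is the setting relevant to the rest of the paper. A more conceptual but heavier alternative would be to check that $t\mapsto\int_{\pa E_t}\kappa_{E_t}^\varphi\,\varphi(\nu_{E_t})\,d\mathcal{H}^1$ is constant along any $C^2$ homotopy of bounded $C^2$ sets --- its $t$-derivative being the integral of a tangential divergence over a closed curve --- and then to evaluate it on the Wulff shape.
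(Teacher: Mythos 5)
Your argument is correct, and it is worth noting that the paper itself gives no proof of this lemma: it simply cites \cite{KimKwon2025}. Your proof is the natural self-contained one. Writing $\kappa_E^\varphi\,\varphi(\nu_E)=h(\nu_E)\,\kappa_E$ with $h=g\,\varphi>0$ on $\mathcal S^1$ (Remark \ref{remarchinoKvar} plus \eqref{unifell}), lifting the Gauss map to an angle $\theta$, and using $\partial_\tau\nu_E=\kappa_E\tau_E$ with the paper's convention that $\tau_E$ is the clockwise rotation of $\nu_E$ correctly yields $\theta'=-\kappa_E$ and, via the fundamental theorem of calculus and Hopf's Umlaufsatz, the value $C(\varphi)=\int_0^{2\pi}g(\cos v,\sin v)\,\varphi(\cos v,\sin v)\,dv>0$; the sanity check $\varphi=|\cdot|$, $E=B_1$ gives $2\pi$, as it should. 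Your orientation bookkeeping is the only delicate point and you handle it correctly, and your observation that the constant is really $\chi$-dependent (an annulus gives $0$, so the statement as displayed — which also contains an evident typographical slip in its phrasing — must be read for $\pa E$ connected, i.e.\ $E$ simply connected) is a genuine and accurate caveat that the paper leaves implicit; this is consistent with how the lemma is used later, where only sets that are small normal graphs over a fixed regular initial set appear. Compared with delegating to \cite{KimKwon2025}, your route is more elementary and makes the value and positivity of $C(\varphi)$ explicit; the homotopy-invariance alternative you sketch would work too but is heavier than needed in one dimension.
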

 
\subsection{Useful formulas for normal graphs}

In this subsection, we examine in detail some properties of sets whose boundaries can be described as ``small'' graphs over the boundary of a reference set. To this end, we recall the definition of a normal graph.

\begin{definition}\label{ngrafico}
   Let $E \subset \R^2 $ be an open bounded set of class $C^2$.
    Let $0 <\sigma \leq \sigma_E$, and let $F \subset \R^2$ be an open bounded set of class $C^1$ such that
    $$ F \Delta E \subset \mathrm{cl}(I_\sigma(\pa E)).$$ We say that $F$ is a normal graph over $\pa E$ if there exists a function $\psi \in C^1(\pa E,[-\sigma,\sigma])$ such that \begin{equation}\label{eq:f.normal.graph}
        \pa F= \{ x + \psi(x)\nu_{E}(x) \colon x \in \pa E\}.
    \end{equation}
\end{definition}
If $F$ is a normal graph over $\pa E$, then most of the relevant quantities can be expressed in terms of the height function, as shown in the following lemma.
\begin{lemma} \label{lem:norm.perimeter}
Let $E$ be an open bounded set of class $C^2$, and let $F$ be a normal graph over $\pa E$, in the sense of Definition \ref{ngrafico}. Then,
 \begin{equation}\label{nu_Fexp}
\nu_F(y)=\frac{-(\nabla_{\pa E}\psi)(\pi_{\pa E}(y))+(1+ \psi(\pi_{\pa E}(y))\kappa_{E}(\pi_{\pa E}(y)))\nu_E(\pi_{\pa E}(y))}{\sqrt{ (1+ \psi(\pi_{\pa E}(y))\kappa_{E}(\pi_{\pa E}(y)))^2+ \vert (\nabla_{\pa E} \psi)(\pi_{\pa E}(y)) \vert^2}},\quad y \in \pa F.
\end{equation}
Moreover, we have
\begin{equation}\label{per01102024}
P_\varphi(F)=\int_{\pa E} \varphi(-\nabla_{\pa E}\psi(x)+(1+ \psi(x)\kappa_{E}(x))\nu_E(x))\,d \mathcal{H}^1_x.
\end{equation}
\end{lemma}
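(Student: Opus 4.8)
The plan is to parametrize $\pa F$ explicitly using the given diffeomorphism $x \mapsto x + \psi(x)\nu_E(x)$ from $\pa E$ to $\pa F$, differentiate along $\pa E$ to obtain a tangent vector to $\pa F$, and then rotate by $90^\circ$ to read off the normal. Concretely, fix an arclength parametrization $\gamma \colon I \to \pa E$ of a piece of $\pa E$, so that $\gamma' = \tau_E\circ\gamma$ and, by the Frenet-type identities in the plane, $\tau_E' = -\kappa_E\,\nu_E$ and $\nu_E' = \kappa_E\,\tau_E$ (with the sign conventions fixed by the clockwise-rotation definition of $\tau_E$ in the excerpt). Then $\Gamma(s) := \gamma(s) + \psi(\gamma(s))\nu_E(\gamma(s))$ parametrizes $\pa F$, and
\[
\Gamma'(s) = \tau_E + (\pa_\tau\psi)\,\nu_E + \psi\,\kappa_E\,\tau_E = (1 + \psi\kappa_E)\,\tau_E + (\pa_\tau\psi)\,\nu_E,
\]
all quantities evaluated at $\gamma(s)$. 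Since $|\pa_\tau\psi|\,\tau_E = \nabla_{\pa E}\psi$ in magnitude and direction (up to the identification $(\pa_\tau\psi)\tau_E = \nabla_{\pa E}\psi$), the tangent vector to $\pa F$ is, in the $\{\tau_E,\nu_E\}$ frame, $(1+\psi\kappa_E,\ \pa_\tau\psi)$. The outer unit normal $\nu_F$ is obtained by rotating $\Gamma'/|\Gamma'|$ counterclockwise by $90^\circ$ (the inverse of the $\tau$-from-$\nu$ convention), which sends $(\tau_E,\nu_E)$-components $(a,b)$ to the vector $-b\,\tau_E + a\,\nu_E$; applying this to $(1+\psi\kappa_E, \pa_\tau\psi)$ and normalizing gives exactly \eqref{nu_Fexp}. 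One must check that the denominator never vanishes: since $|\psi|\le\sigma\le\sigma_E = \tfrac{1}{2\|\kappa_E\|_\infty}$, we have $1+\psi\kappa_E \ge \tfrac12 > 0$, so $\Gamma'\ne 0$ and the map is an immersion, hence (being also injective by the normal-graph hypothesis) a diffeomorphism onto $\pa F$; this also confirms the chosen orientation yields the \emph{outer} normal.

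For the perimeter formula \eqref{per01102024}, I would use the change of variables induced by the diffeomorphism $\Phi\colon \pa E \to \pa F$, $\Phi(x) = x + \psi(x)\nu_E(x)$. The Jacobian of $\Phi$ with respect to arclength on $\pa E$ is precisely $|\Gamma'(s)| = \sqrt{(1+\psi\kappa_E)^2 + |\nabla_{\pa E}\psi|^2}$ from the computation above, so the length element transforms as $d\Ha^1\!\!\restriction_{\pa F} = |\Gamma'|\, d\Ha^1\!\!\restriction_{\pa E}$. Therefore
\[
P_\varphi(F) = \int_{\pa F}\varphi(\nu_F)\,d\Ha^1 = \int_{\pa E}\varphi\big(\nu_F(\Phi(x))\big)\,|\Gamma'(x)|\,d\Ha^1_x,
\]
and since $\varphi$ is positively $1$-homogeneous, $\varphi(\nu_F(\Phi(x)))\,|\Gamma'(x)| = \varphi\big(\nu_F(\Phi(x))\,|\Gamma'(x)|\big) = \varphi\big(-\nabla_{\pa E}\psi(x) + (1+\psi(x)\kappa_E(x))\nu_E(x)\big)$ by \eqref{nu_Fexp}. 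This yields \eqref{per01102024}.

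The only genuinely delicate points are bookkeeping rather than conceptual: getting the signs in the Frenet relations and in the $90^\circ$ rotations consistent with the paper's convention that $\tau_E$ is the \emph{clockwise} rotation of $\nu_E$, and justifying that a single local arclength computation globalizes (which follows from $\pa E$ being a finite disjoint union of $C^2$ Jordan curves, so one covers $\pa E$ by finitely many arcs and the formulas are local and patch together). I expect the main obstacle to be purely notational care with orientation, ensuring that the formula \eqref{nu_Fexp} indeed produces the outer normal of $F$ and not its opposite; this is pinned down by the positivity $1+\psi\kappa_E>0$, which forces the $\nu_E$-component of $\nu_F$ to be positive, matching the geometric expectation that for small $\psi$ the normal of $F$ is a small perturbation of $\nu_E$.
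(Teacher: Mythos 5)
Your proposal is correct, and for the perimeter identity \eqref{per01102024} it follows essentially the paper's own route: a change of variables along the graph map $x\mapsto x+\psi(x)\nu_E(x)$ with tangential Jacobian $\sqrt{(1+\psi\kappa_E)^2+|\nabla_{\pa E}\psi|^2}$ (the paper computes this via $\nabla_{\pa E}\Psi$, you via an arclength parametrization — the same quantity), followed by the $1$-homogeneity of $\varphi$ to absorb the Jacobian into the argument of $\varphi$. The genuine difference is in \eqref{nu_Fexp}: the paper does not prove it at all but cites \cite{FJM2018}, whereas you give a self-contained elementary derivation via the planar Frenet relations $\pa_\tau\nu_E=\kappa_E\tau_E$, $\pa_\tau\tau_E=-\kappa_E\nu_E$ and a $90^\circ$ rotation of the tangent $(1+\psi\kappa_E)\tau_E+(\pa_\tau\psi)\nu_E$; this is consistent with the paper's convention that $\tau_E$ is the clockwise rotation of $\nu_E$, and the nonvanishing of the denominator follows as you say from $|\psi|\le\sigma_E$. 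The only soft spot is the orientation check that the rotated vector is the \emph{outer} normal of $F$: your appeal to the positivity of the $\nu_E$-component is adequate once one notes that, by Definition \ref{ngrafico}, $F$ coincides with $E$ outside $\mathrm{cl}(I_\sigma(\pa E))$ and hence is the subgraph region in tubular coordinates, so $\nu_F\cdot\nu_E>0$ at graph points; alternatively one can pin it down by a continuity argument in the family of height functions $t\psi$, $t\in[0,1]$, since at $t=0$ the formula returns $\nu_E$. In short, your argument buys a self-contained proof of the normal formula that the paper outsources, at the cost of a small amount of orientation bookkeeping.
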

\begin{proof}
The identity \eqref{nu_Fexp} can be found in \cite{FJM2018}.

To prove \eqref{per01102024},
we introduce the map  $\Psi: \pa E \to \pa F$, defined by $ \Psi(x):= x+\psi(x)\nu_E(x)$. Then a straightforward computation gives
\[
\nabla_{\pa E} \Psi = {\rm Id}_{2\times 2}+ \nabla_{\pa E} \psi \otimes \nu_E  + \psi \nabla_{\pa E} \nu_E
= ({\rm Id}_{2\times 2} + \psi B_{ E} )  + \nabla_{\pa E}\psi \otimes \nu_E 
.
\]
Hence, we get
\begin{equation}\label{eq:jacobian}
J_\tau \Psi=\sqrt{ (1+\psi \kappa_E)^2+ |\nabla_{\pa E}\psi|^2}.
\end{equation}
Thus, by the area formula, together with \eqref{eq:jacobian} and \eqref{nu_Fexp}, we obtain:
\begin{equation*}
    \begin{split}
        P_{\varphi}(F)=\int_{\pa F} \varphi(\nu_F)\,d \mathcal{H}^1&= \int_{\Psi(\pa E)} \varphi (\nu_F \circ \Psi \circ \pi_{\pa E} |_{\pa F}) \,d\mathcal{H}^1\\
        &=\int_{\pa E}  \varphi(\nu_F(\Psi))\sqrt{(1+ \psi \kappa_E)^2+ \vert \nabla_{\pa E} \psi \vert^2} \,d \mathcal{H}^1\\
        & = \int_{\pa E} \varphi(-\nabla_{\pa E}\psi(x)+(1+ \psi(x)\kappa_{E}(x))\nu_E(x))\,d \mathcal{H}^1_x,
       \end{split}
    \end{equation*}
where in the last line we used the fact that $\varphi$, being a norm, is $1$-homogeneous.
\end{proof}  
We recall the following technical lemma (for the proof, see \cite[Lemma 2.2]{Kub}).
\begin{lemma}\label{eq:transform.gradient}
    Let $E \subset \R^2$ be an open and bounded set of class $C^2$. Let $F \subset \R^2$ be an open and bounded set of class $C^1$ such that $\pa F $ as in \eqref{eq:f.normal.graph}.
    Let $G \in C^1(\pa F)$. We define $\hat{G} : \pa E \rightarrow \R$ as $ \hat{G}(x)= G(x+ \psi(x)\nu_E(x))$.
    Then 
    \begin{equation}\label{ruffini}
        \int_{\pa F} \vert \pa_{\pa F} G \vert^2 \, d \mathcal{H}^1= \int_{\pa E} \frac{\vert \pa_{\pa E} \hat{G} \vert^2}{\sqrt{ (1+ \psi \kappa_E)^2+ \vert \pa_{\pa E} \psi \vert^2}}\, d \mathcal{H}^1.
    \end{equation}
\end{lemma}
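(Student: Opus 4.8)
The plan is to reduce the identity to a one-dimensional change of variables along the curves $\pa E$ and $\pa F$, carried out through the map $\Psi\colon\pa E\to\pa F$, $\Psi(x):=x+\psi(x)\nu_E(x)$, that already appeared in the proof of Lemma~\ref{lem:norm.perimeter}. First I would record that $\Psi$ is a $C^1$ diffeomorphism: since $\psi$ takes values in $[-\sigma,\sigma]$ with $\sigma\le\sigma_E$ (Definition~\ref{ngrafico} and \eqref{sigmaE0906}), one has $|\psi\kappa_E|\le\sigma_E\|\kappa_E\|_{L^\infty(\pa E)}=\tfrac12$, hence $1+\psi\kappa_E\ge\tfrac12>0$ on $\pa E$; combined with the fact that $\pa F=\Psi(\pa E)$ by \eqref{eq:f.normal.graph} and that $\pi_{\pa E}\circ\Psi=\mathrm{id}_{\pa E}$, this shows $\Psi$ is a bijection onto $\pa F$ with $C^1$ inverse $\pi_{\pa E}|_{\pa F}$, so the area formula applies to it. From \eqref{eq:jacobian} we also have
\[
J_\tau\Psi=\sqrt{(1+\psi\kappa_E)^2+|\nabla_{\pa E}\psi|^2}=\sqrt{(1+\psi\kappa_E)^2+|\pa_{\pa E}\psi|^2}\ \ge\ \tfrac12 .
\]
The only geometric input I need beyond Lemma~\ref{lem:norm.perimeter} is the elementary remark that, because the source and target of $\Psi$ are one-dimensional, $J_\tau\Psi$ coincides with the local speed of $\Psi$ along $\pa E$: if $\gamma$ parametrizes (a connected component of) $\pa E$ by arc length, then $\bigl|\tfrac{d}{ds}\Psi(\gamma(s))\bigr|=J_\tau\Psi(\gamma(s))$.

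The core step is a pointwise chain-rule identity between the two tangential derivatives. Fix a unit-speed parametrization $\gamma$ of $\pa E$ and put $\beta:=\Psi\circ\gamma$, a regular (in general not arc-length) parametrization of $\pa F$ with $|\beta'(s)|=J_\tau\Psi(\gamma(s))=:\lambda(s)\ge\tfrac12$. Let $t(s):=\int_0^s\lambda(r)\,dr$, so $t'(s)=\lambda(s)$ and $\eta:=\beta\circ t^{-1}$ is a unit-speed parametrization of $\pa F$. Since $\hat G(\gamma(s))=G(\Psi(\gamma(s)))=G(\eta(t(s)))$, differentiating in $s$ and using $t'(s)=\lambda(s)$ gives
\[
\pa_{\pa E}\hat G(\gamma(s))=(\pa_{\pa F}G)\bigl(\eta(t(s))\bigr)\,\lambda(s)=(\pa_{\pa F}G)\bigl(\Psi(\gamma(s))\bigr)\,J_\tau\Psi(\gamma(s)),
\]
that is, pointwise on $\pa E$,
\[
\bigl|(\pa_{\pa F}G)\circ\Psi\bigr|^2=\frac{|\pa_{\pa E}\hat G|^2}{(J_\tau\Psi)^2}.
\]
Equivalently and more invariantly: $\nabla_{\pa E}\hat G$ is the $\Psi$-pullback of $\nabla_{\pa F}G$, and in one dimension pulling back rescales the modulus exactly by $J_\tau\Psi$. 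The parametrized argument is pure bookkeeping, but it is the one place where one must be careful with orientations; since only moduli enter the final formula, the choice of orientations of $\gamma$ and $\eta$ is immaterial.

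To conclude I would apply the area formula for $\mathcal{H}^1$ to the $C^1$ diffeomorphism $\Psi$, with integrand $|\pa_{\pa F}G|^2$:
\[
\int_{\pa F}|\pa_{\pa F}G|^2\,d\mathcal{H}^1=\int_{\pa E}\bigl|(\pa_{\pa F}G)\circ\Psi\bigr|^2\,J_\tau\Psi\,d\mathcal{H}^1=\int_{\pa E}\frac{|\pa_{\pa E}\hat G|^2}{(J_\tau\Psi)^2}\,J_\tau\Psi\,d\mathcal{H}^1=\int_{\pa E}\frac{|\pa_{\pa E}\hat G|^2}{J_\tau\Psi}\,d\mathcal{H}^1,
\]
and substituting the expression for $J_\tau\Psi$ from \eqref{eq:jacobian} gives \eqref{ruffini}. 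I do not anticipate any real obstacle here: the $C^1$ regularity of $F$ and of $G$ makes all the derivatives above classical and the integrands continuous, the bound $J_\tau\Psi\ge\tfrac12$ guarantees the right-hand side is a genuine continuous function on $\pa E$, and the only substantive ingredients — the Jacobian formula \eqref{eq:jacobian} and the bijectivity of $\Psi$ — are already available (or immediate) from Lemma~\ref{lem:norm.perimeter} and Definition~\ref{ngrafico}. If anything, the point demanding the most care is the reparametrization/chain-rule identity of the second paragraph, and even that becomes routine once one-dimensionality is exploited.
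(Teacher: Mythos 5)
Your argument is correct: the identity $\pi_{\pa E}\circ\Psi=\mathrm{id}_{\pa E}$ gives injectivity of $\Psi$, the one-dimensional chain rule through an arc-length reparametrization yields the pointwise identity $\vert\pa_{\pa E}\hat G\vert=\vert(\pa_{\pa F}G)\circ\Psi\vert\,J_\tau\Psi$, and the area formula with the Jacobian \eqref{eq:jacobian} then produces \eqref{ruffini} exactly. Note that the paper does not prove this lemma internally but defers to \cite[Lemma 2.2]{Kub}; your change-of-variables proof is the natural argument that fills this in, and it relies only on ingredients already available here (Definition \ref{ngrafico}, the bound $1+\psi\kappa_E\ge\tfrac12$ coming from \eqref{sigmaE0906}, and Lemma \ref{lem:norm.perimeter}).
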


We next state the formula for the anisotropic curvature of a normal graph. For a proof see \cite[Remark 4.8]{Kub}, while in the isotropic setting we refer to \cite{FJM2018} (see also \cite[Lemma 2.5]{CFJKsd}).

\begin{lemma}
    Let $E\subset \R^2$ be a set of class $C^3$, and let
    $F$ be a normal graph over $\pa E$  in the sense of Definition \ref{ngrafico}, with height function $\psi$. Set $\Psi(x):= x+\psi(x)\nu_E(x)$ for $x \in \pa E$.
   Then, the anisotropic curvature of $F$ satisfies
 \begin{equation}\label{interpol1}
        \begin{split}
        \kappa_F^{\varphi}\circ \Psi=&\,g (\nu_F \circ \Psi) \kappa_F \circ \Psi =  - g(\nu_E) \pa_{\tau}^2 \psi + \kappa_E^{\varphi}+ R_0 \quad\text{ on } \pa E,   \\
R_0:=&\,a(\nu_E,\psi\kappa_E,\pa_{\tau} \psi) \pa_{\tau}^2 \psi+ b(\nu_E,\psi \kappa_E,\pa_{\tau}\psi) \pa_{\tau} (\psi \kappa_E)+ c(\nu_E,\psi,\pa_{\tau} \psi,\kappa_E),
\end{split}
    \end{equation}
    where $a,b\in C^{\infty}$ and $c \in C^{\infty}$ are smooth functions such that
\begin{equation}
b(\cdot,0,0)=a(\cdot, 0,0)=c(\cdot,0,0,\cdot)=0,
\end{equation}
while the function \(g\in C^\infty(\R^2\setminus\{0\})\) is as in Remark \ref{remarchinoKvar}.
\end{lemma}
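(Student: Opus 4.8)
The plan is to treat the identity as an essentially local, if somewhat lengthy, computation: I will first express $\kappa_F^\varphi\circ\Psi$ explicitly in terms of $\psi$ and the geometry of $\pa E$ by working in a unit-speed parametrisation of $\pa E$, and then reorganise the resulting formula into the asserted shape. The ingredients are already available: by Remark \ref{remarchinoKvar} the anisotropic curvature factors as $\kappa_F^\varphi=g(\nu_F)\kappa_F$ with $g\in C^\infty(\R^2\setminus\{0\})$, while \eqref{nu_Fexp} and \eqref{eq:jacobian} give $\nu_F\circ\Psi$ and the tangential Jacobian $J:=J_\tau\Psi=\sqrt{(1+\psi\kappa_E)^2+|\pa_\tau\psi|^2}$ in terms of $\psi$. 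Since $\psi$ takes values in $[-\sigma,\sigma]$ with $\sigma\le\sigma_E$, one has $|\psi\kappa_E|\le\tfrac{1}{2}$ and hence $J\ge\tfrac{1}{2}$, so no division by zero can occur and all quantities appearing below will be smooth functions of their arguments on the relevant range. (I also tacitly assume $\psi\in C^2$, so that $\kappa_F$ is defined and the statement has classical meaning; the hypothesis $E\in C^3$ is used only to guarantee $\kappa_E\in C^1$.)

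Concretely, I would fix $x_0\in\pa E$ and take a unit-speed parametrisation $\gamma$ of a neighbourhood of $x_0$ in $\pa E$, oriented so that $\gamma'=\tau_E\circ\gamma$; writing $T:=\tau_E\circ\gamma$, $N:=\nu_E\circ\gamma$, $k:=\kappa_E\circ\gamma$ and $\hat\psi:=\psi\circ\gamma$, one has $(\pa_\tau\psi)\circ\gamma=\hat\psi'$, $(\pa_\tau^2\psi)\circ\gamma=\hat\psi''$, and — with the paper's conventions ($\tau_E$ the clockwise rotation of $\nu_E$ and $\kappa_E=\Div_\tau\nu_E$) — the Frenet relations $N'=kT$ and $T'=-kN$. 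Then $\pa F$ is parametrised near $\Psi(x_0)$ by $\eta:=\gamma+\hat\psi\,N$, and a first differentiation gives $\eta'=(1+\hat\psi k)T+\hat\psi'N$, whence $|\eta'|=J\circ\gamma$ (in agreement with \eqref{eq:jacobian}) and $\nu_F\circ\eta=\bigl(-\hat\psi' T+(1+\hat\psi k)N\bigr)/(J\circ\gamma)$ (in agreement with \eqref{nu_Fexp}). A second differentiation, the Frenet relations, and the planar curvature formula $\kappa_F\circ\eta=-\det(\eta',\eta'')/|\eta'|^{3}$ — the sign being pinned down by the requirement that $\kappa_F=\kappa_E$ when $\psi\equiv0$ — then give, after simplification and since $(1+\hat\psi k)^{2}+(\hat\psi')^{2}=(J\circ\gamma)^{2}$, the following identity on $\pa E$ (note $\Psi\circ\gamma=\eta$, and $x_0$ was arbitrary):
\begin{equation}\label{eq:sketch.kappaphi}
\kappa_F^\varphi\circ\Psi=g(\nu_F\circ\Psi)\Bigl[-\tfrac{1+\psi\kappa_E}{J^{3}}\,\pa_\tau^2\psi+\tfrac{1}{J}\,\kappa_E+\tfrac{\pa_\tau\psi}{J^{3}}\,\pa_\tau(\psi\kappa_E)\Bigr]\qquad\text{on }\pa E,
\end{equation}
which, for $\psi\equiv0$, correctly collapses to $g(\nu_E)\kappa_E=\kappa_E^\varphi$.

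The remaining step is purely algebraic. I would rewrite \eqref{eq:sketch.kappaphi} by adding and subtracting the linearised contributions $-g(\nu_E)\pa_\tau^2\psi$ and $g(\nu_E)\kappa_E=\kappa_E^\varphi$, which yields $\kappa_F^\varphi\circ\Psi=-g(\nu_E)\pa_\tau^2\psi+\kappa_E^\varphi+R_0$ with $R_0=a\,\pa_\tau^2\psi+b\,\pa_\tau(\psi\kappa_E)+c$ and
\[
a=g(\nu_E)-\tfrac{g(\nu_F\circ\Psi)(1+\psi\kappa_E)}{J^{3}},\qquad b=\tfrac{g(\nu_F\circ\Psi)\,\pa_\tau\psi}{J^{3}},\qquad c=\Bigl(\tfrac{g(\nu_F\circ\Psi)}{J}-g(\nu_E)\Bigr)\kappa_E .
\]
To check that $a$ and $b$ are smooth functions of $(\nu_E,\psi\kappa_E,\pa_\tau\psi)$ alone, I would rewrite \eqref{nu_Fexp} as $\nu_F\circ\Psi=\bigl[(1+\psi\kappa_E)\,\mathrm{Id}-(\pa_\tau\psi)R\bigr]\nu_E/J$, where $R$ is the clockwise $90^\circ$ rotation and $\tau_E=R\nu_E$; together with $J\ge\tfrac{1}{2}$ this shows that $\nu_F\circ\Psi$, $J$ and $1/J$ are smooth in $(\nu_E,\psi\kappa_E,\pa_\tau\psi)$ on $\{|\psi\kappa_E|\le\tfrac{1}{2}\}$. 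Since $\psi\kappa_E$ and $J$ are themselves functions of $\psi,\pa_\tau\psi,\kappa_E$, the coefficient $c$ is a smooth function of $(\nu_E,\psi,\pa_\tau\psi,\kappa_E)$, and after multiplication by a cutoff one may take $a,b,c$ globally smooth. Finally, evaluating at $\psi=0$, $\pa_\tau\psi=0$ forces $J=1$ and $\nu_F\circ\Psi=\nu_E$, so that $a(\cdot,0,0)=b(\cdot,0,0)=c(\cdot,0,0,\cdot)=0$ — precisely the asserted vanishing.

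I expect the main difficulty to be bookkeeping rather than anything conceptual: one has to keep the orientation of $\gamma$, the convention $\tau_E=R\nu_E$, and the sign of $\kappa_E=\Div_\tau\nu_E$ mutually consistent so that the computed curvature $\kappa_F$ agrees both with \eqref{nu_Fexp}--\eqref{eq:jacobian} and with the trivial case $\psi\equiv0$; and one has to verify that $a,b,c$ really depend only on the asserted variables, which becomes transparent once $\nu_F\circ\Psi$, $J$ and $1/J$ are recognised as smooth functions of $(\nu_E,\psi\kappa_E,\pa_\tau\psi)$. (The corresponding isotropic computation is carried out in \cite{FJM2018}.)
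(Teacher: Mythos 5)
Your computation is correct, and it is in fact more than the paper provides: the paper does not prove this lemma at all, but simply cites \cite[Remark 4.8]{Kub} for the anisotropic case and \cite{FJM2018}, \cite[Lemma 2.5]{CFJKsd} for the isotropic one. Your route — a direct Frenet computation in a unit-speed parametrisation of $\pa E$ — is elementary and self-contained, and it yields the closed-form identity \eqref{eq:sketch.kappaphi} together with explicit expressions for $a$, $b$, $c$; I checked that with the paper's conventions ($\tau_E=R\nu_E$ with $R$ the clockwise rotation, $\kappa_E=\Div_\tau\nu_E$) the Frenet relations $N'=kT$, $T'=-kN$ and $\det(T,N)=1$ hold, that $\eta''=[(\hat\psi k)'+k\hat\psi']T+[\hat\psi''-k(1+\hat\psi k)]N$, and that $-\det(\eta',\eta'')/|\eta'|^3$ indeed reproduces your bracket, so the regrouping into $-g(\nu_E)\pa_\tau^2\psi+\kappa_E^\varphi+R_0$ with your $a,b,c$ is exact, the claimed dependencies $(\nu_E,\psi\kappa_E,\pa_\tau\psi)$ resp. $(\nu_E,\psi,\pa_\tau\psi,\kappa_E)$ are genuine (using $J\ge\tfrac12$ and $|\nu_F|=1$), and the vanishing at $(\cdot,0,0)$ holds. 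The citation-based route buys brevity and uniformity with the references' higher-regularity framework; your route buys transparency and explicit constants, which is useful elsewhere in the paper (e.g.\ when estimating $R_0$ in the appendix). One small tightening: pinning the sign of $\kappa_F\circ\eta=-\det(\eta',\eta'')/|\eta'|^{3}$ by the single case $\psi\equiv0$ is a little informal; the clean justification is that your computed $\nu_F\circ\eta$ equals $-R\eta'/|\eta'|$ for every admissible $\psi$ (consistent with $\nu_E=-R\tau_E$), and then $\kappa_F\circ\eta=\frac{1}{|\eta'|^{2}}\frac{d}{ds}(\nu_F\circ\eta)\cdot\eta'=-\det(\eta',\eta'')/|\eta'|^{3}$ follows identically, so the sign is correct for all $\psi$, not just at $\psi\equiv0$. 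Also, as you note, the statement implicitly requires $\psi\in C^{2}$ (the definition of normal graph only asks $C^{1}$), which is how the lemma is actually used in the paper after the regularity step, so this is harmless.
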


In the sequel, we shall assume that $E_0\subset\R^2$ is a bounded open set of class $C^m$, $m\geq 3$. If $1\leq k\leq m$, $ \alpha \in [0,1]$ and $K,\sigma_0 >0$, we set
\begin{align}\label{22082024pom2}
\mathfrak{C}^{k,\alpha}_{K,\sigma_0}(E_0):=
\big\{E\subset\R^2: \,\, E \text{\, is bounded,}\,\,\pa E&=\{y+\varphi_E(y)\nu_{E_0}(y)\!:\,y\in\pa E_0\},\cr
& \|\varphi_E\|_{L^\infty(\pa E_0)}\leq \sigma_0,\,\|\varphi_E\|_{C^{k,\alpha}(\pa E_0)}\leq K\big\}.
\end{align}
The set $\mathfrak{H}^{k}_{K,\sigma_0}(E_0)$ is defined in the same way by replacing $\|\varphi_E\|_{C^{k,\alpha}(\pa E_0)}$ with  $\|\varphi_E\|_{H^k(\pa E_0)}$. Given $\{ E_n \}_{n \in \N},\, E$ in $\mathfrak{C}_{K,\sigma_0}^{k,\alpha}(E_0)$ (respectively in $\mathfrak{H}^{k,\alpha}_{K,\sigma_0}(E_0)$), we say that $E_n \rightarrow E $ in $\mathfrak{C}_{K,\sigma_0}^{k,\alpha}(E_0)$ (respectively in $\mathfrak{H}_{K,\sigma_0}^{k,\alpha}(E_0)$) provided $\varphi_{E_n}\to\varphi_E$ in $L^{\infty}(\pa E_0)$ and in $C^{k,\alpha}(\pa E_0)$ (respectively in $H^{k}(\pa E_0)$).
Interpolation inequalities are
an essential tool to prove the main result of the present paper. Thus, we explicitly recall the famous Gagliardo--Nirenberg theorem (see for instance \cite[Proposition 6.5]{Mantegazza2002} and \cite[Proposition 4.3]{DFM2023}).
\begin{proposition}[Gagliardo--Nirenberg interpolation inequalities]\label{interptensor}
Let $j,m$ be integers such that $0 \leq j <m $ and $0<r,q \leq +\infty$ and let $E\in\mathfrak{C}^{1}_{M,\sigma}(E_0):=\mathfrak{C}^{1,0}_{M,\sigma}(E_0)$ for some $M>0$ and $\sigma>0$. Then, for every covariant tensor $T=T_{i_1 \dots i_l}$ the following ``uniform'' interpolation inequalities hold
\begin{equation}\label{gn1}
 \Vert\nabla^j_{\tau} T\Vert_{L^{p}{(\pa E)}}\leq\,C\,\big(\Vert 
\nabla^m_{\tau} T\Vert_{L^{r}{(\pa E)}}+\Vert T\Vert_{L^{r}{(\pa E)}}\big)^{\theta}\Vert
 T\Vert_{L^q{(\pa E)}}^{1-\theta}\,,
\end{equation}
with the compatibility condition
\begin{equation}\label{comp1}
\frac{1}{p}=j+\theta \Big (\frac{1}{r}-m\Big )+\frac{1-\theta}{q}\,,
\end{equation}
for all $\theta \in[j/m,1)$ for which $p\in[1,+\infty)$ and where $C$ is a constant depending only on $j$, $m$, $p$, $q$, $r$ and $E_0$, $M$. Moreover, if $f:\partial F\to\R$ is a smooth function with $\int_{\partial E}f\,d\Ha^{1}=0$, the inequality \eqref{gn1} becomes
\begin{equation}\label{gn2}
\Vert{\pa^j_{\tau} f}\Vert_{L^p(\partial E)} \leq C\,\Vert{\pa^m_{\tau} f}\Vert_{L^r(\pa E)}^{\theta} \Vert{f}\Vert_{L^q(\pa E)}^{1-\theta}\,.
\end{equation}
Finally, if $f:\partial F\to\R$ is any smooth function, there holds
\begin{equation}\label{gn3}
\Vert{\pa^j_{\tau} f}\Vert_{L^p(\partial E)} \leq C \Vert{\pa^m_{\tau} f}\Vert_{L^r(\pa E)}^{\theta} \Vert\pa_{\tau} f\Vert_{L^q(\pa E)}^{1-\theta}, 
\end{equation}
with the compatibility condition
\begin{equation}\label{comp2}
\frac{1}{p}=j-1+\theta \Big (\frac{1}{r}-m+1\Big )+\frac{1-\theta}{q}\, ,
\end{equation}
for all $\theta \in[j/m,1)$ for which $p\in[1,+\infty)$  and where the constant $C$ is as above.

By density, these inequalities clearly extend to functions and tensors in the appropriate Sobolev spaces.
\end{proposition}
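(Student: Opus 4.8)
The statement is the classical Gagliardo–Nirenberg inequality read off on the $1$–dimensional manifold $\pa E$; the only point beyond the textbook version is the \emph{uniformity} of the constant over the class $\mathfrak{C}^{1}_{M,\sigma}(E_0)$. The plan is therefore to reduce everything to the one–dimensional periodic Gagliardo–Nirenberg inequality and then to bound uniformly the single geometric quantity that enters, namely the length $L:=\Ha^1(\pa E)$.

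First I would parametrize $\pa E$ by arclength, $\gamma\colon \R/(L\Z)\to\pa E$ (handling each connected component of $\pa E$ separately and summing at the end). The algebraic heart of the reduction is that a $1$–dimensional Riemannian manifold is flat: the arclength curve is a geodesic, so $\nabla_\tau\tau_E=0$, and hence for a covariant $l$–tensor $T$ (or, if $T$ is $\R^2$–valued, for each of its scalar components) one has $(\nabla^j_\tau T)(\tau_E,\dots,\tau_E)=\pa_\tau^{\,j}\big(T(\tau_E,\dots,\tau_E)\big)$; setting $u:=T(\tau_E,\dots,\tau_E)\circ\gamma$ on $\R/(L\Z)$, this means $\|\nabla^j_\tau T\|_{L^p(\pa E)}=\|u^{(j)}\|_{L^p(0,L)}$ up to a fixed combinatorial constant, and similarly for $j=m$. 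In this way \eqref{gn1} is reduced to the scalar statement
\[
\|u^{(j)}\|_{L^p(0,L)}\leq C\big(\|u^{(m)}\|_{L^r(0,L)}+\|u\|_{L^r(0,L)}\big)^{\theta}\|u\|_{L^q(0,L)}^{1-\theta}
\]
for $L$–periodic $u$, the exponent relation \eqref{comp1} being precisely the dimension–one Gagliardo–Nirenberg scaling. Inequalities \eqref{gn2} and \eqref{gn3} are the standard refinements: the lower order term $\|u\|_{L^r}$ may be dropped when $\int u=0$, and $\|u\|_{L^q}$ may be replaced by $\|u'\|_{L^q}$ for $j\ge 1$ (legitimate since $u'$ is unchanged by adding a constant to $u$), with the corresponding shift \eqref{comp2} of the compatibility condition. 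All three are the classical periodic one–dimensional Gagliardo–Nirenberg inequalities; they follow, e.g., from the version on $\R$ by a cut–off and extension argument, and a rescaling $[0,1]\to[0,L]$ exhibits the explicit dependence of the constant on $L$. This step is exactly what is contained in the quoted references \cite[Proposition 6.5]{Mantegazza2002} and \cite[Proposition 4.3]{DFM2023}.

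Second, I would upgrade to a \emph{uniform} constant by bounding $L$ above and below by positive constants depending only on $E_0$ and $M$. Writing $\pa E=\{y+\varphi_E(y)\nu_{E_0}(y):y\in\pa E_0\}$ and applying the Jacobian identity \eqref{eq:jacobian} with $E_0$ in place of $E$, the tangential Jacobian of $\Phi(y):=y+\varphi_E(y)\nu_{E_0}(y)$ equals $J_\tau\Phi=\sqrt{(1+\varphi_E\kappa_{E_0})^2+|\pa_\tau\varphi_E|^2}$. The bound $\|\varphi_E\|_{C^1(\pa E_0)}\le M$ gives $J_\tau\Phi\le\sqrt{(1+M\|\kappa_{E_0}\|_{L^\infty(\pa E_0)})^2+M^2}$, while the $L^\infty$–smallness of $\varphi_E$ built into the definition of the class guarantees $1+\varphi_E\kappa_{E_0}\ge c_0(E_0)>0$, hence $J_\tau\Phi\ge c_0(E_0)$. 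Integrating over $\pa E_0$ yields $c(E_0)\,\Ha^1(\pa E_0)\le L\le C(E_0,M)\,\Ha^1(\pa E_0)$; substituting this into the constant from the previous step completes the proof for smooth $T$ and $f$, and the closing density assertion then follows by approximating $T$ (resp.\ $f$) in the relevant Sobolev norm by smooth tensors (resp.\ functions) and passing to the limit.

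The argument involves no serious obstacle. The only point that deserves care is the bookkeeping in the first step — verifying that on the $1$–dimensional manifold $\pa E$ the covariant derivative $\nabla_\tau$ applied to a tensor collapses to ordinary differentiation of its component functions once one reparametrizes by arclength, and that this reparametrization, being an isometry onto $\R/(L\Z)$, neither costs nor requires any regularity of $\pa E$ beyond what is already assumed. Everything else is the classical one–dimensional inequality together with the elementary length estimate.
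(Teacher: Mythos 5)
Your proposal is essentially correct, but note that the paper offers no proof of this proposition at all: it is stated as a known result and deferred to the cited references (\cite[Proposition 6.5]{Mantegazza2002}, \cite[Proposition 4.3]{DFM2023}), whose proofs follow precisely the route you describe — reduction to the one-dimensional periodic Gagliardo--Nirenberg inequality via arclength parametrization (the induced connection on a curve being flat, so covariant derivatives collapse to ordinary derivatives of the single component in the parallel frame), the mean-zero and $\|\pa_\tau f\|_{L^q}$ refinements coming from Poincar\'e/absorption and from applying the mean-zero version to $g=\pa_\tau f$ (which automatically has zero average on a closed curve), and uniformity obtained by controlling the only geometric input, the length of $\pa E$, through the graph representation over $\pa E_0$. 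Two small points deserve care if you were to write this out in full. First, your lower bound $1+\varphi_E\kappa_{E_0}\ge c_0(E_0)>0$ is not literally ``built into'' the definition \eqref{22082024pom2}, which only imposes $\|\varphi_E\|_{L^\infty}\le\sigma_0$ for an arbitrary $\sigma_0$; it holds under the paper's standing convention $\sigma\le\sigma_{E_0}=\tfrac{1}{2\|\kappa_{E_0}\|_{L^\infty}}$ (otherwise the constant must also be allowed to depend on $\sigma$), and such a lower bound on $\Ha^1(\pa E)$ is genuinely needed, since the constant in the inhomogeneous inequality blows up as the length tends to zero. Second, when $\pa E_0$ has several components, summing the componentwise inequalities requires a finite-dimensional norm-equivalence step (H\"older on the finite sum), harmless because the number of components is fixed by $E_0$ but worth stating. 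With these minor clarifications your argument is a complete, self-contained proof matching the standard one the paper relies on.
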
	
If $E \in \mathfrak{C}_{K,\sigma}^{k,\alpha}(E_0)$  for some $k \in \N$, $ \alpha \in [0,1]$ and $K,\sigma>0$, then the classical interpolation inequality in H\"older norms holds, i.e.,
 for $ 0 < \beta < \alpha \leq 1 $ and $0 \leq l \leq m \leq k$ it holds that
\begin{equation}\label{interHOLDER}
    \| f \|_{C^{l,\beta}(\pa E)} \leq C \| f \|_{C^{m,\alpha}(\pa E)}^{\theta} \|   f \|_{C^0(\pa E)}^{1-\theta}, \quad \theta= \frac{l+ \beta}{m+ \alpha},
\end{equation}
where $C$ depends on $K,l,m,\alpha,\beta$. This result follows from the Euclidean case; see for example \cite[Example 1.9]{Lunardi2018}.

\subsection{The \texorpdfstring{$L^2$-``distance''}{L2"distance"}}
To implement a minimizing movement scheme, we first need to introduce a suitable notion of ``distance'' that will allow us to model the $L^2$-gradient flow of the $\varphi$-perimeter.

 Let $E$ be a bounded open set with $C^2$ boundary. For every $x\in I_{\sigma_E}(\pa E)$, the projection of $x$ onto $\partial E$ is well defined:
 \[
 \pi_{\pa E}(x)=x- d_E(x)(\nu_E \circ \pi_{\pa E})(x).
 \] 
Given a set $F$ of finite perimeter, which is sufficiently close to $E$, we define the ``$L^2-$ distance'' between $E$ and $F$ by
\begin{equation}\label{defdistL2}
	d_{L^{2}}(F;E):=\sup_{\| f\|_{L^2(\pa E)}\leq1}\int_{\R^2}f(\pi_{\pa E}(x))(\chi_F(x)-\chi_E(x))\,dx.
\end{equation}

\begin{lemma}\label{lem:l2dist}
    Let $E\subset \R^2$ be a bounded open set of class $C^2$, and let $\sigma \leq \sigma_E$. Let $F\subset \R^2$ be a set of finite perimeter such that $E \Delta F \subset {\rm cl}\big( I_{\sigma}(\pa E) \big)$, and define
    \begin{equation}\label{funzione}
    \xi_{F,E}(x):= \int_{-\sigma}^{\sigma} (\chi_{F}(x+t\nu_E(x))-\chi_E(x+t \nu_E(x))) (1+ t \kappa_E(x))\, d t, \quad x \in \pa E.
    \end{equation}
 Then,
\begin{equation}\label{eq:distance.and.area}
d_{L^2}(F;E)=\| \xi_{F,E}\|_{L^2(\partial E)}\qquad  \text{and } \qquad
|F|-|E|=\int_{\partial E}\xi_{F,E} \, d \mathcal{H}^1.
   \end{equation}
Moreover, if $F$ is a normal graph over $\pa E$ in the sense of Definition~\ref{ngrafico}, then
\begin{equation}\label{L^2distperlafunz}
    d_{L^2}^2(F;E)= \int_{\pa E} \vert \psi(x)+ \frac{\psi^2(x)}{2}\kappa_E(x) \vert^2 d\mathcal{H}^1_x.
        \end{equation}
\end{lemma}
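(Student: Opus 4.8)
The plan is to compute the supremum in \eqref{defdistL2} explicitly using a slicing argument along the normals to $\pa E$, and then to specialize to the normal-graph case.

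\textbf{Step 1: Rewrite the integral over $\R^2$ as an integral over $\pa E$.} Since $E\Delta F\subset{\rm cl}(I_\sigma(\pa E))$ and $\sigma\leq\sigma_E$, on the tube $I_\sigma(\pa E)$ the map $\Phi\colon\pa E\times(-\sigma,\sigma)\to\R^2$, $\Phi(x,t):=x+t\nu_E(x)$, is a bi-Lipschitz diffeomorphism onto its image, with tangential Jacobian $J\Phi(x,t)=|1+t\kappa_E(x)|$; since $\sigma\le\sigma_E=\tfrac12\|\kappa_E\|_{L^\infty}^{-1}$ we have $1+t\kappa_E(x)\ge\tfrac12>0$, so the absolute value is superfluous. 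Moreover $\pi_{\pa E}(\Phi(x,t))=x$ on the tube. Hence for $f\in L^2(\pa E)$, by the coarea/area formula,
\[
\int_{\R^2}f(\pi_{\pa E}(x))(\chi_F(x)-\chi_E(x))\,dx
=\int_{\pa E}f(x)\Big(\int_{-\sigma}^\sigma(\chi_F-\chi_E)(x+t\nu_E(x))(1+t\kappa_E(x))\,dt\Big)\,d\Ha^1_x
=\int_{\pa E}f\,\xi_{F,E}\,d\Ha^1.
\]
(One should check that $\chi_F-\chi_E$ is supported, up to a null set, in $\Phi(\pa E\times(-\sigma,\sigma))$, which is exactly the inclusion hypothesis $E\Delta F\subset{\rm cl}(I_\sigma(\pa E))$; the boundary of the tube is $\Ha^2$-null in $\R^2$, so it does not matter.)

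\textbf{Step 2: Identify the supremum.} From Step 1, $d_{L^2}(F;E)=\sup_{\|f\|_{L^2(\pa E)}\le1}\int_{\pa E}f\,\xi_{F,E}\,d\Ha^1$. First one must argue $\xi_{F,E}\in L^2(\pa E)$: pointwise $|\xi_{F,E}(x)|\le\int_{-\sigma}^\sigma(1+t\kappa_E(x))\,dt\le 2\sigma(1+\sigma\|\kappa_E\|_\infty)$, so $\xi_{F,E}\in L^\infty(\pa E)\subset L^2(\pa E)$ since $\pa E$ is compact. Then the supremum of a linear functional over the unit ball of $L^2$ is the $L^2$-norm of the representing element, by Cauchy--Schwarz and the choice $f=\xi_{F,E}/\|\xi_{F,E}\|_{L^2}$ (or $f=0$ if $\xi_{F,E}=0$). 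This gives $d_{L^2}(F;E)=\|\xi_{F,E}\|_{L^2(\pa E)}$. The volume identity $|F|-|E|=\int_{\pa E}\xi_{F,E}\,d\Ha^1$ follows by taking $f\equiv1$ in the computation of Step 1 (valid since $1\in L^2(\pa E)$), because $\int_{\R^2}(\chi_F-\chi_E)\,dx=|F\setminus E|-|E\setminus F|=|F|-|E|$, again using that $F\Delta E$ lies in the tube.

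\textbf{Step 3: The normal-graph case.} Suppose now $\pa F=\{x+\psi(x)\nu_E(x):x\in\pa E\}$ with $\psi\in C^1(\pa E,[-\sigma,\sigma])$. Fix $x\in\pa E$. Along the normal segment $t\mapsto x+t\nu_E(x)$, the point lies in $F$ iff $t$ is between $0$ and $\psi(x)$ (this uses that $F$ is the region enclosed by $\pa F$ and that the normal line crosses $\pa F$ exactly at $t=\psi(x)$ inside the tube — a consequence of $\|\psi\|_\infty\le\sigma\le\sigma_E$, so that the tube map is a diffeomorphism and $\pa F$ is a graph). Hence $(\chi_F-\chi_E)(x+t\nu_E(x))$ equals $\mathrm{sgn}(\psi(x))$ for $t$ strictly between $0$ and $\psi(x)$, and $0$ otherwise; therefore
\[
\xi_{F,E}(x)=\mathrm{sgn}(\psi(x))\int_{0\wedge\psi(x)}^{0\vee\psi(x)}(1+t\kappa_E(x))\,dt
=\int_0^{\psi(x)}(1+t\kappa_E(x))\,dt=\psi(x)+\frac{\psi(x)^2}{2}\kappa_E(x).
\]
Squaring and integrating over $\pa E$ and invoking the first identity in \eqref{eq:distance.and.area} yields \eqref{L^2distperlafunz}.

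\textbf{Main obstacle.} The only genuinely delicate point is the careful bookkeeping in Steps 1 and 3: namely justifying that the symmetric difference $F\Delta E$ is (up to a Lebesgue-null set) contained in the image of the normal tube map $\Phi$ and that on each normal fiber the set $F$ is exactly the interval cut out by $\pa F$, so that the one-dimensional integral collapses to $\psi+\tfrac12\psi^2\kappa_E$. This requires using that $\sigma\le\sigma_E$ guarantees $\Phi$ is a diffeomorphism with positive Jacobian, that $\pi_{\pa E}$ is well defined and Lipschitz on $I_{\sigma_E}(\pa E)$, and that $F$ is an open set of class $C^1$ whose boundary is the given graph, so that its fiberwise trace is a single subinterval with endpoint $\psi(x)$. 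Everything else is Cauchy--Schwarz and the area formula.
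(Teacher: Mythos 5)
Your proposal is correct and follows essentially the same route as the paper's proof: both rewrite the defining supremum via the normal tube parametrization with Jacobian factor $1+t\kappa_E$ (the paper slices by the coarea formula along level sets of $d_E$ and then applies the area formula for $\Phi_t$, which is the same computation), identify the maximizer $f=\xi_{F,E}/\|\xi_{F,E}\|_{L^2(\pa E)}$, take $f\equiv1$ for the volume identity, and compute $\xi_{F,E}=\psi+\tfrac{\psi^2}{2}\kappa_E$ fiberwise in the normal-graph case. Your extra bookkeeping (positivity of $1+t\kappa_E$, $\xi_{F,E}\in L^2$, the fiberwise description of $F$) only makes explicit what the paper leaves implicit.
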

\begin{proof}
By assumption, we have $\pa F  \subset {\rm cl}\big(I_\sigma(\pa E)\big)$.  For any $t\in(-\sigma,\sigma)$, we define the map $\Phi_t:\pa E\to\{d_E=t\}$ as
\begin{equation}\label{phit}
\Phi_t(y):=y+t\nu_E(y) \qquad \forall y\in\pa E.
\end{equation}
We denote by $J_\tau\Phi_t$ the tangential Jacobian of  $ \Phi_t$. A straightforward computation shows that
\begin{equation}\label{jacphit}
J_\tau \Phi_t(y)=1+t\kappa_E(y)\qquad \forall y\in\pa E  \,\text{ and }\, |t|<\sigma.
\end{equation}
Applying the coarea formula, equation \eqref{jacphit} implies that for any function
 $f\in L^2(\pa E)$, we have:
\begin{align}
\nonumber\int_{\R^2}f(\pi_{\partial E}(x))&(\chi_F(x)-\chi_E(x))\,dx
=\int_{-\sigma}^\sigma dt\int_{\{d_E=t\}}f(\pi_{\partial E}(y))(\chi_F(y)-\chi_E(y))\,d\Ha^{1}_y\\
&\label{eq:defn.dist.graph} =\int_{-\sigma}^\sigma dt\int_{\pa E}f(y)(\chi_F(y+t\nu_E(y))-\chi_E(y+t\nu_E(y)))J_\tau \Phi_t(y)\,d\Ha^{1}_y \\
&\nonumber=\int_{\pa E}f(y)\int_{-\sigma}^\sigma(\chi_F(y+t\nu_E(y))-\chi_E(y+t\nu_E(y)))J_\tau \Phi_t(y)\,dt \,d\Ha^{1}_y.
\end{align} 
Therefore, the function $f \in L^2(\pa E)$ that realizes the sup in the definition of $d_{L^2}(F;E)$, when $\xi_{F,E} \neq 0$, is
$$ f= \frac{ \xi_{F,E}}{\| \xi_{F,E} \|_{L^2(\pa E)}}.$$ 
Therefore, the first identity in \eqref{eq:distance.and.area} follows directly from \eqref{eq:defn.dist.graph}, the definition of the function $\xi_{F,E}$ in \eqref{funzione},
and the definition of $ d_{L^2}(F;E)$. 
By choosing $f=1$ in \eqref{eq:defn.dist.graph} we  obtain the second identity in \eqref{eq:distance.and.area}, since in this case \eqref{eq:defn.dist.graph} becomes
\[
\vert F \vert- \vert E \vert= \int_{-\sigma}^\sigma \,dt\int_{\{d_E=t\}}(\chi_F(y+t\nu_E(y))-\chi_E(y+t\nu_E(y))) J_\tau \Phi_t(y)\, d\Ha^{1}_y.
\]
If $ F$ is a normal graph over $\partial E$, then from \eqref{funzione} we obtain:
\begin{equation}\label{funzg_E(x)}
    \xi_{F,E}= \int_{0}^{\psi} 1+ \kappa_E t\, d t= \psi+ \frac{\psi^2}{2}\kappa_E.
\end{equation}
 Hence, combining \eqref{eq:distance.and.area} with \eqref{funzg_E(x)}, we get \eqref{L^2distperlafunz}.
\end{proof}

\subsection{Almost minimizers of the \texorpdfstring{$\varphi$}{phi}-perimeter}
We now recall the definition and some regularity properties of almost minimizers of the $\varphi$-perimeter.
\begin{definition}
    Let $E \subset \R^2$ be a set of finite perimeter. We say that $E$ is an $ (\omega_0,r_0,\beta)$-almost minimizer of the $\varphi$-perimeter if there exist $\omega_0>0,\, r_0>0, \beta >0$ such that for every  $x \in \R^2$, the following holds:
    \begin{equation}
        P_{\varphi}(E) \leq P_{\varphi}(G)+ \omega_0 r^{1+ \beta}\quad\text{ for all } G \text{ such that } E \Delta G \Subset  B_r(x) \text{ with } r \leq r_0.
    \end{equation}
\end{definition}
It is known that if $ E\subset \R^2$ is an $(\omega_0,r_0,\beta)$-almost minimizer of the $\varphi$-perimeter, then $\pa E$ is of class $C^{1,\eta}$ for every $ \eta \in [0 , \frac{\beta}{2})$; see  \cite{AlmgrenSimonSchoen1977}, \cite{Bombieri1982} and \cite{DePMa2015}. For the case where $\varphi$ is the Euclidean norm, see \cite[Theorem~1.9]{Tamanini1984}). 

We will use the following lemma. The proof is similar to those in \cite[Lemma~2.8]{CFJKsd} and \cite[Lemma~4.3]{Kub}, but we include it here for the reader’s convenience. 
\begin{lemma}\label{propdadim}
Let $K>0$ and $\sigma>0$, let $E \in \mathfrak{C}^{2,\alpha}_{K,\sigma}(E_0)$ with $\alpha > 0$, and let $F$ be an $(\omega_0, r_0, \beta)$-minimizer of the $\varphi$-perimeter for some $\beta \in (0,\tfrac{1}{2}]$. Given any $\gamma < \min\{\alpha, \tfrac{\beta}{2}\}$, there exists a constant $\delta_0 = \delta_0(K, \omega_0, \gamma) > 0$ such that, if
$$
F \Delta E \subset \mathrm{cl}(I_{\delta_0}(\pa E)),
$$
then there exists a function $\psi \in C^{1,\gamma}(\partial E)$ such that
\begin{equation}
   \pa F = \{ x + \psi(x)\nu_E(x) : x \in \pa E\}. 
\end{equation}
Moreover, for every $\eps>0$, the quantity
$\delta_0$ can be chosen also so that $\|\psi\|_{C^{1,\gamma}(\pa E)} \leq \eps$.
\end{lemma}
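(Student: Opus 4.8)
The plan is to combine the $C^{1,\eta}$ regularity theory for almost minimizers of the $\varphi$-perimeter with a compactness/contradiction argument along the lines of \cite[Lemma~2.8]{CFJKsd}. First I would recall that, since $F$ is an $(\omega_0,r_0,\beta)$-almost minimizer, its boundary $\pa F$ is of class $C^{1,\eta}$ for every $\eta<\beta/2$, with the $C^{1,\eta}$ seminorm controlled quantitatively by $\omega_0$, $\beta$ and the density of the perimeter in small balls (this uniform control is exactly what the references \cite{AlmgrenSimonSchoen1977,Bombieri1982,DePMa2015} provide). In particular, under the hypothesis $F\Delta E\subset\mathrm{cl}(I_{\delta_0}(\pa E))$ with $\delta_0\leq\sigma_E$, the portion of $\pa F$ lying over a fixed coordinate patch of $\pa E$ is a small-Lipschitz, indeed small-$C^{1,\eta}$, graph in the direction $\nu_E$; the smallness of the Lipschitz constant forces $\pa F$ to be expressible globally as a normal graph $\pa F=\{x+\psi(x)\nu_E(x):x\in\pa E\}$ with $\psi\in C^{1,\eta}$, because a graph with slope $<1$ over a $C^2$ reference boundary intersects each normal fibre exactly once inside the tubular neighbourhood.

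The quantitative statements — that $\gamma<\min\{\alpha,\beta/2\}$ can be achieved and that $\|\psi\|_{C^{1,\gamma}(\pa E)}\le\eps$ for $\delta_0$ small enough — I would obtain by a contradiction argument. Suppose not: there is $\eps_0>0$ and a sequence $F_n$ of $(\omega_0,r_0,\beta)$-almost minimizers with $F_n\Delta E\subset\mathrm{cl}(I_{1/n}(\pa E))$ but such that either $\pa F_n$ is not a normal graph over $\pa E$, or the corresponding $\psi_n$ satisfies $\|\psi_n\|_{C^{1,\gamma}(\pa E)}>\eps_0$. The uniform almost-minimality gives a uniform bound on $P_\varphi(F_n)$ and, via the $C^{1,\eta}$ theory with $\eta\in(\gamma,\beta/2)$, a uniform bound on $\|\psi_n\|_{C^{1,\eta}(\pa E)}$ for all large $n$ (the $\psi_n$ exist by the first paragraph once $1/n$ is small). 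By Arzelà–Ascoli, $\psi_n\to\psi_\infty$ in $C^{1,\gamma}(\pa E)$ up to a subsequence, and since $F_n\Delta E$ shrinks to $\pa E$ we get $\psi_\infty\equiv0$; but then $\|\psi_n\|_{C^{1,\gamma}(\pa E)}\to0$, contradicting $\|\psi_n\|_{C^{1,\gamma}}>\eps_0$. The same compactness rules out the "not a normal graph" alternative for $n$ large.

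The main obstacle is making the $C^{1,\eta}$ estimate genuinely \emph{uniform} and \emph{local-to-global}: the regularity theory for almost minimizers is typically stated in a single ball with constants depending on the perimeter density there, so I need to check that, when $F\Delta E$ lies in a thin tube around a fixed compact $C^2$ surface $\pa E$, one can cover $\pa E$ by finitely many balls in which the density ratio $P_\varphi(F;B_r(x))/r$ is controlled uniformly in $F$ — this follows from the density upper bound for almost minimizers together with the fact that $\pa F$ is trapped in the tube, so that on each patch it is close to the graph of the $C^2$ function describing $\pa E$. A secondary technical point is the passage from "$\pa F$ is a $C^{1,\eta}$ graph in the $\nu_{E_0}$-patch coordinates" to "$\pa F$ is a normal graph over $\pa E$ in the $\nu_E$ direction", which is a routine implicit-function/change-of-graph argument once the slope is small, and the control $\|\psi\|_{C^{1,\gamma}}\le\eps$ then propagates through this change of variables since the transition maps are $C^2$-close to the identity on the tube.
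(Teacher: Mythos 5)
Your plan has a genuine gap at its very first step, and it is exactly the step that the paper's proof is designed to supply. You assert that, because $F$ is an $(\omega_0,r_0,\beta)$-minimizer with uniform $C^{1,\eta}$ regularity and $F\Delta E$ is trapped in a thin tube, the portion of $\pa F$ over a patch of $\pa E$ is ``in particular'' a small-slope $C^{1,\eta}$ graph in the direction $\nu_E$. This does not follow from what you quote. The $\eps$-regularity theory of \cite{AlmgrenSimonSchoen1977,Bombieri1982,DePMa2015,Tamanini1984} gives a $C^{1,\eta}$ graph representation \emph{in a ball where the excess (tilt/flatness) is below a threshold}, not merely where the density ratio $P_\varphi(F;B_r(x))/r$ is bounded — so the uniformity issue you flag as the ``main obstacle'' is aimed at the wrong quantity, and bounded density alone will not let you invoke the regularity theorem in the $\nu_E$-direction. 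The heart of the lemma is precisely to show that $\nu_F(y)$ is close to $\nu_E(x)$ for $y\in\pa F$ near $x\in\pa E$ (equivalently, that the excess of $F$ relative to $\pa E$ is small once $\delta_0$ is small); in the paper this is the claim \eqref{dist_norm}, proved by a compactness/contradiction argument on sequences $E_n\in\mathfrak C^{2,\alpha}_{K,\sigma}(E_0)$ and $(\omega_0,r_0,\beta)$-minimizers $F_n$, using Hausdorff convergence of almost minimizers and convergence of their normals (\cite{Bombieri1982}); only after that does one apply the ``standard regularity argument'' (as in \cite[Lemma 4.3]{Kub}) to produce $\psi$ and make $\|\psi\|_{C^{1,\gamma}(\pa E)}\le\eps$. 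Your proposal never proves this closeness of normals; it is smuggled in by the phrase ``in particular''.

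The second-paragraph compactness argument does not repair this, because it is circular: to apply Arzel\`a--Ascoli to the height functions $\psi_n$ you must already know that each $\pa F_n$ is a normal graph over $\pa E$ with uniform $C^{1,\eta}$ bounds, which is the unproved first step; and the alternative ``$\pa F_n$ is not a normal graph'' cannot be excluded by compactness of height functions that do not exist in that case — one must compactify the sets and their normals (as the paper does), not the graphs. If you want to avoid the paper's set-level compactness, the standard substitute is a quantitative ``height bound implies small excess'' lemma for almost minimizers (trapping $\pa F$ in a slab of width $\delta_0\ll r$ forces the density ratio close to $1$ and then the tilt to be small, after first excluding multiple sheets by explicit comparison competitors); but that chain of estimates is the nontrivial content and would have to be carried out, not cited.
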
 
\begin{proof}
By assumption, $E \Delta F \subset \mathrm{cl} (I_{\delta_0}(\pa E))$, we see that for every $x \in \pa E$ and every $\eta>0$, we have $ B_{\delta_0+\eta}(x) \cap \pa F \neq \emptyset$.  Fix $\varepsilon>0$. We claim that for all $\delta_0 \in (0,\frac{\varepsilon}{100 K})$, if $E$ and $F $ satisfy the assumption above, then
\begin{equation}\label{dist_norm}
    \vert \nu_E(x)-\nu_F(y) \vert \leq \varepsilon \quad \text{for all } y \in \pa F \cap \mathrm{cl}(B_{\delta_0}(x)). 
\end{equation}
We argue by contradiction. Suppose the claim is false. Then there exist $\varepsilon>0$ and sequences $\{E_n\}_{n \in \N},\, \{F_n\}_{n \in \N}  $ such that: $ E_n \in \mathfrak{C}_{K,\sigma}^{2,\alpha}(E_0)$, $F_n$ is an $(\omega_0,r_0,\beta)$-minimizer of the $ \varphi$-perimeter, $ E_n \Delta F_n \subset \mathrm{cl}(I_{\delta_0}(\pa E))$,  there exist $x_n \in \pa E_n$ and $ y_n \in B(x_n,\frac{1}{n}) \cap \pa F_n$ such that 
\begin{equation}\label{contradict}
    \vert \nu_{E_n}(x_n)- \nu_{F_n}(y_n) \vert \geq \varepsilon \text{ for all } n \in \N.
\end{equation}
Without loss of generality, up to extracting a subsequence, we may assume that $x_n,\,y_n \rightarrow x$ as $ n \rightarrow + \infty$, and that $  E_n \rightarrow  E  $ in $\mathfrak{C}^{2,\alpha'}_{K,\sigma}(E_0)$ for every $ \alpha' < \alpha$, while $ F_n \rightarrow F$ in the Hausdorff distance, where $F$ is an $(\omega_0,r_0,\beta)$-minimizer of the $ \varphi$-perimeter. As a result, $ \nu_{E_n}(x_n) \rightarrow \nu_E(x)$ as $n \rightarrow + \infty $. Moreover, using the $ (\omega_0,r_0,\beta)$-minimality of $F_n$, we also have $\nu_{F_n}(y_n) \rightarrow \nu_E(x)$, see \cite{Bombieri1982}, which contradicts \eqref{contradict}. The conclusion of the lemma then follows from \eqref{dist_norm} and a standard regularity argument; see, for instance, \cite[Lemma 4.3]{Kub}.
\end{proof}

\section{First properties of the scheme} \label{sec:esistenza}
\subsection{Setting of the problem}\label{canebernerdabolzano}
In this section, we introduce a variational algorithm to model the volume-preserving anisotropic mean curvature flow.  \\
Fix $h\in(0,1)$ to be a time step discretization. Let $E,F\Subset  \R^2$ be sets of finite perimeter, with $F$ sufficiently close to $E$ in the Hausdorff sense. We consider the energy functional
\begin{equation}\label{pom102092024}
\mathcal{F}_h(F,E):=P_{\varphi}(F)+\frac{1}{2h}d_{L^2}^2(F;E),
\end{equation}
where $P_\varphi$ denotes the $\varphi-$perimeter as defined in \eqref{eq:defn.phi.per}, and $d_{L^2}^2(F;E)$ is defined in \eqref{defdistL2}.
Let $\delta>0$, we define the admissible class
\begin{equation}\label{eq:class.minimizing}
  \mathcal {B}_\delta(E):=   \left\{ F\subset \R^2:\,\, P_\varphi(F)<\infty, \,\,  F \Delta E \subset {\rm cl}( I_{\delta}(\pa E )) \right\}.
\end{equation}
It is straightforward to verify that the functional
\[
\mathcal {B}_\delta(E)\ni F \longmapsto \mathcal {F}_{h}(F,E)
\]
admits a minimizer.
This basic observation allows us to define the
{\it approximate constrained flat flow}, which forms the core of the present work.
\begin{definition}[Approximate constrained flat flow] \label{12092023def1}
		Let $\delta>0$ and $h>0$ be fixed positive numbers. Let $E_0 \Subset   \mathbb{R}^2$ be a set of class $C^4$ such that $\vert E_0 \vert=1$.
        
        Define the sequence of sets $\{ E_{kh}^{h,\delta}\}_{k \in \mathbb{N}}$ iteratively by setting $E_0^{h,\delta}=E_0$, and for $k \geq 1$,
		$$   E_{k h}^{h,\delta} \in \arg \min \left\{\mathcal{F}_{h}(E,\, E_{(k-1)h}^{h,\delta}) \colon \, E \in \mathcal{B}_\delta( E_{(k-1)h}^{h,\delta} ),\, \vert E \vert=1\right\},$$
        where $\mathcal{F}_h$ is defined in \eqref{pom102092024}.
		We define the piecewise constant interpolation in time by 
        \begin{equation}\label{agligderigligb}
            E_{t}^{h,\delta}:= E_{k h}^{h,\delta} \quad \text{for any } t \in [k h,\, (k+1)h).
        \end{equation}
We refer to the family $\{E_t^{h,\delta}\}_{t \geq 0}$ as an approximate constrained flat flow with initial datum $E_0$ and time step $h$. 
\end{definition}
   
The starting point of our analysis is the following lemma, which provides an estimate of the $\varphi$-perimeter of a set  $F$ over $\pa E$ in terms of the height function and the $L^2-$distance. 
\begin{lemma}\label{Lemma1006} Let $K>0$, $\sigma>0$ and $E\in\mathfrak{C}^{2,\alpha}_{K, \sigma}(E_0)$ for some $ \alpha \in (0,\frac{1}{2}]$, and assume that $ \vert E \vert=1$. Then there exist constants $\Lambda$ and $ \delta_1$, depending only on $K$, such that the following holds: if $F \in \mathcal{B}_{\delta_1}(E)$ is such that $$ \pa F= \{ x +\psi(x) \nu_{E}(x) : x \in \pa E\}$$ with $ \psi \in C^{1}(\pa E)$ and $ \| \psi \|_{C^{1}(\pa E)} \leq  \delta_1$,  then
\begin{equation}\label{lemmatesi1}
   \frac{J_\varphi}{2} \|\nabla_{\pa E} \psi \|_{L^2(\pa E)}^2+P_{\varphi}(E) \leq P_{\varphi}(F)+ \Lambda d_{L^2}(F;E),
\end{equation}
where $J_{\varphi}$ is defined in \eqref{remform}.
\end{lemma}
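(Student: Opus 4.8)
The plan is to expand $P_\varphi(F)$ via the normal-graph formula \eqref{per01102024} from Lemma \ref{lem:norm.perimeter}, perform a second-order Taylor expansion of the integrand in the height function $\psi$ and its tangential gradient, and then absorb all lower-order error terms into the $L^2$-distance using the identity \eqref{L^2distperlafunz}, which tells us that $d_{L^2}(F;E)=\|\psi+\tfrac{\psi^2}{2}\kappa_E\|_{L^2(\pa E)}$. The leading quadratic term in $\nabla_{\pa E}\psi$ will come from the ellipticity \eqref{remform} of $\varphi$, and since $\|\psi\|_{C^1(\pa E)}\le\delta_1$ is small, the relevant normals $-\nabla_{\pa E}\psi(x)+(1+\psi\kappa_E)\nu_E(x)$ stay in the neighborhood $\mathrm{cl}(I_{1/4}(\mathcal S^1))$ after normalization, so the constant $J_\varphi$ is available.

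Concretely, write $w(x):=-\nabla_{\pa E}\psi(x)+(1+\psi(x)\kappa_E(x))\nu_E(x)$, so that $P_\varphi(F)=\int_{\pa E}\varphi(w)\,d\Ha^1$. Decompose $w = \nu_E + \big(\psi\kappa_E\,\nu_E - \nabla_{\pa E}\psi\big) = \nu_E + v$, where $v\perp$ has a normal component $\psi\kappa_E$ and a tangential component $-\partial_\tau\psi\,\tau_E$, and $|v|\le C(K)\delta_1$ is small. Taylor-expanding $\varphi$ at $\nu_E$ to second order,
\[
\varphi(w)=\varphi(\nu_E)+\nabla\varphi(\nu_E)\cdot v+\tfrac12\nabla^2\varphi(\nu_E)v\cdot v+O(|v|^3).
\]
For the first-order term, $\nabla\varphi(\nu_E)\cdot v=\psi\kappa_E\,\nabla\varphi(\nu_E)\cdot\nu_E-\partial_\tau\psi\,\nabla\varphi(\nu_E)\cdot\tau_E=\psi\,\varphi(\nu_E)\kappa_E-\partial_\tau\psi\,(\nabla\varphi(\nu_E)\cdot\tau_E)$, using $1$-homogeneity ($\nabla\varphi(\nu)\cdot\nu=\varphi(\nu)$). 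Integrating $\int_{\pa E}\psi\,\varphi(\nu_E)\kappa_E\,d\Ha^1$ and combining it with the second-order term $\tfrac12\int_{\pa E}(\psi\kappa_E)^2\nabla^2\varphi(\nu_E)\nu_E\cdot\nu_E$ should produce, up to $L^2$-controllable errors, a quantity of the form $\int_{\pa E}(\text{smooth fn})\cdot(\psi+\tfrac{\psi^2}{2}\kappa_E)\,d\Ha^1$ plus the anisotropic Gauss–Bonnet contribution; the $\int_{\pa E}\partial_\tau\psi\,(\nabla\varphi(\nu_E)\cdot\tau_E)\,d\Ha^1$ term is handled by integration by parts on the closed curve $\pa E$, moving the derivative onto the smooth coefficient and leaving a term linear in $\psi$. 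All such terms are bounded by $\Lambda\,\|\psi+\tfrac{\psi^2}{2}\kappa_E\|_{L^2(\pa E)}=\Lambda\,d_{L^2}(F;E)$ after Cauchy–Schwarz (here one uses that $\psi$ and $\psi+\tfrac{\psi^2}{2}\kappa_E$ have comparable $L^2$-norms when $\|\psi\|_\infty\le\delta_1$ is small relative to $\|\kappa_E\|_\infty^{-1}\le C(K)$). Meanwhile, for the pure tangential part of the second-order term, $\tfrac12\nabla^2\varphi(\nu_E)v\cdot v\ge \tfrac12\nabla^2\varphi(\nu_E)(\partial_\tau\psi\,\tau_E)\cdot(\partial_\tau\psi\,\tau_E)-C|\psi\kappa_E||\partial_\tau\psi|\ge \tfrac{J_\varphi}{2}|\partial_\tau\psi|^2 - C(K)|\psi||\partial_\tau\psi|$ by \eqref{remform} applied with $\nu=\nu_E\perp\tau_E$; the cross term $C(K)|\psi||\partial_\tau\psi|$ and the cubic remainder $O(|v|^3)\le C(K)\delta_1|\nabla_{\pa E}\psi|^2+C(K)|\psi|^2$ can be absorbed, the former into $\tfrac{J_\varphi}{4}|\partial_\tau\psi|^2 + C|\psi|^2$ by Young's inequality and the latter (the $\delta_1|\nabla\psi|^2$ piece) into the gradient term by choosing $\delta_1$ small, while the $C(K)|\psi|^2$ piece is again $\le\Lambda d_{L^2}(F;E)^2\le\Lambda' d_{L^2}(F;E)$ if we additionally note $d_{L^2}(F;E)\le C(K)\delta_1$ so squares are dominated by first powers. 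Collecting everything and using that $P_\varphi(E)=\int_{\pa E}\varphi(\nu_E)\,d\Ha^1$ yields \eqref{lemmatesi1}, with $\Lambda$ and $\delta_1$ depending only on $K$.

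The main obstacle is bookkeeping the first-order-in-$\psi$ terms so that they genuinely reconstitute (a multiple of) $\psi+\tfrac{\psi^2}{2}\kappa_E$ rather than just $\psi$: one must be careful that the difference between $\int_{\pa E}(\text{coeff})\,\psi$ and $\int_{\pa E}(\text{coeff})(\psi+\tfrac{\psi^2}{2}\kappa_E)$ is itself an $O(\|\psi\|_\infty)\|\psi\|_{L^2}$ term, hence $\le\Lambda d_{L^2}(F;E)$ — this is where the smallness assumption $\|\psi\|_{C^1}\le\delta_1$ and the $C^2$-regularity of $E$ (to control $\kappa_E$ and $\partial_\tau(\nabla\varphi(\nu_E)\cdot\tau_E)$ uniformly by $C(K)$) are essential. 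The anisotropic Gauss–Bonnet Lemma \ref{lGaussB} is not strictly needed for the inequality but clarifies why the purely geometric leading terms cancel against $P_\varphi(E)$; alternatively one simply keeps $\varphi(\nu_E)$ inside the integral throughout and never separates it out.
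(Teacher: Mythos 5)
Your proposal is correct and follows essentially the same route as the paper: expand $P_\varphi(F)$ through \eqref{per01102024} around $\nu_E$, extract the gradient term from the ellipticity \eqref{remform}, integrate the term linear in $\nabla_{\pa E}\psi$ by parts on the closed curve so it becomes linear in $\psi$, and bound all linear terms by $C(K)\|\psi\|_{L^2(\pa E)}\leq C(K)\,d_{L^2}(F;E)$ via Cauchy--Schwarz and the comparability \eqref{fomr07112024} (the paper likewise never needs to reconstitute $\psi+\tfrac{\psi^2}{2}\kappa_E$ exactly). One quantitative caveat: absorbing the normal--tangential cross term of $\tfrac12\nabla^2\varphi(\nu_E)v\cdot v$ by Young's inequality, as you propose, degrades the gradient coefficient to roughly $J_\varphi/4$, short of the stated $J_\varphi/2$; to recover the stated constant, observe that this cross term vanishes identically, since $\nabla^2\varphi(\nu_E)\nu_E=0$ by the $1$-homogeneity of $\varphi$, which is in effect what the paper secures by combining the $0$-homogeneity of $\nabla\varphi$ with strong convexity used only in the tangential direction (its two-step convexity estimate \eqref{oggi27feb}).
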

\begin{proof} 
    We observe that 
    \begin{equation}
    \begin{split}
    \nabla_{\pa E} ( \nabla_{\pa E} \varphi (\nu_E))=\nabla^2_{\pa E} \psi (\nu_E) \nabla_{\pa E} \nu_E=\kappa_E  \nabla^2_{\pa E} \varphi (\nu_E)  \tau_E \otimes \tau_E
    \end{split}
    \end{equation}
    and consequently, \begin{equation}\label{26022025f1}
    \begin{split}
        \Div_{\pa E} ( \nabla_{\pa E} \varphi (\nu_E) )= \mathrm{Tr} \nabla_{\pa E} ( \nabla_{\pa E} \varphi (\nu_E)) 
        =\kappa_E \nabla_{\pa E}^2 \varphi(\nu_E )\tau_E\cdot \tau_E.
        \end{split}
    \end{equation}
    By using the convexity of $ \varphi$ and formula \eqref{remform}, for $ \delta_1$ sufficiently small, we obtain:
    \begin{equation}\label{oggi27feb}
    \begin{split}
\varphi((1+\psi\kappa_E)\nu_E-\nabla_{\pa E} \psi ) 
&\geq \varphi(\nu_E+\psi\kappa_E\nu_E)- \nabla \varphi( (1+\psi\kappa_E)\nu_E) \cdot \nabla_{\pa E} \psi  + \frac{J_{\varphi}}{2} \vert  \nabla_{\pa E} \psi \vert^2\\
& =  \varphi(\nu_E+\psi\kappa_E\nu_E)- \nabla_{\pa E} \varphi(\nu_E) \cdot \nabla_{\pa E} \psi + \frac{J_{\varphi}}{2} \vert  \nabla_{\pa E} \psi \vert^2                         \\
& \geq \varphi(\nu_E)+ \psi\kappa_E \nabla \varphi(\nu_E)\cdot \nu_E - \nabla_{\pa E} \varphi( \nu_E) \cdot \nabla_{\pa E} \psi  + \frac{J_{\varphi}}{2} \vert  \nabla_{\pa E} \psi \vert^2\\
& = \varphi(\nu_E)+ \psi \kappa_E \varphi(\nu_E)- \nabla_{\pa E} \varphi( \nu_E) \cdot \nabla_{\pa E} \psi+ \frac{J_{\varphi}}{2} \vert  \nabla_{\pa E} \psi \vert^2
\end{split}
    \end{equation}
   where in the last equality we used the identity $ \nabla \varphi (x) \cdot x = \varphi (x)$.
    Combining formulas \eqref{per01102024}, \eqref{oggi27feb}, and choosing $\delta_1$ small enough, we get:
    \begin{equation}\label{oggi27feb2}
    \begin{split}
    P_{\varphi}(F)&=\int_{\pa E}\varphi(-\nabla_{\pa E} \psi+(1+\psi \kappa_E)\nu_E)\, d \mathcal{H}^1 \\
    &\geq \int_{\pa E} \varphi(\nu_E)+ \psi \kappa_E \varphi(\nu_E)- \nabla_{\pa E} \varphi(\nu_E) \cdot \nabla_{\pa E} \psi\, d \mathcal{H}^1 + \frac{J_{\varphi}}{2} \|\nabla_{\pa E} \psi \|^2_{L^2(\pa E)}\\
    & \geq P_{\varphi}(E)- C(K,\varphi) \| \psi \|_{L^2(\pa E)}+ \frac{J_{\varphi}}{2} \| \nabla_{\pa E} \psi \|^2_{L^2(\pa E)}- \int_{\pa E} \nabla_{\pa E} \varphi(\nu_E) \cdot \nabla_{\pa E} \psi \, d \mathcal{H}^1.
    \end{split}
    \end{equation}
  Applying the divergence theorem, formula \eqref{26022025f1}, and the Cauchy–Schwarz inequality, we estimate the last term of the above equation:
    \begin{equation}
        \begin{split}
            \int_{\pa E} \nabla_{\pa E} \varphi(\nu_E) \cdot \nabla_{\pa E} \psi\, d \mathcal{H}^1  
           & =\int_{\pa E} \Div_{\pa E}(\nabla_{\pa E} \varphi (\nu_E) ) \psi \, d \mathcal{H}^1   \\
            &\leq \| \Div_{\pa E}(\nabla_{\pa E} \varphi (\nu_E) ) \|_{L^2(\pa E)} \| \psi\|_{L^2(\pa E)} \leq C(K) \| \psi \|_{L^2(\pa E)} .
        \end{split}
    \end{equation}
   Substituting this bound into \eqref{oggi27feb2}, we obtain:
    \begin{equation}\label{glablinz}
        P_{\varphi}(F) \geq P_\varphi (E)- C(K,\varphi) \| \psi \|_{L^2(\pa E)}+ \frac{J_{\varphi}}{2} \| \nabla_{\pa E} \psi \|^2_{L^2(\pa E)}.
    \end{equation}
    Moreover, for $\delta_1$ small enough, we have the inequality: \begin{equation}\label{fomr07112024}
      \frac{\psi^2}{2}\leq \big(\psi+ \frac{\psi^2}{2}\kappa_E\big)^2 \leq 2 \psi^2.
    \end{equation} 
    Recalling that $$d_{L^2}^2(F;E)= \int_{\pa E} \vert \psi + \frac{\psi^2}{2}\kappa_E \vert^2 \, d \mathcal{H}^1,$$ and using \eqref{fomr07112024} together with \eqref{glablinz}, we finally deduce:  \begin{equation}\label{perdiseq}
        P_\varphi(F)  \geq P_\varphi(E) - C(K) d_{L^2}(F;E)+ \frac{J_{\varphi}}{2} \|  \nabla_{\tau} \psi \|_{L^2(\pa E)}^2 .
    \end{equation}
\end{proof}
Let $E \subset \R^2$ and $\sigma>0$. For every $t \in (-\sigma,\sigma)$,
we define \begin{equation}\label{110625form1}
E_t:= \{ x \in \R^2 \colon \, d_{E}(x) \leq t \}.
\end{equation}

In the next lemma, we establish a minimality property of the set $E$. Roughly speaking, we show that if $d_{L^2}$ in the definition of $\mathcal F_h$ is not squared, the penalization becomes too strong, rendering the minimization problem trivial.
\begin{lemma}   \label{lem:quasi.min0}
Let $K>0 $, $\sigma>0$ and $E \in \mathfrak{C}_{K,\sigma}^{2,\alpha}(E_0)$ for some $\alpha \in (0,\frac{1}{2}]$ and such that $ \vert E \vert=1$. Let $\mathcal{B}_\delta(E)$ be the class defined in \eqref{eq:class.minimizing}, and define the functional $\mathcal{J}: \mathcal{B}_\delta(E) \rightarrow \R$ by
    \begin{equation}
        \mathcal{J}(A):= P_{\varphi}(A)+ \Lambda' d_{L^2}(A;E)+ \vert \vert A \vert-1 \vert.
    \end{equation}
Then there exist constants $\delta_2= \delta_2(\Lambda,K)$ and $ \Lambda'=\Lambda'(K)$ such that
\[
\{E\}= \underset{A \in \mathcal B_{\delta_2}(E) }{\rm argmin \,} \mathcal{J}(A).
\]
\end{lemma}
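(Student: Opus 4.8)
The idea is to show that any competitor $A \in \mathcal{B}_{\delta_2}(E)$ has $\mathcal{J}(A) \geq \mathcal{J}(E) = P_\varphi(E)$, with equality only when $A = E$. Since the functional $\mathcal{J}$ contains the term $|\,|A| - 1\,|$ and we know $|E| = 1$, we do not have a volume constraint here, so $A$ may have a slightly different volume; this is the feature the lemma exploits. First I would observe that a minimizer of $\mathcal{J}$ over $\mathcal{B}_{\delta_2}(E)$ exists (by the usual lower semicontinuity/compactness argument, exactly as remarked after \eqref{eq:class.minimizing}), and that such a minimizer $A$ is an $(\omega_0, r_0, \beta)$-almost minimizer of the $\varphi$-perimeter with $\beta$ close to $1$: indeed, the perturbation terms $\Lambda' d_{L^2}(\cdot; E) + |\,|\cdot| - 1\,|$ are controlled by $C r$ under a modification inside $B_r(x)$ (the $L^2$-distance and the volume both change by $O(r^2)$, in fact, but certainly by $O(r)$), so $A$ satisfies the almost-minimality inequality with exponent $\beta = 1$.

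Next I would apply Lemma \ref{propdadim}: choosing $\delta_2$ small enough (depending on $K$, and on the almost-minimality constants, which are themselves controlled by $K$ and $\Lambda$), the minimizer $A$ is a normal graph over $\pa E$ with height function $\psi \in C^{1,\gamma}(\pa E)$ and $\|\psi\|_{C^{1,\gamma}(\pa E)}$ as small as we wish — in particular $\leq \delta_1$, so that Lemma \ref{Lemma1006} applies. By Lemma \ref{Lemma1006},
\begin{equation}
    P_\varphi(E) + \frac{J_\varphi}{2}\|\nabla_{\pa E}\psi\|_{L^2(\pa E)}^2 \leq P_\varphi(A) + \Lambda\, d_{L^2}(A;E).
\end{equation}
On the other hand, the second identity in \eqref{eq:distance.and.area} together with $|E| = 1$ gives $|A| - 1 = \int_{\pa E} \xi_{A,E}\, d\mathcal{H}^1 = \int_{\pa E}(\psi + \tfrac{\psi^2}{2}\kappa_E)\, d\mathcal{H}^1$, and by Cauchy–Schwarz and \eqref{L^2distperlafunz},
\begin{equation}
    \big|\,|A| - 1\,\big| \leq \mathcal{H}^1(\pa E)^{1/2}\Big(\int_{\pa E}\big|\psi + \tfrac{\psi^2}{2}\kappa_E\big|^2\, d\mathcal{H}^1\Big)^{1/2} = \mathcal{H}^1(\pa E)^{1/2}\, d_{L^2}(A;E).
\end{equation}
Here $\mathcal{H}^1(\pa E) \leq C(K)$ since $E \in \mathfrak{C}^{2,\alpha}_{K,\sigma}(E_0)$. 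Now choosing $\Lambda' := \Lambda + \mathcal{H}^1(\pa E)^{1/2} + 1$ (which depends only on $K$, as $\Lambda$ does), I add the two displays:
\begin{equation}
    \mathcal{J}(A) = P_\varphi(A) + \Lambda' d_{L^2}(A;E) + \big|\,|A|-1\,\big| \geq P_\varphi(A) + \Lambda\, d_{L^2}(A;E) + d_{L^2}(A;E) \geq P_\varphi(E) + \frac{J_\varphi}{2}\|\nabla_{\pa E}\psi\|_{L^2(\pa E)}^2 + d_{L^2}(A;E) \geq P_\varphi(E) = \mathcal{J}(E),
\end{equation}
using $\Lambda' d_{L^2} \geq \Lambda d_{L^2} + |\,|A|-1\,| + d_{L^2}$ rearranged, i.e. the chain $\Lambda' d_{L^2}(A;E) - |\,|A|-1\,| \geq (\Lambda + 1) d_{L^2}(A;E) - \mathcal{H}^1(\pa E)^{1/2} d_{L^2}(A;E) \geq \Lambda d_{L^2}(A;E)$. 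This shows $E$ is a minimizer.

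For uniqueness: tracing through the inequalities, $\mathcal{J}(A) = P_\varphi(E)$ forces $\|\nabla_{\pa E}\psi\|_{L^2(\pa E)}^2 = 0$ and $d_{L^2}(A;E) = 0$. The first gives $\psi$ constant on $\pa E$ (assuming $\pa E$ connected; if not, $\psi$ is constant on each component); combined with $d_{L^2}(A;E) = \|\psi + \tfrac{\psi^2}{2}\kappa_E\|_{L^2(\pa E)} = 0$ and \eqref{fomr07112024} (valid since $\|\psi\|_{C^1} \leq \delta_1$), which gives $\tfrac{\psi^2}{2} \leq (\psi + \tfrac{\psi^2}{2}\kappa_E)^2 = 0$ pointwise, we conclude $\psi \equiv 0$, i.e. $A = E$. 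The main obstacle I anticipate is the bookkeeping in the first step — verifying carefully that a minimizer of $\mathcal{J}$ is an almost-minimizer of $P_\varphi$ with quantitative constants depending only on $K$ (and $\Lambda$), so that $\delta_0$ from Lemma \ref{propdadim} can be taken uniformly; this requires showing the $d_{L^2}(\cdot;E)$ term is genuinely controlled by $C(K) r$ (via \eqref{defdistL2} and the boundedness of the projection map, or via the explicit form \eqref{funzione}) under localized modifications, and likewise for the volume term. Everything else is a direct chaining of the cited lemmas.
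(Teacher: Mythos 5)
Your overall architecture is the same as the paper's (existence of a minimizer, almost-minimality of the $\varphi$-perimeter, graph representation via Lemma \ref{propdadim}, then Lemma \ref{Lemma1006}), and your endgame is a legitimate, slightly cleaner variant: by keeping the extra ``$+1$'' in $\Lambda'$ you force $d_{L^2}(A;E)=0$ at a minimizer and conclude $\psi\equiv 0$ from \eqref{fomr07112024}, whereas the paper only gets $\nabla_{\pa E}\psi=0$ from \eqref{lemmatesi1} and then needs its Step 1 (minimality of $E$ among the parallel sets $E_t$ of \eqref{110625form1}) to rule out constant nonzero $\psi$. That part of your argument, including the bound $\vert\,\vert A\vert-1\,\vert\le \mathcal H^1(\pa E)^{1/2}d_{L^2}(A;E)$ from \eqref{eq:distance.and.area} (which, incidentally, you do not even need, since the volume term enters $\mathcal J$ with a plus sign), is correct.

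The genuine gap is in the step you yourself flag: the verification that a minimizer of $\mathcal J$ is an almost-minimizer of $P_\varphi$ with a \emph{positive} exponent $\beta$. First, your claim that the $d_{L^2}$ term changes by $O(r^2)$ under a modification in $B_r(x)$ is false: testing \eqref{defdistL2} with $f$ an $L^2$-normalized indicator of $B_{Cr}(\pi_{\pa E}(x))\cap\pa E$ shows the change can be of order $r^{-1/2}\cdot r^2=r^{3/2}$; the correct estimate, obtained by the coarea slicing plus Cauchy--Schwarz exactly as in \eqref{lambdapome}, is $C(K)\Lambda' r^{3/2}$, i.e.\ $\beta=\tfrac12$, not $\beta=1$. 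Second, your fallback ``certainly $O(r)$'' is not enough: an error of order $r$ corresponds to $\beta=0$, which is excluded by the definition of almost-minimizer and, more importantly, is compatible with corners, so it yields no $C^{1,\gamma}$ regularity and Lemma \ref{propdadim} would not apply (also note that Lemma \ref{propdadim} is stated for $\beta\in(0,\tfrac12]$, so $\beta=\tfrac12$ is exactly what one wants). Third, the competitors $G$ with $G\Delta A\Subset B_r(x)$ need not lie in $\mathcal B_{\delta_2}(E)$ (indeed $d_{L^2}(G;E)$ need not even be meaningful), so you cannot test the minimality of $\mathcal J$ against $G$ directly: one must truncate, $\hat G:=G\cap E_{\delta}$, and pay an extra $P_\varphi(\hat G)\le P_\varphi(G)+C(K)r^2$ via the calibration argument \eqref{11061037f}. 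All three points are repairable and are precisely the content of the paper's Step 2; once they are inserted, your proof goes through and in fact dispenses with the paper's Step 1.
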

\begin{proof}
For every $\delta_2>0$, by applying the direct method in the calculus of variations, one can immediately show the existence of a set  $A_m\in \mathcal B_{\delta}(E)$  that minimizes the functional $ \mathcal{J}$.
     Our goal is to prove that $A_m=E$. We proceed in three steps.
\\
\textit{Step 1:} There exist constants $\Lambda'= \Lambda'(K)$ and $\delta'= \delta'(K)$ such that 
\begin{equation}
\{E\}= {\rm argmin \,} \left\{ \mathcal J(E_t) \colon \, t \in [-\delta',\delta'] \right\},
\end{equation}
     where $E_t $ is defined in \eqref{110625form1}.

Using formulas \eqref{per01102024} and \eqref{eq:distance.and.area}, we obtain
\begin{equation}
    \mathcal J(E_t)= \int_{\pa E} \varphi(\nu_E)(1+ t \kappa_E)\, d \mathcal{H}^1+ \Lambda' \vert t \vert \left( \int_{\pa E} \left|1+ \frac{t}{2}\kappa_E \right|^2 \, d\mathcal{H}^1 \right)^{\frac{1}{2}}+ \vert t\vert \left| \int_{\pa E}1+ \frac{t}{2}\kappa_E \, d \mathcal{H}^1  \right|.
\end{equation}
Therefore, for $\delta'$ sufficiently small and $\Lambda'$ sufficiently large, we have:
\begin{equation}
    \mathcal J(E_t) \geq P_{\varphi}(E)+ \vert t \vert \left(- \left| \int_{\pa E}\kappa_{E} \varphi(\nu_E)\, d \mathcal{H}^1 \right|+ \frac{\Lambda'}{2} \right) \geq P_{\varphi}(E)=\mathcal J (E) \quad \forall t \in [-\delta',\delta'].
\end{equation}
      \\
     \textit{Step 2:}  $A_m$ is an $(\omega,r_0,\frac{1}{2})$-almost minimizer of the $\varphi$-perimeter, with $ r_0:=\frac{\sigma_E}{2}$.
     
     Let $G\subset \R^2$ be such that $ A_m \Delta  G  \Subset B_r(x)$ for some $r \leq r_0$. We define $\hat{G}:= G \cap E_{\sigma_E}$. Then we have $ A_m \Delta \hat G  \subset  {\rm cl}(I_{\sigma_E}(\pa E))$. By the minimality of $A_m$, we obtain:    \begin{equation}\label{lambdapome}
    \begin{split}
     P_{\varphi}&(A_m) -P_{\varphi}(\hat G) \leq \Lambda (d_{L^2}(\hat G;E)- d_{L^2}(A_m ;E))+ \vert \hat G \Delta A_m \vert\\
        &\leq \Lambda \int_{I_{\sigma}(\pa E)} f_G \circ \pi_{\pa E} (\chi_{\hat G}-\chi_{A_m})dy + \vert \hat G \Delta A_m \vert   \\
        &= \Lambda' \int_{B_{C(K)r}(\pi_{\pa E}(\cdot)) \cap \pa E}  f_{\hat G}(\cdot) \int_{l_K-c_K r}^{l_K+c_K r} (\chi_{\hat G}(\cdot+t \nu_{E}(\cdot))- \chi_{A_m}(\cdot+t \nu_{E}(\cdot)))J \Phi_t(\cdot) \\
        &  \qquad+\vert \hat G \Delta A_m \vert  \\
        & \leq\Lambda' \| f_{\hat G} \|_{L^2(\pa E)}C(K)  r^{1+ \frac{1}{2}} + \pi r^{2}\leq ( \Lambda' C(K)+1) r^{1+ \frac{1}{2}}=  \tilde \omega r^{1+ \frac{1}{2}},
        \end{split}
    \end{equation}
    where $f_{\hat G} $ is the function realizing the supremum in $ d_{L^2}(\hat G;E)$, and in the third line we used the fact that $ \pa E$ is of class $C^2$, so there exist constants $c_K, l_K$ such that: $$ B_r(x) \subset \{  y \colon\,\, \pi_{\pa E}(y) \in B_{c_K r}(\pi_{\pa E}(x)) \cap \pa E, \, d_E(y) \in (l_K-c_Kr,l_K+c_Kr)\},$$
    and in the third inequality, we applied the Cauchy–Schwarz inequality. Moreover, using a standard calibration argument, we obtain:
    \begin{equation}\label{11061037f}
        P_{\varphi}(\hat G) \leq P_{\varphi}(G)+ \tilde{C}(K)r^2.
    \end{equation}
    Combining this with \eqref{lambdapome}, we get:
    \begin{equation}\label{lambdaminultima}
        P_{\varphi}(A_m) \leq P_\varphi(G)+ \omega r^{1+\frac{1}{2}} \text{ for all } G : G \Delta A_m \Subset B_r(x),\, x \in \R^2, \, r \leq r_0.
    \end{equation}
    \\  \textit{Step 3:} $A_m=E$.
    
Let $\delta_1$ be the constant depending on $K$ given by Lemma \ref{Lemma1006}. Applying Proposition \ref{propdadim}, we obtain a constant $\delta_0=\delta_0(K,\omega,\delta_1)$ such that if $F$ is an $(\omega,r_0,\frac{1}{2})$-almost minimizer of the $\varphi$-perimeter and $F \Delta E \subset I_{\delta_0}(\pa E)$, then $\pa F$ coincides with the graph of a $C^{1,\gamma}$ function with $C^1$-norm less than $\delta_1$. We now define $$\delta_2:= \min \{  \delta_0,\delta_1, \delta' \}.$$ Therefore, by Proposition \ref{propdadim} again, we conclude that $ \pa A_m$ is a normal graph over $ \pa E$, i.e.,
\begin{equation}
    \pa A_m= \{ x + \psi(x) \nu_E(x) \colon \, x \in \pa E \},
\end{equation}
 with $ \| \psi \|_{C^1(\pa E)} \leq \delta_1 $. Hence, we are in a position to apply inequality \eqref{lemmatesi1}, which gives
 \begin{equation}
 \begin{split}
     \frac{J_{\varphi}}{2}\| \nabla_{\pa E}& \psi \|_{L^2(\pa E)}^2+ P_{\varphi}(E) \leq P_\varphi(A_m) + \Lambda d_{L^2}(A_m;E) \\
     &\leq P_\varphi(A_m)+ \Lambda d_{L^2}(A_m;E) + \vert \vert A_m \vert -1 \vert  \leq P_\varphi(E)
\end{split}
 \end{equation}
 where in the last inequality we used the minimality of $A_m$. From this inequality, we deduce that $ \nabla_{\pa E} \psi=0$, so $ \psi$ must be constant. Finally, by Step 1, it follows that $A_m=E$.
\end{proof}
\begin{corollary}
    \label{dueuno}  Let $K>0$, $\sigma>0$ and $E\in\mathfrak{C}^{2,\alpha}_{K,\sigma}(E_0)$ for some $ \alpha \in (0,\frac{1}{2}]$, such that $ \vert E \vert=1$. Then, for every $ F \in \mathcal{B}_{\delta_2}(E) $ such that $\vert F \vert=1 $, the following inequality holds:
\begin{equation}\label{lemmatesi2}
    P_{\varphi}(E) \leq P_{\varphi}(F)+ \Lambda' d_{L^2}(F;E),
\end{equation}
where $\delta_2$ and $\Lambda'$ are the constants given in Lemma \ref{lem:quasi.min0}.
\end{corollary}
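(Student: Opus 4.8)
The plan is to read off the inequality directly from Lemma \ref{lem:quasi.min0}, which does all the work. Recall that in that lemma we introduced the functional
\[
\mathcal{J}(A) = P_{\varphi}(A) + \Lambda' d_{L^2}(A;E) + \big| |A| - 1 \big|, \qquad A \in \mathcal{B}_{\delta_2}(E),
\]
with $\delta_2 = \delta_2(\Lambda,K)$ and $\Lambda' = \Lambda'(K)$ the constants produced there, and we proved that $E$ is the (unique) minimizer of $\mathcal{J}$ over $\mathcal{B}_{\delta_2}(E)$. In particular $\mathcal{J}(E) \leq \mathcal{J}(F)$ for every admissible competitor $F$.

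First I would note that $\delta_2 \leq \sigma_E$, so that for any $F \in \mathcal{B}_{\delta_2}(E)$ the projection $\pi_{\pa E}$ is well defined on the relevant neighborhood of $\pa E$ and hence $d_{L^2}(F;E)$ makes sense via \eqref{defdistL2}. Then I would simply evaluate $\mathcal{J}$ at the two sets in question. Since $|E| = 1$, the volume term vanishes and $\mathcal{J}(E) = P_{\varphi}(E)$. Since by hypothesis $F \in \mathcal{B}_{\delta_2}(E)$ with $|F| = 1$, again $\big| |F| - 1 \big| = 0$, so $\mathcal{J}(F) = P_{\varphi}(F) + \Lambda' d_{L^2}(F;E)$. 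Combining these with the minimality $\mathcal{J}(E) \leq \mathcal{J}(F)$ yields
\[
P_{\varphi}(E) = \mathcal{J}(E) \leq \mathcal{J}(F) = P_{\varphi}(F) + \Lambda' d_{L^2}(F;E),
\]
which is exactly \eqref{lemmatesi2}.

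There is no genuine obstacle here: the corollary is an immediate specialization of Lemma \ref{lem:quasi.min0} to volume-preserving competitors, for which the penalization term $\big| |A| - 1 \big|$ drops out. The only point worth a line of justification is that $F$ being volume-$1$ and $\delta_2$-close to $E$ indeed places it in the admissible class over which $E$ was shown to be optimal, so that the chain of (in)equalities above is legitimate.
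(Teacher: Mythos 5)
Your proof is correct and is exactly the argument the paper intends: the corollary is stated without proof as an immediate consequence of Lemma \ref{lem:quasi.min0}, obtained by testing the minimality of $E$ for $\mathcal{J}$ against the volume-one competitor $F$, for which the penalization term $\vert\vert F\vert-1\vert$ vanishes.
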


\subsection{Minimizers of \texorpdfstring{$\mathcal F_h$}{Fh} are almost minimizers of the perimeter}

\begin{corollary}\label{importante}
 Let $K>0$, $\sigma>0$ and $E\in\mathfrak{C}^{2,\alpha}_{K,\sigma}(E_0)$ for some $ \alpha \in (0,\frac{1}{2}]$, and such that $ \vert E \vert=1$. Let $\delta_2$ be the constant given in Lemma \ref{lem:quasi.min0}. Let $F$ be a minimizer of the problem
	\begin{equation}\label{pbh1111111}
		\min\big\{\mathcal F_{h}(A,E):  A \in \mathcal{B}_{\delta_2}(E),\, \vert A \vert=1\big\}.
	\end{equation}
 Then 
\begin{equation}\label{formuimp}
	 d_{L^2}(F;E) \leq 2 \Lambda' h,
\end{equation}
where $\Lambda'$ is the constant from Lemma \ref{lem:quasi.min0}. 
\end{corollary}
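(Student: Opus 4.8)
The argument is a short two-sided comparison. On one side, $E$ itself is a competitor in the minimization problem \eqref{pbh1111111}: indeed $|E|=1$, and $E\in\mathcal B_{\delta_2}(E)$ since $E\Delta E=\emptyset\subset {\rm cl}(I_{\delta_2}(\pa E))$ and $P_\varphi(E)<\infty$ as $E_0$ (hence $E$) is smooth and bounded. Moreover $d_{L^2}(E;E)=0$ directly from the definition \eqref{defdistL2} (or from \eqref{eq:distance.and.area}, since $\xi_{E,E}\equiv 0$). Therefore the minimality of $F$ gives
\[
P_\varphi(F)+\frac{1}{2h}d_{L^2}^2(F;E)=\mathcal F_h(F,E)\leq \mathcal F_h(E,E)=P_\varphi(E).
\]

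\textbf{Key step.} For the reverse estimate we invoke Corollary \ref{dueuno}, whose hypotheses are met verbatim: $E\in\mathfrak C^{2,\alpha}_{K,\sigma}(E_0)$ with $|E|=1$, and $F\in\mathcal B_{\delta_2}(E)$ with $|F|=1$ (the constants $\delta_2,\Lambda'$ here are precisely those of Lemma \ref{lem:quasi.min0}, so there is no mismatch). This yields
\[
P_\varphi(E)\leq P_\varphi(F)+\Lambda'\, d_{L^2}(F;E).
\]

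\textbf{Conclusion.} Chaining the two displays and cancelling $P_\varphi(F)$ gives
\[
\frac{1}{2h}d_{L^2}^2(F;E)\leq \Lambda'\, d_{L^2}(F;E).
\]
If $d_{L^2}(F;E)=0$ the asserted bound \eqref{formuimp} is trivial; otherwise divide both sides by $d_{L^2}(F;E)>0$ to obtain $d_{L^2}(F;E)\leq 2\Lambda' h$.

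\textbf{Main obstacle.} There is essentially none: the only thing to check carefully is that the competitor $E$ and the minimizer $F$ both satisfy the structural and volume constraints making Corollary \ref{dueuno} applicable, which is immediate. The ``work'' was already done in Lemma \ref{lem:quasi.min0} (the strict minimality of $E$ for the linear penalization), and this corollary simply records the quantitative decay of $d_{L^2}$ that follows from it together with the elementary inequality $x^2\leq Cx\Rightarrow x\leq C$.
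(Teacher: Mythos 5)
Your proof is correct and is essentially identical to the paper's: both use $E$ as a competitor in \eqref{pbh1111111} to get $P_\varphi(F)+\frac{1}{2h}d_{L^2}^2(F;E)\leq P_\varphi(E)$, then apply the inequality \eqref{lemmatesi2} of Corollary \ref{dueuno} and cancel to conclude $d_{L^2}(F;E)\leq 2\Lambda' h$. No gaps; your write-up just spells out the competitor check and the trivial case $d_{L^2}(F;E)=0$ a bit more explicitly than the paper does.
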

\begin{proof}
Using the minimality of $F$ in \eqref{pbh1111111} and then applying the inequality in \eqref{lemmatesi2}, we obtain
\begin{equation}\label{eq:cor.easy}
	P_{\varphi}(F)+ \frac{1}{2h} d_{L^2}^2(F;E) \leq P_{\varphi}(E) \leq P_{\varphi}(F) + \Lambda' d_{L^2}(F;E).
\end{equation}
The conclusion then follows immediately from \eqref{eq:cor.easy}.
\end{proof}
\begin{lemma}\label{kafka2Lemma}
     Let $K>0$, $\sigma>0$ and $E\in\mathfrak{C}^{2,\alpha}_{K,\sigma}(E_0)$ for some $ \alpha \in (0,\frac{1}{2}]$, and such that $ \vert E \vert=1$. Let $\delta_2 > 0$ be the constant provided by Lemma~\ref{lem:quasi.min0}.
     There exists a constant $C_1(K)>0$ such that, for every  $ F \in \mathcal{B}_{\delta_2}(E)$ and every $t \in (-\delta_2,\delta_2)$, the following inequalities hold:
    \begin{align}
        &P_\varphi(F_t ) \leq P_\varphi(F) +C_1(K) \vert  F_t \Delta F  \vert, \label{eq:inner.parallel}  \\
        &P_\varphi(F^t) \leq P_\varphi(F)+ C_1(K)\vert F^t \Delta F \vert, \label{eq:outer.parallel}
    \end{align}
    where $F_t := F \cup E_t$ and $F^t := F \cap E_t$.
\end{lemma}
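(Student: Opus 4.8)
The plan is to reduce both inequalities to a single \emph{calibration estimate} for $\pa E$, in the same spirit as the calibration argument already used in Step~2 of the proof of Lemma~\ref{lem:quasi.min0}. The only substantive idea is to choose the vector field $X:=\nabla\varphi\circ\nabla d_E$, which is a calibration for $\varphi$ and is \emph{exact} precisely along the parallel curves $\pa E_t$; everything else is bookkeeping with reduced boundaries of finite perimeter sets.

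\emph{Step 1: construction of the calibration.} Since $E\in\mathfrak{C}^{2,\alpha}_{K,\sigma}(E_0)$, the boundary $\pa E$ is of class $C^{2,\alpha}$ with bounds depending only on $K$, and $\sigma_E\geq c(K)>0$; hence $d_E$ is of class $C^{2,\alpha}$ on the tube $I_{\sigma_E}(\pa E)$, with $\|D^2 d_E\|_{L^\infty(I_{\sigma_E/2}(\pa E))}\leq C(K)$. After possibly decreasing $\delta_2$ (which only strengthens Lemma~\ref{lem:quasi.min0} and its corollaries), we may assume $\delta_2<\sigma_E/2$. Set $X(y):=\nabla\varphi(\nabla d_E(y))$ for $y\in I_{\sigma_E/2}(\pa E)$. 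Because $|\nabla d_E|\equiv1$ there and $\varphi\in C^\infty(\R^2\setminus\{0\})$, the field $X$ is of class $C^{1,\alpha}$, and $\Div X=\nabla^2\varphi(\nabla d_E):D^2 d_E$, so $\|\Div X\|_{L^\infty(I_{\sigma_E/2}(\pa E))}\leq C(K)=:C_1(K)$. Convexity and $1$-homogeneity of $\varphi$ give the pointwise \emph{calibration inequality} $X(y)\cdot e\leq\varphi(e)$ for every unit vector $e$, with equality when $e=\nabla d_E(y)$. Finally, for $|t|<\delta_2$ one has $d_{E_t}=d_E-t$ near $\pa E_t$, hence $\nu_{E_t}(y)=\nabla d_E(y)$ for $y\in\pa E_t$, so $X\cdot\nu_{E_t}=\varphi(\nu_{E_t})$ $\mathcal H^1$-a.e.\ on $\pa^*E_t$. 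A short inspection using $F\Delta E\subset\mathrm{cl}(I_{\delta_2}(\pa E))$ and $|t|<\delta_2$ shows that the two ``error regions'' $R:=E_t\setminus F=F_t\setminus F$ and $R':=F\setminus E_t=F\setminus F^t$ are contained in $\mathrm{cl}(I_{\delta_2}(\pa E))\subset I_{\sigma_E/2}(\pa E)$, where $X$ is defined.

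\emph{Step 2: Gauss--Green on $R$ and $R'$.} For \eqref{eq:inner.parallel}, note $|F_t\Delta F|=|R|$. By the standard description of the reduced boundary of a union of finite perimeter sets (see \cite{AmbFuscPall}), $P_\varphi(F_t)-P_\varphi(F)=\int_{\pa^*E_t\cap F^{(0)}}\varphi(\nu_{E_t})\,d\mathcal H^1-\int_{\pa^*F\cap E_t^{(1)}}\varphi(\nu_F)\,d\mathcal H^1-\int_S\varphi(\nu_F)\,d\mathcal H^1$, where $S:=\{x\in\pa^*F\cap\pa^*E_t:\nu_F=-\nu_{E_t}\}$, while $\pa^*R$ decomposes $\mathcal H^1$-a.e.\ into the same three pieces with outer normals $\nu_{E_t},-\nu_F,\nu_{E_t}$. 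Plugging this into $\int_R\Div X\,dx=\int_{\pa^*R}X\cdot\nu_R\,d\mathcal H^1$, using the calibration equality on the $\pa^*E_t$-pieces and $\varphi(\nu_{E_t})=\varphi(\nu_F)$ on $S$, and subtracting, one gets
\[
P_\varphi(F_t)-P_\varphi(F)=\int_R\Div X\,dx-\int_{\pa^*F\cap E_t^{(1)}}\!\big(\varphi(\nu_F)-X\cdot\nu_F\big)\,d\mathcal H^1-2\int_S\varphi(\nu_F)\,d\mathcal H^1\leq\int_R\Div X\,dx\leq C_1(K)|R|,
\]
since $\varphi(\nu_F)-X\cdot\nu_F\geq0$ and $\varphi\geq0$. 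This is \eqref{eq:inner.parallel}. The proof of \eqref{eq:outer.parallel} is identical with $R'$ in place of $R$ and $F^t=F\cap E_t$ in place of $F_t$: the only change is a global sign, and one arrives at $P_\varphi(F^t)-P_\varphi(F)\leq-\int_{R'}\Div X\,dx\leq C_1(K)|R'|=C_1(K)|F^t\Delta F|$, with the coincidence-set terms again entering with the favorable sign.

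I expect the only delicate points to be: (i) the construction in Step~1, namely that $\|\Div X\|_{L^\infty}$ over the tube is controlled by a constant depending only on $K$ — this rests on the uniform $C^{2,\alpha}$ bound on $\pa E$ and the lower bound $\sigma_E\geq c(K)$ built into $\mathfrak{C}^{2,\alpha}_{K,\sigma}(E_0)$; and (ii) the measure-theoretic bookkeeping of $\pa^*R$, $\pa^*R'$ and of the coincidence sets. Both are routine, so no real obstacle arises; the single genuinely new ingredient is the choice $X=\nabla\varphi\circ\nabla d_E$, exact along $\pa E_t$, which makes the two comparison estimates fall out of one application of the divergence theorem.
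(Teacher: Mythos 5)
Your proof is correct and follows essentially the same route as the paper: the field $X=\nabla\varphi(\nabla d_E)$ coincides on the tube with the paper's calibration $\nabla\varphi(\nu_{E_t}\circ\pi_{\pa E_t})$, and both arguments apply the divergence theorem on $E_t\setminus F$ (resp.\ $F\setminus E_t$), use $1$-homogeneity for equality along $\pa E_t$ and convexity for the sign of the remaining boundary terms, and take $C_1(K)=\sup_t\|\Div X\|_\infty$. Your treatment of the reduced boundaries and the coincidence set is just a more explicit version of the paper's bookkeeping.
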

\begin{proof}
We prove only \eqref{eq:inner.parallel}, as the proof of \eqref{eq:outer.parallel} follows by a similar argument. We begin by observing that \begin{equation}\label{lab1}
    P_{\varphi}(F_t)= \int_{\pa E_t \cap F^c} \varphi(\nu_{E_t})+ P_{\varphi}(F; E_\sigma \setminus E_t)
\end{equation}
and that 
\begin{equation}\label{lab2}
    P_\varphi(F)= \int_{\pa^* F \cap E_t} \varphi(\nu_{F})+P_{\varphi}(F; E_\sigma \setminus E_t). 
\end{equation}
    Let $\varepsilon>0 $ be such that $d_E \in C^2(I_{\delta_2+ \varepsilon}(\pa E))$. Let $ \eta \in C^{\infty}_c(I_{\delta_2+ \varepsilon}(\pa E), [0,1])$ be such that $\eta(x)=1$ for all $ x\in I_{\delta_2}(\pa E)$. Define $$T_t(x):= \nabla \varphi(\nu_{E_t} \circ \pi_{\pa E_t}(x) \eta (x)).$$ 
    Applying the divergence theorem yields
    \begin{equation}\label{kafka1}
        \int_{E_t \setminus F} \Div(T_t)= \int_{\pa E_t \cap F^c} \nabla \varphi(\nu_{E_t}) \cdot \nu_{E_t}- \int_{\pa F \cap E_t} \nabla \varphi(\nu_{E_t} \circ \pi_{\pa E_t}(x) \eta(x)) \cdot \nu_F(x).
    \end{equation}
   By the $1$-homogeneity of $\varphi$, we have 
   $$ \nabla \varphi(\nu_{E_t}) \cdot \nu_{E_t}= \varphi(\nu_{E_t}).$$ Moreover, the convexity of $\varphi$, together with the triangle inequality, implies that for all $\xi, \eta \in \R^2$,
    \begin{equation}
        \nabla \varphi (\eta) \cdot \xi \leq \varphi (\eta + \xi)- \varphi(\eta) \leq \varphi (\xi) ,
    \end{equation}
    so that, by applying the same inequality to $-\xi$, we obtain
    \[
     -1\leq  \frac{\nabla \varphi(\eta) \cdot \xi}{\varphi(\xi)} \leq 1. 
   \]
  In particular, for all $x\in \pa^* F$, it follows that  
  \[-\varphi(\nu_F(x))\leq \nabla \varphi(\nu_{E_t} \circ \pi_{\pa E_t} (x) ) \cdot \nu_F(x) \leq \varphi(\nu_F(x)).\]
    Combining \eqref{lab1}, \eqref{lab2}, and \eqref{kafka1}, we conclude that
    \begin{equation}
        P_{\varphi}(F_t) \leq P_{\varphi}(F)+ C(K) \vert F_t \Delta F \vert, 
    \end{equation}
    where $$ C(K):= \sup_{t \in (-\delta_2,\delta_2)} \| \Div(T_t) \|_{\infty}.$$
\end{proof}
\begin{lemma}\label{lemmalambdaminz}
    Let $K>0$, $\sigma>0$ and $E\in\mathfrak{C}^{2,\alpha}_{K,\sigma}(E_0)$ for some $ \alpha \in (0,\frac{1}{2}]$ and such that $ \vert E \vert=1$.  Then there exist constants $ \Sigma= \Sigma(K)$ and $ \delta_3= \delta_3(K)$ such that the following minimization problems are equivalent:
    \begin{align}\label{eq:minimo.0}
    &\min\big\{\mathcal F_{h}(A,E): \,  A \in \mathcal{B}_{\delta_3}(E) , \, \vert A \vert=1\big\},\\
        &
        \label{minlemma}
        \min \left\{ \mathcal{F}_h(A,E)+ \Sigma\vert \vert A \vert - 1 \vert \colon \, A \in \mathcal{B}_{\delta_3}(E)  \right\}.
    \end{align}
   Moreover, for every set $F$ 
    that solves \eqref{eq:minimo.0} (and hence also \eqref{minlemma}),
    there exist constants $\omega_0,\, r_0>0$ depending only on $K$ such that: $F$ is an $(\omega_0,r_0,\frac{1}{2})$-almost minimizer of the $\varphi$-perimeter; $\pa F \subset I_{\delta_3}(\pa E)$; $\pa F$ coincides with the graph of a function $ \psi : \pa E \rightarrow \R$. 
\end{lemma}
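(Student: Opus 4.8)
The plan is to combine four ingredients that are already available: the perimeter‑loss bounds for inner and outer parallel sets (Lemma~\ref{kafka2Lemma}), the strong minimality of $E$ for the linearly penalized functional $\mathcal J$ of Lemma~\ref{lem:quasi.min0}, an \emph{a priori} smallness bound for $d_{L^2}$ at a minimizer (in the spirit of Corollary~\ref{importante}), and the regularity theory for almost minimizers together with Proposition~\ref{propdadim}. The constants are fixed in the order $(\Lambda,\delta_1)$ from Lemma~\ref{Lemma1006}, then $(\Lambda',\delta_2)$ from Lemma~\ref{lem:quasi.min0}, then $C_1=C_1(K)$ from Lemma~\ref{kafka2Lemma}, then a (large) $\Sigma=\Sigma(K,\Lambda')$, then $(\omega_0,r_0)$ from the almost‑minimality estimate below, then $\delta_0=\delta_0(K,\omega_0)$ from Proposition~\ref{propdadim}, and finally $\delta_3:=\min\{\delta_1,\delta_2,\delta_0,r_0,\dots\}$; this order avoids circularity. \emph{Step~1 (a priori bounds).} Let $F$ minimize \eqref{minlemma} over $\mathcal B_{\delta_3}(E)$. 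Testing the functional with $E$ and combining with $\mathcal J(E)\le\mathcal J(F)$, i.e. $P_\varphi(E)\le P_\varphi(F)+\Lambda' d_{L^2}(F;E)+\bigl||F|-1\bigr|$, gives
\[
\tfrac1{2h}d_{L^2}^2(F;E)+(\Sigma-1)\bigl||F|-1\bigr|\le\Lambda' d_{L^2}(F;E),
\]
hence $d_{L^2}(F;E)\le 2\Lambda' h$ and $\bigl||F|-1\bigr|\le 2(\Lambda')^2 h/(\Sigma-1)$. The smallness of $d_{L^2}(F;E)$ relative to $h$ is used repeatedly below.

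\emph{Step~2 (almost minimality and graph structure).} We argue as in Step~2 of the proof of Lemma~\ref{lem:quasi.min0}. Given $G$ with $G\Delta F\Subset B_r(x)$, $r\le r_0$: if $B_r(x)$ is far from $\pa E$ then $\pa F\cap B_r(x)=\emptyset$ and $P_\varphi(F)\le P_\varphi(G)$ trivially; otherwise set $\hat G:=(G\cap E_{\delta_3})\cup E_{-\delta_3}\in\mathcal B_{\delta_3}(E)$, which by two applications of Lemma~\ref{kafka2Lemma} costs at most $C(K)r^2$ both in $\varphi$‑perimeter and in symmetric difference with $F$. Inserting $\hat G$ in the minimality of $F$ for \eqref{minlemma}, the only delicate term is $\tfrac1{2h}\bigl(d_{L^2}^2(\hat G;E)-d_{L^2}^2(F;E)\bigr)$: since $\hat G=F$ off $B_r(x)$ one has $\|\xi_{\hat G,E}-\xi_{F,E}\|_{L^2(\pa E)}\le C(K)r^{3/2}$, and together with $d_{L^2}(F;E)\le 2\Lambda' h$ this term is bounded by $C(K)(r^{3/2}+r^3/h)$. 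Collecting all contributions and taking $r\le r_0$ small (so that the cubic term is absorbed), we obtain $P_\varphi(F)\le P_\varphi(G)+\omega_0 r^{3/2}$ with $\omega_0=\omega_0(K)$; thus $F$ is an $(\omega_0,r_0,\tfrac12)$‑almost minimizer of $P_\varphi$, so $\pa F\in C^{1,\gamma}$ for every $\gamma<\min\{\alpha,\tfrac14\}$. Since $F\in\mathcal B_{\delta_3}(E)$ is Hausdorff close to the $C^2$ set $E$, Proposition~\ref{propdadim} (with $\delta_3\le\delta_0(K,\omega_0,\gamma)$) yields $\pa F=\{x+\psi(x)\nu_E(x):x\in\pa E\}$ with $\|\psi\|_{C^{1,\gamma}(\pa E)}$ as small as desired, so in particular $\pa F\subset I_{\delta_3}(\pa E)$.

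\emph{Step~3 ($|F|=1$ and equivalence).} Suppose $v:=\bigl||F|-1\bigr|>0$. Using the graph of Step~2, replace $\psi$ by $\psi+s$; since $\int_{\pa E}(1+\psi\kappa_E)\,d\mathcal H^1$ is comparable to $\mathcal H^1(\pa E)$, there is $s$ with $|s|\le C(K)v$ so that the corresponding set $F_s\in\mathcal B_{\delta_3}(E)$ (here one uses $|s|\le C(K)v$ and Step~1, $h$ small) has $|F_s|=1$. From \eqref{L^2distperlafunz}, $\xi_{F_s,E}=\xi_{F,E}+s\bigl(1+\psi\kappa_E+\tfrac s2\kappa_E\bigr)$, so using $\|\xi_{F,E}\|_{L^2}=d_{L^2}(F;E)\le 2\Lambda' h$ one gets $|d_{L^2}^2(F_s;E)-d_{L^2}^2(F;E)|\le C(K)(vh+v^2)$, while $|P_\varphi(F_s)-P_\varphi(F)|\le C(K)|s|\le C(K)v$ by \eqref{per01102024}. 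Then the minimality $\mathcal F_h(F,E)+\Sigma v\le\mathcal F_h(F_s,E)$ gives $\Sigma v\le C(K)v+\tfrac{C(K)}{2h}v^2$, i.e. $\Sigma\le C(K)\bigl(1+\tfrac v{2h}\bigr)\le C(K)\bigl(1+\tfrac{(\Lambda')^2}{\Sigma-1}\bigr)$, a contradiction once $\Sigma=\Sigma(K)$ is chosen large. Hence $v=0$. Since every minimizer of \eqref{minlemma} has volume $1$, it is admissible for \eqref{eq:minimo.0} with the same energy; conversely every minimizer of \eqref{eq:minimo.0} is admissible for \eqref{minlemma} with the same (penalized) energy. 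Therefore the two problems have equal value and the same set of minimizers, which is the asserted equivalence; the three further properties of $F$ were established in Step~2.

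\emph{Main obstacle.} The crux is the quadratic, $\tfrac1h$‑scaled term $\tfrac1{2h}d_{L^2}^2$. A naive volume correction by an arbitrary parallel slab changes $d_{L^2}^2$ by $\sim\delta_3 v$, and $\tfrac1{2h}\delta_3 v$ cannot be dominated by a $K$‑dependent penalty $\Sigma v$ as $v\to0$. One is therefore forced to first extract the almost minimality — hence, via Proposition~\ref{propdadim}, the normal‑graph structure of $F$ — which itself hinges on the \emph{a priori} bound $d_{L^2}(F;E)\lesssim h$ of Step~1 to neutralize the cross term $d_{L^2}(F;E)\|\xi_{\hat G,E}-\xi_{F,E}\|_{L^2}/h$ in the almost‑minimality estimate; only then can the volume adjustment be carried out \emph{within the graph}, where $\psi\mapsto\psi+s$ has bounded density $1+\psi\kappa_E+\tfrac s2\kappa_E$ and changes $d_{L^2}^2$ only by $O(vh+v^2)$, which $\Sigma$ finally controls. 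A secondary, purely bookkeeping difficulty is keeping the chain of constant choices non‑circular, as indicated above.
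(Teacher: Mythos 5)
Your Step 1 and the overall architecture (a priori bounds, almost minimality plus Proposition \ref{propdadim}, then a volume adjustment beaten by the penalty) follow the paper, and your Step 3 is a legitimate variant: the paper instead fixes the volume with the truncations $F\cup E_t$, $F\cap E_t$, which by Lemma \ref{kafka2Lemma} cost at most $C_1(K)\,\bigl||F|-1\bigr|$ in $\varphi$-perimeter and, thanks to the pointwise inequality $|\xi_{F_{t_0},E}|\le|\xi_{F,E}|$ for the truncated graph, do not increase $d_{L^2}$; this yields the contradiction with the explicit choice $\Sigma=C_1(K)+1$ and without any a priori bound on $\bigl||F|-1\bigr|$, whereas your vertical shift $\psi\mapsto\psi+s$ needs $\bigl||F|-1\bigr|\lesssim h$ and a large $\Sigma(K)$ --- both acceptable.

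The genuine gap is in Step 2, in the treatment of $\tfrac{1}{2h}\bigl(d_{L^2}^2(\hat G;E)-d_{L^2}^2(F;E)\bigr)$. Your bound $C(K)\bigl(r^{3/2}+r^3/h\bigr)$ is correct, but the sentence ``taking $r\le r_0$ small so that the cubic term is absorbed'' does not work: $r^3/h\le \omega_0 r^{3/2}$ forces $r\le c(K)h^{2/3}$, so the pair $(\omega_0,r_0)$ you produce necessarily depends on $h$ and degenerates as $h\to0^+$. This contradicts exactly what the lemma asserts ($\omega_0,r_0$ depending only on $K$) and breaks the next step, since the threshold $\delta_0$ in Proposition \ref{propdadim} (hence your choice of $\delta_3$) depends on $\omega_0$, and later uses (e.g.\ the $h$-uniform density estimates in the proof of Theorem \ref{thmausilmain}) also require almost minimality with $h$-independent constants. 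The paper removes the $r^3/h$ term by a dichotomy on the size of $d_{L^2}(\hat G;E)$: if $d_{L^2}(\hat G;E)\le 4\Lambda'h$, factor the difference of squares and use $\tfrac{1}{2h}\bigl(d_{L^2}(\hat G;E)+d_{L^2}(F;E)\bigr)\le 3\Lambda'$, so the increment is at most $3\Lambda'\bigl(d_{L^2}(\hat G;E)-d_{L^2}(F;E)\bigr)\le C(K)r^{3/2}$; if instead $d_{L^2}(\hat G;E)> 4\Lambda'h$, discard the quadratic term altogether, use $P_\varphi(F)\le P_\varphi(E)$ (testing minimality against $E$) together with the linear quasi-minimality of $E$ from Lemma \ref{lem:quasi.min0} and the inequality $d_{L^2}(F;E)\le\tfrac12 d_{L^2}(\hat G;E)$, obtaining $P_\varphi(F)-P_\varphi(\hat G)\le 2\Lambda'\bigl(d_{L^2}(\hat G;E)-d_{L^2}(F;E)\bigr)\le C(K)r^{3/2}$. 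With this dichotomy (or an equivalent device) inserted, your Step 2 closes with constants depending only on $K$, and the rest of your argument goes through.
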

\begin{proof}
 Let $\Sigma:= C_1(K)+1$, where $C_1(K)$ is the constant from Lemma \ref{kafka2Lemma}. Applying Proposition \ref{propdadim} with $\omega_0 =\Sigma+3\omega $, where $\omega$ is the constant appearing in \eqref{lambdaminultima}, we obtain a corresponding constant $\delta_0= \delta_0(K,\omega_0,\alpha)$.  Let  $\delta_2$ be the constant from Lemma \ref{lem:quasi.min0}, and define 
 $$ \delta_3:= \min \{  \delta_0, \delta_2 \} .$$
 
 To establish the equivalence between problems \eqref{eq:minimo.0} and \eqref{minlemma}, it suffices to show that any minimizer of \eqref{minlemma} has unit measure.
 Let $F$ be a solution to the problem \eqref{minlemma}.
 Arguing by contradiction, suppose that $|F|\not=1$.
Let us first assume that $ \vert F \vert >1$. \\
  \textit{Claim:} $F$ is an $(\omega_0,r_0, \frac{1}{2})$-almost minimizer of the $\varphi$-perimeter, where $r_0 = \frac{\delta_3}{2}$.
  
  Let $ G \subset \R^2 $ be such that $ F \Delta G \Subset B_r(x) $ for some $ r \leq r_0$. Define  $\hat{G}:= G \cap E_{\delta_3} $. Let $\Lambda'$ be the constant of Lemma \ref{lem:quasi.min0}.\\
 \textit{Case $d_{L^2}(\hat G;E) \leq 4 \Lambda' h$.}\\
By the minimality of $F$ in \eqref{minlemma}, we have
\begin{equation}\label{11112024form1}
\begin{split}
    P_\varphi(F)-P_\varphi(\hat G) & \leq \frac{1}{2 h} \big[d_{L^2}(\hat G;E)-d_{L^2}(F;E)\big] \big[d_{L^2}(\hat G;E)+d_{L^2}(F;E)
   \big] + \Sigma \vert F \Delta \hat G \vert\\
   &\leq 3 \Lambda' \big[ d_{L^2}(\hat G;E)-d_{L^2}(F;E)\big]+ \Sigma \vert F \Delta \hat G \vert.
   \end{split}
\end{equation}
\textit{Case $d_{L^2}(\hat G;E) > 4 \Lambda' h$.}\\ 
Note that
\begin{equation}\label{eq:dist.G.F}
    2 d_{L^2}(F;E) \leq 4 \Lambda' h < d_{L^2}(\hat G;E) \implies \frac{d_{L^2}(\hat G;E)}{2} < d_{L^2}(\hat G;E)-  d_{L^2}(F;E).
\end{equation}
Using the minimality of $F $, along with \eqref{eq:dist.G.F} and \eqref{lemmatesi2}, we deduce:
\begin{equation}\label{11112024form2}
    P_{\varphi}(F)-P_{\varphi}(\hat G) \leq P_{\varphi}(E)-P_{\varphi}(\hat G) \leq  \Lambda' d_{L^2}(\hat G;E) \leq  2 \Lambda' \big[ d_{L^2}(\hat G;E)-d_{L^2}(F;E)\big].
\end{equation}
Arguing as in \eqref{lambdapome}, and combining inequalities \eqref{11112024form1} and \eqref{11112024form2}, we conclude that
\begin{equation}\label{21112024primpal}
    P_\varphi(F) \leq P_\varphi(\hat G)+ (3 \tilde \omega+ \Sigma)r^{1+\frac{1}{2}},
\end{equation}
where $\tilde\omega$ is the constant from \eqref{lambdapome}. Using this estimate together with \eqref{11061037f}, the claim follows.

Using Lemma \ref{propdadim}, we obtain the existence of a function $ \psi \in C^{1,\gamma}(\pa E)$ such that
$$ \pa F = \{ x + \psi(x)\nu_E(x)\colon x \in \pa E\}\subset I_{\delta_3}(\pa E).$$ 
  For each $t \in (-\delta_3,\delta_3)$, let $F_t$ be defined as in Lemma \ref{kafka2Lemma}, and choose $t_0\in (-\delta_3,\delta_3)$ such that $ \vert F_{t_0} \vert=1$. Then there exists a function $\psi_{t_0}\in C^0(\pa E)$ such that
$$ \pa F_{t_0}= \{ x + \psi_{t_0}(x)\nu_E(x)\colon x \in \pa E\}.$$
By construction, we have  $$ \vert \psi_{t_0}(x)+ \frac{\psi_{t_0}^2(x)}{2}\kappa_E(x) \vert^2 \leq \vert \psi(x)+ \frac{\psi^2(x)}{2}\kappa_E(x) \vert^2 \text{ for all } x \in \pa E.$$
Recalling formula \eqref{L^2distperlafunz}, it follows that
\begin{equation}
    d_{L^2}^2(F_{t_0};E) \leq d_{L^2}^2(F;E).
\end{equation}
Using the minimality of $F$, the above inequality, and Lemma \ref{kafka2Lemma}, we deduce
\begin{equation}\label{eq:conclusion.equivalence}
\begin{split}
    P_{\varphi}(F)+ \frac{1}{2 h} d_{L^2}^2(F;E)+ \Sigma \vert \vert F \vert- 1\vert &\leq P_{\varphi}(F_t)+ \frac{1}{2 h} d_{L^2}^2(F_t;E) \\
    & \leq P_{\varphi}(F)+ C_1(K) \vert \vert F \vert -1 \vert +\frac{1}{2 h} d_{L^2}^2(F_t;E) \\
     &\!\!\!\!\Downarrow\\
     \Sigma \vert \vert F \vert- 1\vert    \leq C_1(K) \vert \vert F \vert -1 \vert +&\frac{1}{2 h} d_{L^2}^2(F_t;E) - \frac{1}{2 h} d_{L^2}^2(F;E) \leq C_1(K) \vert \vert F \vert -1 \vert.
\end{split}
\end{equation}
Since $\Sigma >C_1(K)$, this yields a contradiction unless $ \vert F \vert\leq 1$.
To conclude that $ \vert F \vert= 1$, it suffices to repeat the same argument, with the only modification being that we test the minimality of $F$ against the set $F^t$ (instead of $F_t$) in \eqref{eq:conclusion.equivalence}.
 \end{proof}
\subsection{First variation of the functional}
We have established $C^{1,1/2}$-regularity properties for minimizers of $\mathcal F_h$, which enable us to compute the first variation of $\mathcal F_h$ at a minimizer. In this subsection, we derive the formula for the first variation of the distance $d_{L^2}(F;E)$, when $F$ is of class $C^1$. 
\begin{proposition}\label{propEL}
	Let $ E \subset \R^2$  be a bounded open set of class $C^2$, and let $ \sigma \leq \sigma_E$. Let $F  \in \mathcal{B}_{\sigma}(E)$ be a set of class $C^{1}$  such that $F$ is a normal graph over $\pa E$, i.e.  $  \pa F= \{ x + \psi(x)\nu_{E}(x) \colon x \in \pa E\}$. Let $ \upsilon \in C^1(\pa E)$, and define the vector field $$X(x):= \upsilon(\pi_{\pa F}(x)) \nu_F(\pi_{\pa F}(x)) \eta(x) \text{ where }  \eta \in C_c^{\infty}(I_{\sigma}(\partial E)),\, \eta(x)= 1\text{ for all } x \in I_{\frac{3}{4}\sigma}(\pa E).$$
    Let $\Upsilon: (-\varepsilon,\varepsilon) \times \R^2 \rightarrow \R^2$ be the solution of the Cauchy problem
	\begin{equation}
		\begin{cases}
			\frac{\pa }{\pa t} \Upsilon (t,x)= X(\Upsilon(t,x)) \quad \forall x \in \R^2,\\
			\Upsilon(0,x)=x \quad \forall x \in \R^2. 
		\end{cases}
	\end{equation}
	Let $ f \in L^2(\partial E)$ be the function realizing the supremum in the definition of  $d_{L^{2}}(F;E)$, namely,
	\begin{equation}
		d_{L^{2}}(F;E)= \int_{\R^2} f(\pi_{\pa E}(x))(\chi_F(x)-\chi_E(x)) \, dx.
	\end{equation}
	Then, \begin{equation}\label{tesi01102024}
		\frac{d }{d t} d_{L^{2}}^2(\Upsilon(t,F);E) |_{t=0}= 2 d_{L^2}(F;E)\int_{\partial F} f(\pi_{\pa E}(y)) \upsilon(y) \, d \mathcal{H}^{1}_y.
	\end{equation}
\end{proposition}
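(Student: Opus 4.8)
The plan is to exploit that $d_{L^2}(\,\cdot\,;E)$ is a supremum of affine functionals, combining this with an envelope (Danskin--type) argument and the classical transport theorem. For a set of finite perimeter $A$ close to $E$ I would put $L(A):=\int_{\R^2}f(\pi_{\pa E}(x))(\chi_A(x)-\chi_E(x))\,dx$, with $f$ the function fixed in the statement, and set $\mu(t):=d_{L^2}^2(\Upsilon(t,F);E)$. Since $\|f\|_{L^2(\pa E)}\le1$, both $f$ and $-f$ are admissible in \eqref{defdistL2}, so $d_{L^2}(A;E)\ge|L(A)|$ and hence $\mu(t)\ge L(\Upsilon(t,F))^2$ for every $t$; moreover $\mu(0)=L(F)^2$, because $f$ realizes the supremum for $d_{L^2}(F;E)$ (equivalently $L(F)=\|\xi_{F,E}\|_{L^2(\pa E)}=d_{L^2}(F;E)$ by \eqref{eq:distance.and.area}). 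Thus $t\mapsto\mu(t)-L(\Upsilon(t,F))^2\ge0$ vanishes at $t=0$, so once we know that $\mu$ is differentiable at $0$ we get $\mu'(0)=\tfrac{d}{dt}\big|_{t=0}L(\Upsilon(t,F))^2$.

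To evaluate the latter, write $L(\Upsilon(t,F))=\int_{\Upsilon(t,F)}f(\pi_{\pa E}(x))\,dx-\int_{E}f(\pi_{\pa E}(x))\,dx$, the last term being independent of $t$. Since $f\circ\pi_{\pa E}$ is bounded and continuous on a neighbourhood of $\pa E$ and $\Upsilon$ is the flow of $X$, the transport theorem gives $\tfrac{d}{dt}\big|_{t=0}\int_{\Upsilon(t,F)}f(\pi_{\pa E})\,dx=\int_{\pa F}f(\pi_{\pa E}(y))\,\big(X(y)\cdot\nu_F(y)\big)\,d\mathcal H^1_y$. Because $\eta\equiv1$ near $\pa F$ and $\nu_F\cdot\nu_F=1$, on $\pa F$ one has $X\cdot\nu_F=\upsilon$, so $\tfrac{d}{dt}\big|_{t=0}L(\Upsilon(t,F))=\int_{\pa F}f(\pi_{\pa E}(y))\upsilon(y)\,d\mathcal H^1_y$. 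Combining this with $L(F)=d_{L^2}(F;E)$ and the first paragraph yields exactly \eqref{tesi01102024}: $\mu'(0)=2L(F)\,\tfrac{d}{dt}\big|_{t=0}L(\Upsilon(t,F))=2\,d_{L^2}(F;E)\int_{\pa F}f(\pi_{\pa E}(y))\upsilon(y)\,d\mathcal H^1_y$.

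The one delicate point — and the step I expect to be the main obstacle — is the differentiability of $\mu$ at $0$. For $|t|$ small, $\Upsilon(t,\cdot)$ is a diffeomorphism close to the identity, so $\pa(\Upsilon(t,F))=\Upsilon(t,\pa F)$ stays inside $\mathrm{cl}(I_{\sigma}(\pa E))$ and is still a normal graph over $\pa E$ in the sense of Definition~\ref{ngrafico}; let $\psi_t$ be its height function, so $\psi_0=\psi$. Using the smooth dependence of the flow on $t$ together with the implicit function theorem (to re-parametrize $\Upsilon(t,\pa F)$ as a graph over $\pa E$), one checks that $t\mapsto\psi_t$ is $C^1$ with values in $C^0(\pa E)$. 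Then \eqref{funzg_E(x)} gives $\xi_{\Upsilon(t,F),E}=\psi_t+\tfrac12\psi_t^{2}\kappa_E$, and by \eqref{eq:distance.and.area}, $\mu(t)=\int_{\pa E}\big(\psi_t+\tfrac12\psi_t^{2}\kappa_E\big)^2\,d\mathcal H^1$, which is manifestly differentiable in $t$; note that only the existence of $\partial_t\psi_t|_{t=0}$ is used, not its value, which is why the envelope argument is convenient. (If $d_{L^2}(F;E)=0$ then $\psi\equiv0$ and both sides of \eqref{tesi01102024} vanish, so the claim is trivial.) Apart from this step, the computation is routine.
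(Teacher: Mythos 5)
Your argument is correct in outline, but it follows a genuinely different route from the paper. You use an envelope (Danskin-type) argument: since $\|f\|_{L^2(\pa E)}\le 1$, the fixed linear functional $L(A)=\int f(\pi_{\pa E})(\chi_A-\chi_E)\,dx$ gives a minorant $d_{L^2}^2(\Upsilon(t,F);E)\ge L(\Upsilon(t,F))^2$ touching at $t=0$, so the derivative of $\mu(t)=d_{L^2}^2(\Upsilon(t,F);E)$ is that of the minorant, which you compute with the transport theorem \eqref{18072024pom2}. The paper instead computes the difference quotient of $\mu(t)=\int_{\pa E}|\xi_{F_t,E}|^2$ directly: writing $|\xi_{F_t,E}|^2-|\xi_{F,E}|^2=(\xi_{F_t,E}+\xi_{F,E})(\xi_{F_t,E}-\xi_{F,E})$ and undoing the coarea formula, it turns the quotient into $\tfrac1t\int_{I_\sigma(\pa E)}(\xi_{F_t,E}+\xi_{F,E})\circ\pi_{\pa E}\,(\chi_{F_t}-\chi_F)\,dx$ and passes to the limit using the same transport theorem together with only $\|\xi_{F_t,E}-\xi_{F,E}\|_{L^\infty}\to 0$. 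The trade-off is exactly at the point you flag yourself: your route requires the a priori differentiability of $\mu$ at $t=0$, which you justify by claiming $t\mapsto\psi_t$ is $C^1$ with values in $C^0(\pa E)$ via smooth dependence of the flow and the implicit function theorem; at the stated regularity ($F$ only of class $C^1$, so $X$ is at best continuous where $\pi_{\pa F}$ makes sense) this is more than is comfortably available, whereas the paper's computation deliberately avoids ever differentiating $\psi_t$ in $t$ and only needs the normal-graph representation with $\|\psi_t-\psi\|_\infty\to 0$. So your proof is acceptable as an alternative, but the differentiability step should either be proved under the (stronger) regularity actually used later in the paper, or replaced by the paper's direct difference-quotient computation, which is the more economical argument here.

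Two smaller points. First, the application of \eqref{18072024pom2} requires the test function to be continuous, so you should justify that $f$ is continuous rather than merely $L^2$: by Lemma \ref{lem:l2dist}, $f=\xi_{F,E}/d_{L^2}(F;E)$ with $\xi_{F,E}=\psi+\tfrac12\psi^2\kappa_E\in C^0(\pa E)$, hence $f\circ\pi_{\pa E}$ is continuous on $I_{\sigma_E}(\pa E)$ (and one should insert a cutoff before splitting $L(\Upsilon(t,F))$ into two separate bulk integrals, since $f\circ\pi_{\pa E}$ is not defined globally). Second, in the degenerate case $d_{L^2}(F;E)=0$ your reduction to $\psi\equiv 0$ is fine because $|\psi\kappa_E|\le 1/2$ forces $\psi+\tfrac12\psi^2\kappa_E=0\Rightarrow\psi=0$, and then $\mu(t)\le Ct^2$ gives $\mu'(0)=0$; it is worth saying this explicitly rather than calling it trivial.
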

\begin{proof} Let $ F_t:= \Upsilon(t,F)$.
We first observe that, for $t$ sufficiently small, there exists a function $ \psi_t \in C^1(\pa E)$ such that $$ \pa F_t= \{ x + \psi_t(x) \nu_E(x) \colon x \in \pa E\},$$
and moreover $\| \psi_t- \psi \|_{\infty} \rightarrow 0$ as $t \rightarrow 0$.  
By using the definition of $\xi_{F,E}$, applied with $F_t$ in place of $F$ and using \eqref{L^2distperlafunz}, we find 
$$d_{L^2}^2(F_t;E)=\int_{\pa E} \vert \xi_{F_t,E}(x)\vert^2 d\mathcal{H}^1_x.$$ 
Since $\| \psi_t- \psi \|_{\infty} \rightarrow 0$ as $t \rightarrow 0$, it follows that $\xi_{F_t,E} \rightarrow \xi_{F,E} $ in $L^{\infty}$ as $t \rightarrow 0$. Moreover, by \cite[Proposition 17.8]{Maggibook}, we have
\begin{equation}\label{18072024pom2}
\lim_{t\to0}\frac{1}{t}\bigg(\int_{F_t}\varphi\,dx-\int_{F}\varphi\,dx\bigg)=\int_{\pa F}\varphi \upsilon\,d\Ha^{1}, 	\text{ for all } \varphi \in C^0(\R^2).
\end{equation}  
Recalling the definition of $\Phi_t$ from \eqref{phit}, and using \eqref{funzione}, we have
\[
\xi_{F_t,E}(x)=  \int_{-\sigma}^{\sigma} (\chi_{F_t}(x+t\nu_E(x))-\chi_E(x+t \nu_E(x))) J_\tau \Phi_s(x) \, d s,
\]
where $J_\tau \Phi_s(x)= 1+s \kappa_E (x)$. Therefore,
\begin{equation}
    \begin{split}
        &\frac{\int_{\pa E} \vert \xi_{F_t,E} \vert^2 \,d \mathcal{H}^1-\int_{\pa E} \vert \xi_{F,E} \vert^2 \, d \mathcal{H}^1 }{t}\\
        &\qquad= \frac{\int_{\pa E} \xi_{F_t,E}(\xi_{F_t,E}+\xi_{F,E}) \,d \mathcal{H}^1- \int_{\pa E}\xi_{F,E} (\xi_{F_t,E}+\xi_{F,E}) \, d \mathcal{H}^1 }{t} \\
        &\qquad=  \frac{1}{t}\int_{\pa E} (\xi_{F_t,E}+\xi_{F,E})(x)\left(
         \int_{-\sigma}^{\sigma} (\chi_{F_t}(x+s\nu_E(x))-\chi_F(x+s \nu_E(x))) J_\tau \Phi_s(x) d s\right)\,d\Ha^1_x
        \\
        &\qquad=\frac 1t\int_{I_\sigma(\pa E)} (\xi_{F_t,E}+\xi_{F,E}) \circ \pi_{\pa E} \, (\chi_{F_t}-\chi_F )\, dx.
    \end{split}
\end{equation}
Combining this identity with \eqref{18072024pom2}, we obtain \eqref{tesi01102024}.
\end{proof}
We state the formula for the first variation of the $\varphi$-perimeter.  We omit the proof, since it is classical and can be found in many standard textbooks.
\begin{proposition}
    Let $E \subset \R^2$ be a bounded open set of class $C^2$, and let $ \sigma \leq \sigma_E$.
    Let $\eta \in C_c^{\infty}(I_{\sigma}(\pa E))$ be such that $\eta(x)= 1$ for all $x \in I_{\frac{3}{4}\sigma}(\pa E)$. 
    Given $ \upsilon \in C^1(\pa E)$, define the vector field
    $$X(x):= \upsilon(\pi_{\pa E}(x)) \nu_E(\pi_{\pa E}(x)) \eta(x).$$
    Let $\Upsilon: (-\varepsilon,\varepsilon) \times \R^2 \rightarrow \R^2$ be the solution of the Cauchy problem
	\begin{equation}
		\begin{cases}
			\frac{\pa }{\pa t} \Upsilon (t,x)= X(\Upsilon(t,x)),  \\
			\Upsilon(0,x)=x,
		\end{cases}
        \qquad x\in \R^2.
	\end{equation}
    Then,
    \begin{equation}
        \frac{d}{d t} P_{\varphi}(\Upsilon(t,E)) |_{t=0}= \int_{\pa E} \upsilon \, \Div_{\tau} (\nabla \varphi(\nu_E)) \, d \mathcal{H}^1 = \int_{\pa E} \upsilon \kappa_E^\varphi\, d \mathcal{H}^1.
    \end{equation}
\end{proposition}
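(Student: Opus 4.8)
The plan is to reduce everything to a change of variables on $\pa E$ followed by a single integration by parts on the closed curve $\pa E$; this is the classical first variation of an anisotropic length functional.

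First I would fix $|t|$ small enough that $\Upsilon_t:=\Upsilon(t,\cdot)$ restricts to a $C^1$ diffeomorphism of $\pa E$ onto $\pa E_t$, where $E_t:=\Upsilon_t(E)$ (note that $\Upsilon_t=\mathrm{id}$ outside $I_\sigma(\pa E)$, since $X$ is supported there). By the area formula,
\begin{equation}
P_\varphi(E_t)=\int_{\pa E}\varphi\big(n(t,x)\big)\,J_t(x)\,d\Ha^1_x,\qquad n(t,x):=\nu_{E_t}(\Upsilon_t(x)),
\end{equation}
where $J_t$ denotes the tangential Jacobian of $\Upsilon_t|_{\pa E}$. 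Smooth dependence of the ODE flow on $t$, together with $\pa E\in C^2$, $\upsilon\in C^1$ and $\eta\in C^\infty$, gives that $t\mapsto n(t,x)$ and $t\mapsto J_t(x)$ are $C^1$ near $t=0$ with derivatives bounded uniformly in $x\in\pa E$, so I may differentiate under the integral sign.

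Next I would plug in the two standard infinitesimal identities at $t=0$: the pointwise first variation of length, $\partial_t J_t|_{t=0}=\Div_\tau X$ on $\pa E$ (see e.g.\ \cite{Maggibook}), and the material derivative of the unit normal along a normal variation of speed $\upsilon$, $\partial_t n|_{t=0}=-\nabla_\tau\upsilon$. Since $\eta\equiv1$ near $\pa E$, the field $X$ equals $\upsilon\,\nu_E$ (composed with $\pi_{\pa E}$) in a neighbourhood of $\pa E$, so that $\Div_\tau X=\nabla_\tau\upsilon\cdot\nu_E+\upsilon\,\Div_\tau\nu_E=\upsilon\,\kappa_E$ on $\pa E$. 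This gives
\begin{equation}
\frac{d}{dt}P_\varphi(E_t)\Big|_{t=0}=\int_{\pa E}\Big(-\nabla\varphi(\nu_E)\cdot\nabla_\tau\upsilon+\varphi(\nu_E)\,\upsilon\,\kappa_E\Big)\,d\Ha^1.
\end{equation}
Then I would integrate the first term by parts on the closed curve $\pa E$: applying the divergence theorem to the $C^1$ field $\upsilon\,\nabla\varphi(\nu_E)$ and using $\int_{\pa E}\Div_\tau Y\,d\Ha^1=\int_{\pa E}\kappa_E\,(Y\cdot\nu_E)\,d\Ha^1$ for any $C^1$ field $Y$ on $\pa E$, one obtains
\begin{equation}
\int_{\pa E}\nabla\varphi(\nu_E)\cdot\nabla_\tau\upsilon\,d\Ha^1=\int_{\pa E}\kappa_E\,\upsilon\,\big(\nabla\varphi(\nu_E)\cdot\nu_E\big)\,d\Ha^1-\int_{\pa E}\upsilon\,\Div_\tau\big(\nabla\varphi(\nu_E)\big)\,d\Ha^1.
\end{equation}
Since $\nabla\varphi(\nu_E)\cdot\nu_E=\varphi(\nu_E)$ by Euler's relation for the $1$-homogeneous $\varphi$, and $\Div_\tau(\nabla\varphi(\nu_E))=\kappa_E^\varphi$ by \eqref{09062025curv}, substituting into the previous display makes the two terms carrying $\varphi(\nu_E)\,\upsilon\,\kappa_E$ cancel, leaving $\frac{d}{dt}P_\varphi(E_t)|_{t=0}=\int_{\pa E}\upsilon\,\kappa_E^\varphi\,d\Ha^1$, which is the claim.

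I do not expect a serious obstacle here — the computation is entirely classical. The only two points requiring care are the rigorous justification of differentiation under the integral (which reduces to smooth dependence of ODE solutions on time plus the uniform $C^1$ bounds on $n(t,\cdot)$ and $J_t$ coming from the regularity of $E$, $\upsilon$ and $\eta$) and the bookkeeping of the sign in $\partial_t n|_{t=0}=-\nabla_\tau\upsilon$, which can be pinned down by the elementary check on an expanding circle ($\upsilon$ constant forces $\partial_t n=0$) or on a tilting line. It is precisely this routine and well-documented character that justifies omitting the proof in the main text.
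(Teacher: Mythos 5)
Your argument is correct and is exactly the classical first-variation computation that the paper invokes when it omits the proof as standard: write $P_\varphi(\Upsilon(t,E))=\int_{\pa E}\varphi(\nu_{E_t}\circ\Upsilon_t)\,J_t\,d\Ha^1$, differentiate under the integral using $\partial_t J_t|_{t=0}=\Div_\tau X=\upsilon\kappa_E$ and $\partial_t(\nu_{E_t}\circ\Upsilon_t)|_{t=0}=-\nabla_\tau\upsilon$, then integrate by parts via the tangential divergence theorem and use Euler's relation $\nabla\varphi(\nu_E)\cdot\nu_E=\varphi(\nu_E)$ to cancel the curvature terms. All the auxiliary steps you flag (differentiability in $t$ from $C^1$ dependence of the flow, the sign of $\partial_t n$, and the vanishing of $\nabla_\tau\upsilon\cdot\nu_E$) check out, so there is no gap.
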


\section{The first time step: regularity estimate}\label{sec:base}
We begin this section with the following technical lemma, which will be useful several times later on.

\begin{lemma}\label{lemm1}
    Let $K>0$, $\sigma_0>0$ and $E\in\mathfrak{C}^{2,\alpha}_{K,\sigma_0}(E_0)$ be such that $\vert E \vert=1$. Then there exist constants $\sigma,
    C$ depending only on $K$ such that the following holds: if $F \subset \R^2$ with $ \pa F = \{ x+ \psi(x)\nu_E(x)\colon x \in \pa E\}$ for some function $ \| \psi \|_{C^1(\pa E)} \leq \sigma$ with $ \vert F \vert=1$, then \begin{equation}\label{tesilemm}
        \frac{1}{C} \| \pa_{\pa E} \psi \|_{L^2(\pa E)} \leq  \| \pa_{\pa E} \xi_{F,E} \|_{L^2(\pa E)} \leq C \| \pa_{\pa E} \psi \|_{L^2(\pa E)}
    \end{equation}
    where $\xi_{F,E}$ is defined as in Lemma \ref{lem:l2dist}.
\end{lemma}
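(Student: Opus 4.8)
The plan is to relate $\xi_{F,E}$ and $\psi$ through the explicit formula $\xi_{F,E} = \psi + \tfrac{\psi^2}{2}\kappa_E$ established in \eqref{funzg_E(x)}, and then differentiate tangentially. Writing $\xi := \xi_{F,E}$, we have on $\pa E$
\[
\pa_{\pa E}\xi = \pa_{\pa E}\psi + \psi\kappa_E\,\pa_{\pa E}\psi + \tfrac{\psi^2}{2}\,\pa_{\pa E}\kappa_E = \big(1 + \psi\kappa_E\big)\pa_{\pa E}\psi + \tfrac{\psi^2}{2}\,\pa_{\pa E}\kappa_E.
\]
Since $E \in \mathfrak C^{2,\alpha}_{K,\sigma_0}(E_0)$, the curvature $\kappa_E$ and its tangential derivative $\pa_{\pa E}\kappa_E$ are controlled in terms of $K$ (and $E_0$); in particular $\|\kappa_E\|_{L^\infty(\pa E)} + \|\pa_{\pa E}\kappa_E\|_{L^\infty(\pa E)} \le C(K)$. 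Hence for $\sigma = \sigma(K)$ small enough, $\|\psi\|_{C^1(\pa E)}\le\sigma$ forces $\tfrac12 \le 1+\psi\kappa_E \le \tfrac32$ pointwise, which gives the \emph{upper} bound
\[
\|\pa_{\pa E}\xi\|_{L^2(\pa E)} \le \tfrac32 \|\pa_{\pa E}\psi\|_{L^2(\pa E)} + \tfrac{\sigma^2}{2}\,C(K)\,\mathcal H^1(\pa E)^{1/2} \le C(K)\|\pa_{\pa E}\psi\|_{L^2(\pa E)} + C(K)\sigma^2,
\]
and symmetrically, solving for $\pa_{\pa E}\psi = (1+\psi\kappa_E)^{-1}\big(\pa_{\pa E}\xi - \tfrac{\psi^2}{2}\pa_{\pa E}\kappa_E\big)$, the \emph{lower} bound
\[
\|\pa_{\pa E}\psi\|_{L^2(\pa E)} \le 2\|\pa_{\pa E}\xi\|_{L^2(\pa E)} + C(K)\sigma^2.
\]

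The remaining issue — and the one genuine obstacle — is that the inequalities above are not yet homogeneous: they carry additive error terms of size $C(K)\sigma^2$ coming from $\pa_{\pa E}\kappa_E$, whereas \eqref{tesilemm} asks for a clean two-sided comparison with no additive constant. This is exactly where the volume constraint $|F| = 1 = |E|$ enters. By the second identity in \eqref{eq:distance.and.area} we have $\int_{\pa E}\xi_{F,E}\,d\mathcal H^1 = |F| - |E| = 0$, so $\xi := \xi_{F,E}$ has zero average on $\pa E$. Therefore the Gagliardo--Nirenberg/Poincaré-type inequality \eqref{gn2} (with $j=0$, $m=1$, applied to $\xi$) yields $\|\xi\|_{L^2(\pa E)} \le C\|\pa_{\pa E}\xi\|_{L^2(\pa E)}$, and since $\|\psi - \xi\|_{L^\infty} = \|\tfrac{\psi^2}{2}\kappa_E\|_{L^\infty} \le C(K)\sigma\|\psi\|_{L^\infty} \le C(K)\sigma\|\xi\|_{L^\infty}$ (using again $\tfrac12\le 1+\psi\kappa_E\le\tfrac32$ to bound $\|\psi\|_\infty \le 2\|\xi\|_\infty$), the $L^2$ norms of $\psi$ and $\xi$ are comparable: $\tfrac12\|\xi\|_{L^2} \le \|\psi\|_{L^2} \le 2\|\xi\|_{L^2}$ for $\sigma$ small. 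Consequently the additive errors $C(K)\sigma^2 = C(K)\sigma \cdot \sigma \le C(K)\sigma\|\psi\|_{L^\infty}$... — more carefully, one estimates the error term directly as $\tfrac{\sigma}{2}\|\psi\|_{L^\infty}\|\pa_{\pa E}\kappa_E\|_{L^\infty}\mathcal H^1(\pa E)^{1/2}$, bounds $\|\psi\|_{L^\infty}$ crudely by $\sigma$, and observes that this is absorbable only after passing through the Poincaré inequality. The cleanest route is: first prove $\|\pa_{\pa E}\xi\|_{L^2} \ge \tfrac12\|\pa_{\pa E}\psi\|_{L^2} - C(K)\sigma^2$; then use $\|\psi\|_{L^\infty}\le\sigma$ to note $C(K)\sigma^2 = C(K)\sigma\|\psi\|_{L^\infty}$ is itself $\le C(K)\sigma\,C\|\pa_{\pa E}\psi\|_{L^2}$ after invoking \eqref{gn2} for $\psi$ — but $\psi$ need not have zero average.

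To fix that last gap, I would instead apply the zero-average Poincaré inequality to $\xi$ (which does have zero average), write the error term as $\le C(K)\|\psi\|_{L^\infty}\cdot\|\psi\|_{L^2}\le C(K)\sigma\|\psi\|_{L^2} \le 4 C(K)\sigma\|\xi\|_{L^2} \le 4C(K)C\,\sigma\|\pa_{\pa E}\xi\|_{L^2}$, and then for $\sigma=\sigma(K)$ small enough absorb this term into the left-hand side. This yields $\|\pa_{\pa E}\psi\|_{L^2(\pa E)} \le C(K)\|\pa_{\pa E}\xi\|_{L^2(\pa E)}$, and the reverse inequality $\|\pa_{\pa E}\xi\|_{L^2(\pa E)}\le C(K)\|\pa_{\pa E}\psi\|_{L^2(\pa E)}$ follows by the same bootstrap, now bounding the error $\tfrac{\sigma^2}{2}C(K)\mathcal H^1(\pa E)^{1/2}$ by $C(K)\sigma\|\psi\|_{L^2}$ and, via the Poincaré inequality applied to $\psi - \fint\psi$ together with the identity $\int_{\pa E}(\psi + \tfrac{\psi^2}{2}\kappa_E) = 0$ (which pins down $\fint_{\pa E}\psi$ to be $O(\sigma\|\psi\|_{L^2})$), by $C(K)\sigma\|\pa_{\pa E}\psi\|_{L^2}$. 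Thus the whole argument is a differentiation of the algebraic identity $\xi = \psi + \tfrac{\psi^2}{2}\kappa_E$ followed by careful absorption of lower-order terms using the volume constraint through the Poincaré inequality; the main obstacle is precisely this bookkeeping of which quantity has zero average and making every error term absorbable by choosing $\sigma$ depending only on $K$.
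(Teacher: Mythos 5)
Your proposal follows essentially the same route as the paper's proof: differentiate the identity $\xi_{F,E}=\psi+\tfrac{\psi^2}{2}\kappa_E$, exploit that $\int_{\pa E}\xi_{F,E}\,d\mathcal H^1=0$ (the volume constraint) to apply the Poincar\'e inequality to $\xi_{F,E}$, and absorb the lower-order terms by choosing $\sigma=\sigma(K)$ small — this is exactly the argument in \eqref{050502025zoo}--\eqref{050525dcaftesi}. The only caveat is your use of $\|\pa_{\pa E}\kappa_E\|_{L^\infty(\pa E)}\le C(K)$: the paper only relies on $\|\pa_{\pa E}\kappa_E\|_{L^2(\pa E)}\le C(K)$ (the bound actually available, via \eqref{eq:assumption.curvature}, in the situations where the lemma is applied), and your error terms can be handled with this weaker bound by writing $\|\tfrac{\psi^2}{2}\pa_{\pa E}\kappa_E\|_{L^2(\pa E)}\le\tfrac12\|\psi\|_{L^\infty(\pa E)}^2\|\pa_{\pa E}\kappa_E\|_{L^2(\pa E)}$ and controlling $\|\psi\|_{L^\infty(\pa E)}\le\sqrt2\,\|\xi_{F,E}\|_{L^\infty(\pa E)}\le C\|\pa_{\pa E}\xi_{F,E}\|_{L^2(\pa E)}$ through the one-dimensional Sobolev embedding combined with the same zero-average Poincar\'e inequality.
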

\begin{proof}
By Lemma \ref{lem:l2dist}, we have  
$ \xi_{F,E}= \psi + \frac{\psi^2}{2}\kappa_E$. 
For $\sigma$ sufficiently small, we obtain  
\begin{equation}\label{perc}
    \frac{\psi^2}{2} \leq \big(\psi+ \frac{\psi^2}{2}\kappa_E\big)^2= \xi_{E,F}^2 .
\end{equation}   
Computing the tangential gradient of $\xi_{F,E}$, we find that
\begin{equation}\label{050502025zoo}
    \pa_{\pa E} \xi_{F,E}= \pa_{\pa E} \psi+ \psi \kappa_E \pa_{\pa E} \psi+ \frac{\psi^2}{2} \pa_{\pa E}\kappa_E.
\end{equation}
Since $\vert E \vert = \vert F \vert$, we have $ \int_{\pa E} \xi_{F,E} \, d \mathcal{H}^1=0$ (see formula \eqref{eq:distance.and.area}). 
Therefore, using \eqref{perc}, \eqref{050502025zoo}, and the H\"{o}lder inequality, we obtain \begin{equation}\label{asdhhhhh}
    \begin{split}
        \| \pa_{\pa E} \xi_{F,E} \|_{L^2(\pa E)} &\leq (1+ \sigma K) \| \pa_{\pa E} \psi \|_{L^2(\pa E)}+ \frac{\sigma}{\sqrt{2}} \| \xi_{F,E} \|_{L^2(\pa E)}\\
        & \leq  (1+ \sigma K) \| \pa_{\pa E} \psi \|_{L^2(\pa E)}+ \frac{\sigma C_1}{\sqrt{2}} \| \pa_{\pa E} \xi_{F,E} \|_{L^2(\pa E)},
        \end{split}
    \end{equation}
    where in the last inequality we used the Poincar\'{e} inequality: $$\| \xi_{F,E} \|_{L^2(\pa E)} \leq C_1 \| \pa_{\pa E} \xi_{F,E} \|_{L^2(\pa E)}.$$ 
     Taking $\sigma$ small enough in \eqref{asdhhhhh}, we obtain  \begin{equation}\label{05052025primacaf1}
      \| \pa_{\pa E} \xi_{F,E} \|_{L^2(\pa E)} \leq C \| \pa_{\pa E} \psi \|_{L^2(\pa E)} . 
    \end{equation}
   From \eqref{050502025zoo} and using the Sobolev embedding together with the H\"{o}lder inequality, we get
    \begin{equation}\label{050502025dcaf}
        \begin{split}
            \| \pa_{\pa E} \psi \|_{L^2(\pa E)} &\leq \| \pa_{\pa E} \xi_{F,E} \|_{L^2(\pa E)} + \| \pa_{\pa E} \psi \|_{L^2(\pa E)} K \| \psi \|_{\infty}+ \frac{1}{2}\| \pa_{\pa E} \kappa_E \|_{L^2(\pa E)}  \| \psi^2 \|_{L^2(\pa E)}\\
            & \leq \| \pa_{\pa E} \xi_{F,E} \|_{L^2(\pa E)} + \sigma K\| \pa_{\pa E} \psi \|_{L^2(\pa E)}+ \sigma C(K)\| \xi_{F,E}\|_{L^2(\pa E)}\\
            & \leq \| \pa_{\pa E} \xi_{F,E} \|_{L^2(\pa E)} + \sigma K\| \pa_{\pa E} \psi \|_{L^2(\pa E)}+ \sigma C(K)C_1\| \pa_{\pa E} \xi_{F,E}\|_{L^2(\pa E)},
        \end{split}
    \end{equation}
    where the second inequality uses \eqref{perc}, and the third uses the Poincaré inequality. Taking $\sigma$ small enough in \eqref{050502025dcaf}, we get
    \begin{equation}\label{050525dcaftesi}
      \frac{1}{C} \| \pa_{\pa E} \psi \|_{L^2(\pa E)} \leq  \| \pa_{\pa E} \xi_{F,E} \|_{L^2(\pa E)}.   
    \end{equation}
    Combining  \eqref{05052025primacaf1} and \eqref{050525dcaftesi} yields the desired formula, i.e., \eqref{tesilemm}.
\end{proof}

The following theorem provides one of the key ingredients for proving the convergence of the discrete scheme. Its proof involves several delicate estimates and relies on the Euler–Lagrange equation.
\begin{theorem}\label{mainthm}
	Let $K>0$, $\sigma_0>0$ be fixed, and let $\delta_3$ be the constant given by Lemma \ref{lemmalambdaminz}. Then there exist constants $\delta_4=\delta_4(K)$, $\sigma_1= \sigma_1(K)$, $\hat h=\hat h(K) $ and $\hat K=C(K)$, such that the following holds. Fix $0<h<\hat h$ and let $E \subset \R^2$ be of class $C^4$ abd such that $E \in \mathfrak{H}^3_{K,\sigma_0}(E_0)$, with $|E|=1$, and assume that
    \begin{equation}\label{eq:assumption.curvature}
    \| \kappa_E \|_{L^\infty(\pa E)} +\|\pa_{\pa E} \kappa_E\|_{L^2(\pa E)}
     + \sqrt h
    \| \pa_{\pa E}^2\kappa_E \|_{L^2(\pa E)} \leq K.\end{equation} 
    Let $F$ be a minimizer of \begin{equation}\label{pbh}
		\min\big\{\mathcal F_{h}(A,E): A \in \mathcal{B}_{ \delta_4}(E),\, \vert A \vert=1\big\}.
	\end{equation}
    Then, $\partial F \Subset I_{ \delta_4}(\partial E)$ and $\pa F$ coincides with the graph of a smooth function $\psi : \partial E \rightarrow \R$ satisfying
	\begin{equation}\label{tesi16nov}
	\mathrm{dist}_{\mathcal{H}}(\pa F,\pa E)=\| \psi \|_{L^{\infty}(\pa E)} \leq  \hat K h, \quad \| \pa_{\pa E} \psi \|_{L^2(\pa E)} \leq  \hat K h.
	\end{equation}
    Moreover,
    \begin{equation}\label{tesi18nov}
    \begin{split}
      &\frac{\| \pa_{\pa E}^2\psi \|_{L^2(\pa E)}}
{\sqrt h} +  \| \pa_{\pa E}^3  \psi \|_{L^2(\pa E)} \leq  \hat K,
\\ 
&\| \kappa_F  \|_{L^\infty(\pa F)}+
     \| \pa_{F}\kappa_F \|_{L^2(\pa F)} 
     + \sqrt h
    \| \pa_{\pa F}^2\kappa_F \|_{L^2(\pa F)} \leq \hat K.
\end{split}
    \end{equation}
Finally, $ F \in \mathfrak{H}^3_{\hat{K},\sigma_1}(E_0)$.    
\end{theorem}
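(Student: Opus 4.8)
\emph{Overview and first bounds.} The plan is to extract preliminary $C^{1,\gamma}$-regularity and crude $h$-dependent bounds on the height function from the almost-minimality results already established, then to write the Euler--Lagrange equation of the volume-constrained minimizer, bootstrap it to smoothness, and finally run a chain of weighted energy estimates on differentiated forms of that equation, transferring the outcome to $\pa F$. Concretely: take $\delta_4\le\delta_3$. By Lemma \ref{lemmalambdaminz}, $F$ is an $(\omega_0,r_0,\tfrac12)$-almost minimizer of $P_\varphi$ and $\pa F$ is a $C^{1,\gamma}$ normal graph over $\pa E$ with height $\psi$; shrinking $\delta_4$ via Lemma \ref{propdadim} makes $\|\psi\|_{C^{1,\gamma}(\pa E)}$ as small as needed. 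Corollary \ref{importante} gives $d_{L^2}(F;E)\le 2\Lambda'h$, so Lemma \ref{lem:l2dist} and \eqref{fomr07112024} yield $\|\psi\|_{L^2(\pa E)}\le C(K)h$ and $\|\xi_{F,E}\|_{L^2(\pa E)}\le 2\Lambda'h$, while combining $\mathcal F_h(F,E)\le\mathcal F_h(E,E)=P_\varphi(E)$ with Lemma \ref{Lemma1006} gives the crude gradient bound $\|\pa_{\pa E}\psi\|_{L^2(\pa E)}\le C(K)\sqrt h$; one-dimensional interpolation (Proposition \ref{interptensor}) then gives $\|\psi\|_{L^\infty(\pa E)}\le C(K)h^{3/4}$. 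The Lagrange multiplier rule for \eqref{pbh}, using Proposition \ref{propEL} and the first variation of $P_\varphi$ (in the weak form available for the $C^{1,\gamma}$ set $F$ — a standard point, given the almost-minimality), produces $\lambda=\lambda_{F,h}\in\R$ with $\kappa_F^\varphi+\tfrac1h\,\xi_{F,E}\circ\pi_{\pa E}=\lambda$ on $\pa F$; via \eqref{interpol1} and $\xi_{F,E}=\psi+\tfrac12\psi^2\kappa_E$ this becomes, on $\pa E$, the quasilinear elliptic equation
\begin{equation}\label{eq:EL.plan}
-g(\nu_E)\,\pa_{\pa E}^2\psi+\tfrac1h\,\psi=\lambda-\kappa_E^\varphi-R_0-\tfrac1{2h}\kappa_E\psi^2,
\end{equation}
with $g(\nu_E)\ge J_\varphi>0$ (ellipticity from \eqref{remform}) and $R_0$ as in \eqref{interpol1}. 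Since $E\in C^4$, a Schauder bootstrap on \eqref{eq:EL.plan} gives $\psi\in C^\infty(\pa E)$, so $F$ is smooth; testing \eqref{eq:EL.plan} against constants and using $\int_{\pa E}\xi_{F,E}=|F|-|E|=0$ together with the $C^1$-closeness of $\pa F$ to $\pa E$ gives $|\lambda|\le C(K)$.

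\emph{Sharp estimates on $\psi$.} Differentiating \eqref{eq:EL.plan} once and testing with $\pa_{\pa E}\psi$ — integrating by parts, using $\|\pa_{\pa E}\kappa_E^\varphi\|_{L^2}\le C(K)$ (a consequence of $\|\kappa_E\|_\infty+\|\pa_{\pa E}\kappa_E\|_{L^2}\le K$), the smallness of $\|\psi\|_{C^1}$ and of $\|\psi\|_\infty=O(h^{3/4})$ to control the nonlinear and the $\tfrac1h$-weighted terms, and the structure $a(\cdot,0,0)=b(\cdot,0,0)=c(\cdot,0,0,\cdot)=0$ of $R_0$ to absorb the top-order $\pa_{\pa E}^2\psi,\pa_{\pa E}^3\psi$ contributions into the coercive term $\int g(\nu_E)(\pa_{\pa E}^2\psi)^2$ — one arrives at
\begin{equation}\label{eq:quad.plan}
\int_{\pa E}g(\nu_E)(\pa_{\pa E}^2\psi)^2+\tfrac1h\|\pa_{\pa E}\psi\|_{L^2(\pa E)}^2\le C(K)\|\pa_{\pa E}\psi\|_{L^2(\pa E)}+C(K)h.
\end{equation}
Reading \eqref{eq:quad.plan} as a quadratic inequality in $\|\pa_{\pa E}\psi\|_{L^2(\pa E)}$ gives simultaneously $\|\pa_{\pa E}\psi\|_{L^2(\pa E)}\le C(K)h$ and $\|\pa_{\pa E}^2\psi\|_{L^2(\pa E)}\le C(K)\sqrt h$; interpolating once more upgrades $\|\psi\|_{L^\infty(\pa E)}\le C(K)h$, which with $\mathrm{dist}_{\mathcal{H}}(\pa F,\pa E)=\|\psi\|_{L^\infty(\pa E)}$ proves \eqref{tesi16nov}. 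Differentiating \eqref{eq:EL.plan} twice and testing with $\pa_{\pa E}^2\psi$, the only forcing term that is not $O(1)$ is $\pa_{\pa E}^2\kappa_E^\varphi$, controlled by $C(K)/\sqrt h$ — this is precisely where $\sqrt h\,\|\pa_{\pa E}^2\kappa_E\|_{L^2}\le K$ enters; combined with $\|\pa_{\pa E}^2\psi\|_{L^2}\le C(K)\sqrt h$ from the previous step this yields $\int g(\nu_E)(\pa_{\pa E}^3\psi)^2+\tfrac1h\|\pa_{\pa E}^2\psi\|_{L^2}^2\le C(K)$ (hence $\|\pa_{\pa E}^3\psi\|_{L^2(\pa E)}\le C(K)$), and, reading the twice-differentiated equation directly, $\sqrt h\,\|\pa_{\pa E}^4\psi\|_{L^2(\pa E)}\le C(K)$, closed by a short bootstrap; this establishes the first line of \eqref{tesi18nov}.

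\emph{Curvature of $F$ and conclusion.} Feeding the bounds on $\psi$ into \eqref{nu_Fexp}, \eqref{eq:jacobian}, \eqref{interpol1} and Lemma \ref{eq:transform.gradient}, and writing $\kappa_F=\kappa_F^\varphi/g(\nu_F)$ with $g$ smooth and bounded below, transfers everything to $\pa F$: from $\kappa_F^\varphi\circ\Psi=-g(\nu_E)\pa_{\pa E}^2\psi+\kappa_E^\varphi+R_0$, the control of $\|\pa_{\pa E}^j\psi\|_{L^2}$ for $j=2,3,4$, and \eqref{eq:assumption.curvature}, one obtains $\|\kappa_F\|_{L^\infty(\pa F)}+\|\pa_{\pa F}\kappa_F\|_{L^2(\pa F)}+\sqrt h\,\|\pa_{\pa F}^2\kappa_F\|_{L^2(\pa F)}\le C(K)$, the second line of \eqref{tesi18nov}. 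Finally, composing the normal graph of $\pa F$ over $\pa E$ with that of $\pa E$ over $\pa E_0$ (both with small height) and reparametrizing via the implicit function theorem, $\pa F=\{y+\varphi_F(y)\nu_{E_0}(y):y\in\pa E_0\}$ with $\|\varphi_F\|_{H^3(\pa E_0)}\le C(\|\varphi_E\|_{H^3(\pa E_0)}+\|\psi\|_{H^3(\pa E)})\le C(K)$ and $\|\varphi_F\|_{L^\infty(\pa E_0)}\le\sigma_0+C(K)h\le\sigma_1$, so $F\in\mathfrak{H}^3_{\hat K,\sigma_1}(E_0)$ with $\hat K=C(K)$ and $\sigma_1=\sigma_1(K)$, provided $h<\hat h(K)$ is small enough for all the smallness requirements above (in particular $C(K)h<\delta_4$).

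\emph{Main obstacle.} The crux is the sharp-estimate step: one must track the powers of $h$ precisely, exploit the cancellation in \eqref{eq:EL.plan} between $\tfrac1h\psi$ and $\kappa_E^\varphi-\lambda$ (a naive termwise bound loses a full power of $h$), and use the vanishing of $a,b,c$ at $\psi=0$ to absorb the highest-order derivatives into the coercive term; the mismatch between the $O(1)$ control of $\pa_{\pa E}\kappa_E$ and the $O(1/\sqrt h)$ control of $\pa_{\pa E}^2\kappa_E$ is exactly what forces the $\sqrt h$ weights in \eqref{tesi18nov}. Deriving \eqref{eq:EL.plan} rigorously for the merely $C^{1,\gamma}$ minimizer (before the regularity bootstrap) is a secondary technical point, handled through the weak first variation and almost-minimality.
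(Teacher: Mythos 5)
Most of your plan runs parallel to the paper's proof (almost-minimality $\Rightarrow$ $C^{1,\gamma}$ normal graph, the Euler--Lagrange identity $\kappa_F^\varphi+\frac1h\xi_{F,E}\circ\pi_{\pa E}=\lambda$, then weighted energy estimates on its tangential derivatives), and your reshuffled order of the energy estimates — first derivative tested against $\pa_{\pa E}\psi$ to get $\|\pa_{\pa E}\psi\|_{L^2}\leq Ch$ and $\|\pa^2_{\pa E}\psi\|_{L^2}\leq C\sqrt h$ simultaneously, then the second-derivative test for $\|\pa^3_{\pa E}\psi\|_{L^2}\leq C$ — is plausibly workable and would deliver \eqref{tesi16nov} and the first line of \eqref{tesi18nov}. (Two secondary remarks: your bound on the Lagrange multiplier by ``testing against constants'' is glossed, since $\int_{\pa E}R_0$ contains $\int a\,\pa^2_{\pa E}\psi$, which is not yet controlled at that stage — the paper instead uses the anisotropic Gauss--Bonnet lemma together with the isoperimetric inequality; this can be repaired by keeping an $\eps\|\pa^2_{\pa E}\psi\|_{L^2}$ term and absorbing it later, but as written it is incomplete. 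Also, a Schauder bootstrap cannot give $\psi\in C^\infty$ when $E$ is only $C^4$; the paper only claims $\psi$ as regular as $E$.)

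The genuine gap is in your last step, the second line of \eqref{tesi18nov}. You propose to obtain $\sqrt h\,\|\pa^2_{\pa F}\kappa_F\|_{L^2(\pa F)}\leq \hat K$ by differentiating the graph formula \eqref{interpol1} and feeding in bounds on $\pa^j_{\pa E}\psi$ up to $j=4$, with $\sqrt h\,\|\pa^4_{\pa E}\psi\|_{L^2}\leq C$ read off from the twice-differentiated Euler--Lagrange equation. This fails under the stated hypotheses: differentiating \eqref{interpol1} (equivalently \eqref{eqeulerobvera}) twice produces $\pa_{\tau}^2 R_0$, which contains the term $b(\nu_E,\psi\kappa_E,\pa_\tau\psi)\,\psi\,\pa_\tau^3\kappa_E$, and the third tangential derivative of $\kappa_E$ is controlled by nothing in \eqref{eq:assumption.curvature} (indeed, for $E$ of class $C^4$ with $E\in\mathfrak H^3_{K,\sigma_0}(E_0)$ it need not even exist as an $L^2$ function with a quantitative bound). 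The same obstruction kills the auxiliary claim $\sqrt h\,\|\pa^4_{\pa E}\psi\|_{L^2}\leq C$. The paper avoids exactly this: it extracts the second derivative of the curvature from the Euler--Lagrange identity in the form $\pa^2_{\pa F}\kappa^\varphi_F=-\frac1h\,\pa^2_{\pa F}(\xi_{F,E}\circ\pi_{\pa E})$, whose right-hand side involves only $\pa^2_{\pa E}\xi_{F,E}$ (hence at most $\pa^2_{\pa E}\kappa_E$, multiplied by $\psi^2=O(h^2)$, and $\pa_{\pa E}\kappa_E\,\pa_{\pa E}\psi$) together with first and second derivatives of the projection $\pi_{\pa E}$ (which involve only $B_E$ and $\pa_{\pa E}\kappa_E$), all of which are admissible under \eqref{eq:assumption.curvature} and \eqref{tesi16nov}. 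To close your argument you must replace the graph-formula route for the top curvature estimate by this Euler--Lagrange route (or otherwise show how to avoid any occurrence of $\pa^3_\tau\kappa_E$).
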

\begin{proof}
    We divide the proof into several steps. \\
    \textit{Step 1: The Euler--Lagrange equation and first consequences.}
    
Fix $\eta>0$, to be chosen later.  By Lemmas \ref{propdadim} and \ref{lemmalambdaminz}, there exist $\delta_4 \leq \delta_3$ and a function $\psi \in C^1(\pa E)$ with $ \| \psi \|_{C^1(\pa E)} \leq \eta$ and such that
    $$\pa F= \{ x + \psi(x) \nu_{E}(x) \colon x \in \pa E\}.$$ Using the minimality of $F$, we have
     $ P_\varphi(F) \leq P_\varphi(E)$. Therefore, by \eqref{lemmatesi1}, it follows that
    \begin{equation}\label{120625form1}
       \frac{J_\varphi}{2} \|\pa_{\tau} \psi \|^2_{L^2(\pa E)} \leq \Lambda d_{L^2}(F;E),
    \end{equation}
    where $\Lambda$ is the constant from Lemma \ref{Lemma1006}.
     From formulas  \eqref{L^2distperlafunz}, \eqref{fomr07112024} and \eqref{formuimp}, we obtain 
    \begin{equation}\label{112024fomrimm}
       \frac{1}{\sqrt{2}} \| \psi \|_{L^2(\pa E)} \leq d_{L^2}(F;E) \leq 2\Lambda' h,
    \end{equation}
    where $\Lambda'$ is the constant appearing in \eqref{formuimp}. Combining \eqref{formuimp}, \eqref{120625form1}, and the Sobolev embedding, we find that
    \begin{equation}\label{13112024sera1}
    \begin{split}
         \| \psi \|_{\infty} &\leq C(K) \big( \| \psi \|_{L^2(\pa E)}+ \| \pa_\tau \psi \|_{L^2(\pa E)} \big) \\
         &\leq C(K) \big( d_{L^2}(F;E)+ d_{L^2}(F;E)^\frac{1}{2}  \big) \leq C(K)(h+ \sqrt{h}) \\
         &\leq C(K)\sqrt{h}.
         \end{split}
    \end{equation} 
    By \eqref{13112024sera1}, for $ \hat h$ sufficiently small we have $ \pa F \Subset  I_{\delta_4}(\pa E)$, so that $F$ satisfies the Euler--Lagrange equation:
    \begin{equation}\label{eq1112}
        \kappa_F^{\varphi}(y)+ \frac{d_{L^2}(F;E)}{h}f\circ \pi_{\pa E}(y)=\lambda_F \quad \text{for all }y \in \pa F,
    \end{equation}
where $f\in L^2(\pa E)$ realizes the supremum in the definition of $d_{L^2}(F;E)$, and $\lambda_F$ is a Lagrange multiplier. 
 Since $F$ is a normal graph over $\pa E$, using \eqref{funzg_E(x)}, we can rewrite \eqref{eq1112} using \eqref{funzg_E(x)} as
    \begin{equation}
        \kappa_F^{\varphi}(x+ \psi(x)\nu_E(x))+ \frac{\psi(x)+ \frac{\psi(x)^2 \kappa_E(x)}{2}}{h}= \lambda_F \quad \text{ for all }x \in \pa E.
    \end{equation}
    Since the anisotropy is smooth and uniformly convex, classical elliptic regularity theory implies that  $\psi \in C^4$, and hence $F$ is as regular as $E$.
   Integrating \eqref{eq1112}, we obtain
   \begin{equation}\label{13022025pom1}
    \begin{split}
        \lambda_F P_{\varphi}(F) &\leq  \int_{\pa F} \kappa_F^{\varphi} \varphi(\nu_F) \,d \mathcal{H}^1+ \frac{d_{L^2}(F;E)}{h} M_\varphi \int_{\pa E} \vert f \vert \vert J \Psi \vert \,d \mathcal{H}^1 \\
        &\leq c_\varphi + C(K,\varphi) \| f \|_{L^2(\pa E)} \| J \Psi \|_{L^2(\pa E)} \leq C(K,\varphi),
        \end{split}
    \end{equation}
    where 
$$ \Psi: \pa E \rightarrow \pa F, \qquad \Psi(x):= x+ \psi(x)\nu_E(x),  $$
 and we used $ \| f \|_{L^2(\pa E)} \leq 1$, $\|J \Psi \|_{L^2(\pa E)} \leq C(K)$  and the anisotropic Gauss--Bonnet theorem (Lemma \ref{lGaussB}).
Using \eqref{mMvarphi} and the isoperimetric inequality,
$$1=\vert F \vert^{\frac{1}{2}} \leq \frac{P(F)}{2 \sqrt{\pi}} \leq \frac{P_\varphi(F)}{m_\varphi2 \sqrt{\pi}},  $$
 where $m_\varphi$ is defined as in \eqref{mMvarphi}. Combining this with \eqref{13022025pom1}, we deduce that \begin{equation}\label{smoltiply}
        \lambda_F \leq \frac{C(K,\varphi)}{P_\varphi(F)}\leq C(K,\varphi).
    \end{equation}
Using formulas \eqref{funzg_E(x)}, \eqref{L^2distperlafunz}, and \eqref{interpol1}, the Euler--Lagrange equation on $\pa E$ becomes
\begin{equation}\label{eqeulerobvera}
        \frac{\psi(x)+ \frac{\psi^2(x)\kappa_E(x)}{2}}{h}=-\kappa_F^{\varphi} \circ \Psi (x) +\lambda_F = g(\nu_E(x)) \pa_{\tau}^2\psi(x)-\kappa_E^\varphi(x)-R_0(x)+\lambda_F,
    \end{equation}
    where 
 \begin{equation}\label{eqeulroresto}
     R_0=a(\nu_E,\psi\kappa_E,\pa_{\tau} \psi) \pa_{\tau}^2 \psi+ b(\nu_E,\psi \kappa_E,\pa_\tau\psi)\pa_{\tau} (\psi \kappa_E)+ c(\nu_E,\psi,\pa_{\tau} \psi,\kappa_E).
    \end{equation}
  We recall that $C_g = \sup_{ \eta \in \mathcal{S}^1} g(\eta)\geq\inf_{ \eta \in \mathcal{S}^1} g(\eta)=c_g>0$.
  By \eqref{smoltiply} and \eqref{eqeulerobvera}, we obtain
  \begin{equation}\label{asdasd1}
  \begin{split}
      c_g\| \pa_{\tau}^2 \psi \|_{L^2(\pa E)} &\leq C_g\| \kappa_E \|_{L^2(\pa E)}+ \| R_0 \|_{L^2(\pa E)}+ \frac{\|  \psi + \frac{\psi^2 \kappa_E}{2} \|_{L^2(\pa E)}}{h}+ \vert \lambda \vert \\
      &\leq C(K)+ \| R_0 \|_{L^2(\pa E)},
      \end{split}
  \end{equation}
  where we used the estimate
  \begin{equation}\label{eq:l2estimate}
      d_{L^2}(F;E) = \left\|  \psi + \frac{\psi^2 }{2} \kappa_E \right\|_{L^2(\pa E)} \leq C(K) h.
  \end{equation}
  We now estimate $R_0$. For  $\eta=\eta(K)$ small enough, we have $ \|a(\nu_E,\psi \kappa_E,\pa_{\tau} \psi) \|_{\infty} \leq \frac{c_g}{9}$. 
  By formula \eqref{13112024sera1}, we have $ \| \psi \|_{\infty} \leq C(K) \sqrt{h}$. Using that $ \| \psi \|_{C^1} \leq \eta$  and $a,\,b,\,c \in C^\infty$, we obtain 
  \begin{equation}\label{160625pom1}
  \begin{split}
      &\| a (\nu_E,\psi \kappa_E,\pa_{\tau} \psi) \pa_{\tau}^2 \psi \|_{L^2(\pa E)} \leq \frac{c_g}{9}\| \pa_{\tau}^2 \psi \|_{L^2(\pa E)},\\
      &\| b(\nu_E,\psi \kappa_E,\pa_{\tau} \psi) \pa_{\tau} (\psi \kappa_E)+ c(\nu_E,\psi,\pa_{\tau} \psi,\kappa_E) \|_{L^2(\pa E)} \leq C(K).
       \end{split}
  \end{equation}
  Plugging this into \eqref{asdasd1}, we conclude that
  \begin{equation}
     \| \pa_{\tau}^2 \psi \|_{L^2(\pa E)} \leq C(K). 
  \end{equation}
  By the Sobolev embedding theorem,
  \begin{equation}
      \| \psi \|_{H^2(\pa E )} \leq C(K) \implies \| \psi \|_{W^{1,\infty}(\pa E)} \leq C(K).
  \end{equation}
  We now estimate the second derivative of $\psi$ in $L^\infty$. 
  
  By \eqref{eqeulerobvera}, \eqref{smoltiply} and \eqref{13112024sera1}, we obtain
  \begin{equation}\label{13022025pom2}
  \begin{split}
      c_g\| \pa_{\tau}^2 \psi \|_{L^\infty(\pa E)} &\leq \frac{\| \psi + \frac{\psi^2 \kappa_E}{2}\|_{L^\infty(\pa E)}}{h}+ C_g\| \kappa_E \|_{L^\infty(\pa E)}+ \| R_0 \|_{L^\infty(\pa E)}+ \vert \lambda_F \vert \\
      &\leq  \frac{C(K)}{\sqrt{h}}+ C(K)+ \frac{c_g}{9} \| \pa_{\tau}^2 \psi \|_{L^\infty(\pa E)},
      \end{split}
  \end{equation}
  where we used \eqref{160625pom1} and
  \begin{equation} 
  \begin{split}
      \| R_0 &\|_{L^\infty(\pa E)} \\
      &\leq \frac{c_g}{9} \| \pa_{\tau}^2 \psi \|_{L^\infty(\pa E)}+ \| b \|_{L^\infty(\pa E)} \|  \pa_{\tau}(\psi \kappa_E) \|_{L^{\infty}(\pa E)}+ \| c \|_{L^\infty(\pa E)}\\
      & \leq  \frac{c_g}{9} \| \pa_{\tau}^2 \psi \|_{L^\infty(\pa E)}+ C(K)\big[ \| \pa_{\tau} \psi \|_{L^\infty(\pa E)} \| \kappa_F \|_{L^\infty(\pa E)}+ \| \psi \|_{L^\infty(\pa E)} \| \pa_{\tau} \kappa_E \|_{L^\infty(\pa E)}\big] +C(K)\\
      & \leq \frac{c_g}{9} \| \pa_{\tau}^2 \psi \|_{\infty}+ C(K) \big[ C(K)+ C(K) \sqrt{h} \frac{C(K)}{\sqrt{h}}\big]+ C(K) \leq \frac{c_g}{9} \| \pa_{\tau}^2 \psi \|_{\infty}+ C(K).
      \end{split}
  \end{equation}
  We conclude from \eqref{13022025pom2} that
  \begin{equation}\label{infitolapsi}
      \| \pa_{\tau}^2 \psi \|_{\infty} \leq \frac{C(K)}{\sqrt{h}}.
  \end{equation} 
  To summarize, in Step 1 we established the following estimate: there exists a constant $C = C(K)$ such that
  \begin{equation}\label{tesistep2}
      \frac1h \| \psi \|_{L^2(\pa E)} + \frac{1}{\sqrt h} \| \pa_{\tau} \psi \|_{L^2(\pa E)} + \| \pa_{\tau}^2  \psi\|_{L^2(\pa E)}+ \sqrt h \| \pa_{\tau}^2 \psi \|_{L^\infty(\pa E)}
    \leq C(K). 
  \end{equation}
  \textit{Step 2: Improving the bounds.} 
 We claim that
  \begin{equation}\label{tesistep3}
\begin{split}
    \| \pa_{\tau}^2 \psi \|_{L^2(\pa E)} \leq C(K) \sqrt{h},\quad \| \pa_{\tau}^3 \psi \|_{L^2(\pa E)} \leq C(K).
\end{split}
\end{equation}
To establish this, we multiply equation \eqref{eqeulerobvera} by $ \pa_{\tau}^4 \psi$ and integrate over $\pa E$, obtaining
  \begin{equation}\label{moltobene}
      \int_{\pa E}\frac{\psi \pa_{\tau}^4 \psi}{h}- \int_{\pa E} g(\nu_E)\pa_{\tau}^2 \psi \pa_{\tau}^4  \psi= \int_{\pa E} \frac{\psi^2 \kappa_E \pa_{\tau}^4 \psi}{2 h}-\int_{\pa E} \kappa_E^{\varphi} \pa_{\tau}^4  \psi + \int_{\pa E} (-R_0+\lambda_F)\pa_{\tau}^4  \psi. 
  \end{equation}
  For the left-hand side, integration by parts and the Cauchy–Schwarz inequality yield
  \[
  \begin{split}
   \int_{\pa E}\frac{\psi \pa_{\tau}^4 \psi}{h} - \int_{\pa E} g(\nu_E) \pa_{\tau}^2 \psi \pa_{\tau}^4 \psi 
  &= \int_{\pa E}\frac{ (\pa_{\tau}^2 \psi)^2}{h}+\int_{\pa E} g(\nu_E)( \pa_{\tau}^3 \psi )^2
  +
  \int_{\pa E}\pa_\tau g(\nu_E) \pa_{\tau}^2 \psi \pa_{\tau}^3\psi
  \\&
  \geq \frac{c_g}{2} \int_{\pa E} \vert \pa_{\tau}^3 \psi \vert^2+\left(\frac1h-C(K,\varphi)\right) \int_{\pa E} \vert \pa_{\tau}^2 \psi \vert^2.
  \end{split}
  \]
  Thus, for $h$ small enough, inequality \eqref{moltobene} becomes
  \begin{equation}\label{veform1}
      \frac{1}{2}\int_{\pa E} \frac{\vert\pa_{\tau}^2 \psi\vert^2}{h}+ \frac{c_g}{2}\int_{\pa E} \vert \pa_{\tau}^3 \psi \vert^2 \leq  \int_{\pa E} \frac{\psi^2 \kappa_E \pa_{\tau}^4 \psi}{2 h}-\int_{\pa E} \kappa_E^{\varphi} \pa_{\tau}^4 \psi - \int_{\pa E} R_0 \pa_{\tau}^4 \psi. 
  \end{equation}
  Let $\varepsilon>0$ be some constant that will be chosen later. To estimate the first term on the right-hand side of \eqref{veform1}, we integrate by parts and apply Young's inequality:
  \begin{equation}\label{veform3}
  \begin{split}
      \int_{\pa E} \frac{\psi^2 \kappa_E^{\varphi} \pa_{\tau}^4 \psi}{2 h}& = -\int_{\pa E} \psi^2\frac{ \pa_{\tau} \kappa_E^\varphi \pa_{\tau}^3 \psi}{2 h}- \int_{\pa E}\psi \kappa_E^\varphi  \frac{\pa_{\tau}\psi \pa_{\tau}^3 \psi}{h} \\
      &\leq \varepsilon \int_{\pa E} \vert \pa^3_{\tau}\psi \vert^2 + \frac{C(K)}{\varepsilon}\int_{\pa E} \vert \pa_{\tau}  \kappa_E^\varphi \vert^2+ \varepsilon \int_{\pa E} \vert \pa_{\tau}^3 \psi \vert^2 + \frac{C(K)}{\varepsilon} \int_{\pa E} \frac{\psi^2 (\pa_{\tau} \psi)^2}{h^2}\\
      &\leq 2\varepsilon \int_{\pa E} \vert \pa^3_{\tau} \psi \vert^2 + \frac{C(K)}{\varepsilon},
      \end{split}
  \end{equation}
  where we have used the estimates $ \| \psi \|_{L^2(\pa E)} \leq C(K) h$ and $ \| \pa_{\tau} \psi \|_{L^2(\pa E)} \leq C(K)\sqrt{h}$.
 For the second term, we estimate:
  \begin{equation}\label{veform2}
  \begin{split}
      \int_{\pa E} \kappa_E^{\varphi} \pa_{\tau}^4 \psi = \int_{\pa E} - \pa_{\tau} \kappa_E^{\varphi} \pa_{\tau}^3 \psi &\leq \varepsilon \int_{\pa E} \vert \pa_{\tau}^3 \psi \vert^2 + \frac{1}{\varepsilon} \int_{\pa E} \vert \pa_{\tau} \kappa_E^{\varphi} \vert^2 \\
      & \leq \varepsilon \int_{\pa E} \vert \pa_{\tau}^3 \psi \vert^2 + \frac{C(K)}{\varepsilon}.
      \end{split}
  \end{equation}
 The last term to estimate is $\int_{\pa E} R_0 \pa^4_{\tau} \psi$. It holds that
 \begin{equation}\label{veform4}
    \left| \int_{\pa E} R_0 \pa_{\tau}^4 \psi \right|\leq C(K)+(\frac{c_g}{9}+3\varepsilon) \int_{\pa E} \vert \pa_{\tau}^3 \psi \vert^2 + \frac{C(K)}{\sqrt{h}} \int_{\pa E} \vert \pa_{\tau}^2 \psi \vert^2.
 \end{equation}
The proof of this inequality is somewhat lengthy and would interrupt the logical flow of the argument, so we defer it to the appendix (see Lemma \ref{lemmaapendicead}).
\\
Combining \eqref{veform1}, \eqref{veform3}, \eqref{veform2}, and \eqref{veform4}, we obtain, for $ \varepsilon$ and $\hat{h}$ sufficiently small,
\begin{equation}\label{formpreexm}
\begin{split}
    \frac{1}{4h} \int_{\pa E} \vert \pa_{\tau}^2 \psi \vert^2 +\frac{c_g}{2}\int_{\pa E} \vert \pa_{\tau}^3 \psi \vert^2 &\leq \frac{ C(K)}{\sqrt h} \int_{\pa E} \vert \pa_{\tau}^2 \psi \vert^2 + (\frac{c_g}{9}+6 \varepsilon)\int_{\pa E} \vert \pa_{\tau}^3 \psi \vert^2 + C(K).
\end{split}
\end{equation}
Hence, by this and \eqref{tesistep2}, 
we have our claim, i.e., \eqref{tesistep3}. We observe that this claim gives the first inequality of \eqref{tesi18nov}. \\
\textit{Step 3: The bound on the third derivative yields an improved estimate on the first derivative.}

The goal of this step is to establish inequality \eqref{tesi16nov}.
Using the Sobolev embedding, we obtain:
\begin{equation}
    {\rm dist}_{\mathcal{H}}(\pa F, \pa E) = \| \psi \|_{L^\infty(\pa E )} \leq C(K) \big(   \| \psi \|_{L^2(\pa E)}+ \| \pa_\tau \psi \|_{L^2(\pa E)} \big) .
\end{equation}
Combining this with \eqref{tesistep2}, we get:
\begin{equation}
  {\rm dist}_{\mathcal{H}}(\pa F, \pa E) = \| \psi \|_{L^\infty(\pa E )} \leq C(K) \big( h+\| \pa_\tau \psi \|_{L^2(\pa E)} \big) . 
\end{equation}
Therefore, it remains to estimate $ \| \pa_\tau \psi \|_{L^2(\pa E)} $. Differentiating the equation \eqref{eqeulerobvera} once, we obtain:
\begin{equation}\label{asdasd123asd}
    \frac{\pa_{\tau} \psi}{h}+ \frac{\psi^2 \pa_{\tau} \kappa_E}{2h}+ \frac{\kappa_E\psi \pa_{\tau} \psi}{h}= \pa_\tau\big( g(\nu_E) \big)\pa^2_\tau\psi+g(\nu_E)\pa_{\tau}^3 \psi -\pa_{\tau} \kappa_E^\varphi- \pa_{\tau} R_0 \quad\text{ on } \pa E.
\end{equation}
Multiplying this equation by $ \pa_{\tau} \psi$, integrating over $\pa E$, using \eqref{tesistep3},  the fact that $g$ is smooth and the Sobolev embedding, we obtain
\begin{equation}\label{nablanabla}
    \begin{split}
        \frac{\| \pa_{\tau} \psi \|_{L^2(\pa E)}^2}{h} \leq& \| \pa_{\tau}^3 \psi \|_{L^2(\pa E)} \| \pa_{\tau} \psi \|_{L^2(\pa E)}+ \| \pa_{\tau} \kappa_{E}^\varphi \|_{L^2(\pa E)} \| \pa_{\tau} \psi \|_{L^2(\pa E)}\\
        &+ \| \pa_\tau \big( g(\nu_E) \big) \|_{L^\infty(\pa E)} \| \pa_\tau^2 \psi \|_{L^2(\pa E)} \| \pa_\tau \psi \|_{L^2(\pa E)}                  \\
        &+ \| \pa_{\tau}\kappa_E \|_{L^2(\pa E)} \| \pa_{\tau} \psi \|_{L^2(\pa E)} \frac{\| \psi \|_{L^\infty(\pa E)}^2}{h}\\
        & + \frac{\| \psi \|_{L^2(\pa E)}}{h} \| \pa_{\tau} \psi\|_{L^\infty(\pa E)} \| \kappa_{E}\|_{L^\infty(\pa E)} \| \pa_{\tau}\psi \|_{L^2(\pa E)}\\
        &+ \| \pa_{\tau} R_0 \|_{L^2(\pa E)} \| \pa_{\tau} \psi \|_{L^2(\pa E)}\\
         \leq & \| \pa_{\tau} \psi \|_{L^2(\pa E)} \big(C(K)+\| \pa_{\tau} R_0 \|_{L^2(\pa E)}  \big).
    \end{split}
\end{equation}
Since $\| \pa_{\tau} R_0 \|_{L^2(\pa E)}$ is bounded (see \eqref{formutileoggi.1}), inequality \eqref{nablanabla} implies
\begin{equation*}
    \| \pa_{\tau} \psi \|_{L^2(\pa E)} \leq C(K) h,
\end{equation*}
which is precisely the second inequality in \eqref{tesi16nov}.\\
\textit{Step 4: Conclusions: The curvature estimate and $F \in \mathfrak{H}^3_{\hat{K},\sigma_1}(E_0)$.}         

In this final step, we complete the proof of Theorem \ref{mainthm} by establishing the inequalities in the second line of \eqref{tesi18nov}. We begin by noting some immediate consequences of the inequalities in \eqref{tesi16nov} and the first two inequalities in \eqref{tesi18nov}. From the Euler–Lagrange equation \eqref{eq1112}, we immediately deduce that
\[
\|\kappa^\varphi_{ F}\|_{L^\infty(\pa F)}\leq \lambda_F + \frac{1}{2h}\left\|2\psi+ \psi^2 \kappa_E \circ \pi_{\pa E}\right\|_{L^\infty(\pa F)} \leq C(K)+ C(K) \frac{\| \psi \|_{L^\infty(\pa E)}}{h} \leq C(K).
\]
To estimate the first derivative of the curvature, we differentiate the Euler–Lagrange equation \eqref{eq1112}. Recall  that 
$$f = \frac{2\psi+  \psi^2\kappa_E}{2d_{L^2}(F;E)} $$ 
and $x = \pi_{\pa E}(x+ \psi(x)\nu_E(x))$ for all $x \in \pa E$. Applying \eqref{ruffini} with $G= f \circ \pi_{\pa E}$, we obtain
\begin{equation}\label{eq:first.der.curv}
    \begin{split}
        \int_{\pa F} \vert \pa_{\pa F} \kappa_F^{\varphi} \vert^2  &= \frac{d_{L^2}^2(F;E)}{h^2} \int_{\pa F} \vert \pa_{\pa F} (f \circ \pi_{\pa E} )\vert^2 =  \frac{d_{L^2}^2(F;E)}{h^2} \int_{\pa E} \frac{\vert \pa_{\pa E} f \vert^2}{ \sqrt{(1+ \psi \kappa_E)^2+ \vert \pa_{\pa E} \psi \vert^2}}    \\
        &\leq  \frac{d_{L^2}^2(F;E)}{h^2} C(K)\int_{\pa E} \vert \pa_{\pa E} f \vert^2   \leq C(K) \frac{\| \pa_{\pa E}\psi \|_{L^2(\pa E)}^2+ \| \psi \|_{L^\infty(\pa E)}^2}{h^2} \leq C(K),
    \end{split}
\end{equation}
where in the final step we again used \eqref{tesi16nov} and the assumption \eqref{eq:assumption.curvature}.
To complete the estimate for the derivative of the curvature, we differentiate the identity in Remark \ref{remarchinoKvar}, obtaining
\[
\pa_{\pa F} \kappa_{F}^\varphi=
g(\nu_F) \pa_{\pa F} \kappa_F +\nabla g(\nu_F) \nabla\nu_F \tau_F \kappa_F= g(\nu_F) \pa_{\pa F} \kappa_F + (\pa_{\pa F}g )(\nu_F)\kappa_F^2.
\]
It follows that
\[
\frac1c \int_{\pa F} \vert \pa_F \kappa_F \vert^2 \leq \int_{\pa F} \vert g(\nu_F) \pa_F \kappa_F \vert^2 \leq \int_{\pa F} \vert \pa_F \kappa_F^\varphi \vert^2
+C(\varphi,K) \int_{\pa F} \kappa_F^4
\leq C(K),
\]
where the second inequality uses the smoothness of $g$, and the final bound follows from \eqref{eq:first.der.curv} and \eqref{eq:assumption.curvature}. To estimate the second derivative of the curvature, we differentiate the Euler–Lagrange equation \eqref{eq1112} twice, obtaining
\[
\pa^2_{\pa F} \kappa_F^{\varphi} +\frac{d_{L^2}(E;F)}{h} \pa_{\pa F}^2 
( f\circ \pi_{\pa E})=0.
\]
We define $\xi:= \psi + \frac{\psi^2}{2}\kappa_E$, so that $f= \frac{\xi}{d_{L^2}(F;E)}$. Applying the chain rule and using Lemma \ref{lem:l2dist}, we find
\begin{equation}\label{eq:anisotropic.curv.0}
\begin{split}
\| &\pa^2_{\pa F} \kappa_F^{\varphi} \|_{L^2(\pa F)}^2
\leq \frac{1}{h^2} 
\int_{\pa F} \left( | (\pa_{\pa E}^2 \xi) \circ \pi_{\pa E}  | |\pa_{\pa F} \pi_{\pa E}|^2 
+| (\pa_{\pa E} \xi ) \circ \pi_{\pa E}  | |\pa_{\pa F} \pi_{\pa E}| |\pa^2_{\pa F}\pi_{\pa E}|\right)^2 
\\
&=\frac{1}{h^2} 
\int_{\pa E}\left(|(\pa_{\pa E}^2 \xi )  | |(\pa_{\pa F} \pi_{\pa E} )\circ \pi^{-1}_{\pa E}|^2
+ |(\pa_{\pa E} \xi )  | |(\pa_{\pa F} \pi_{\pa E}) \circ \pi_{\pa E}^{-1}|  |\pa^2_{\pa F}\pi_{\pa E} \circ \pi_{\pa E}^{-1}|\right)^2 J_{F,E}
\\
&= \frac{1}{h^2} (I_1+I_2),
\end{split}
\end{equation}
where $I_1$ and $I_2$ are defined in the obvious way, and where $J_{F,E}:= \sqrt{ (1+ \psi \kappa_E)^2+ \vert \pa_{\pa E} \psi \vert^2}$.
If we prove that $|I_1|,|I_2|\leq C(k)h$, 
then \eqref{tesi18nov} follows. To proceed, we differentiate the identity from Remark \ref{remarchinoKvar} twice, yielding
\[
\pa^2_{\pa F} \kappa_{F} = \frac{1}{g(\nu_F)} (\pa^2_{\pa F} \kappa^\varphi_{F} 
-\pa_{\pa F}^2 g (\nu_F) \kappa_F - 2 \kappa_F \pa_{\pa F} g (\nu_F)\pa_{\pa F} \kappa_F).
\]
Since $g$ is bounded from below, the previous estimates on the curvature suffice to show that
\begin{equation} \label{eq:curv.anisotropic}
 \|\pa^2_{\pa F} \kappa_{F}\|_{L^2(\pa F)}^2 \leq C(K)(
\|\pa^2_{\pa F} \kappa^\varphi_{F}\|_{L^2(\pa F)}^2 +1).   
\end{equation}
We now estimate $I_1$; the estimate for $I_2$ follows by a similar argument.

Using the explicit expression for $\xi$ and \eqref{tesilemm}, we obtain
\begin{equation}\label{eq:estimate.I1}
    \begin{split}
|I_1|\leq& C(K)\int_{\pa E}|(\pa_{\pa E}^2 \xi )  |^2|(\pa_{\pa F} \pi_{\pa E} )\circ \pi^{-1}_{\pa E} |^4 
\leq C(K)
\int_{\pa E}\left|\pa^2_{\pa E} \psi +\pa^2_{\pa E}(\kappa_{\pa E}\psi) \right|^2
\\
\leq &  C(K)(  h
+\int_{\pa E}|\pa_{\pa E}\kappa_{ E}|^2| \pa_{\pa E}\psi|^2 
+h^2|\pa^2_{\pa E}\kappa_{ E}|^2)
\\
\leq &  C(K)(h 
+C(K)h
+\|\pa_{\pa E}\kappa_{ E}\|_{L^2}^2 h),
\end{split}
\end{equation}
where we have used the following pointwise estimate (see \cite{JN}):
\begin{equation}\label{nablapaFpiE}
\begin{split}
    &\nabla_{\pa F} \pi_{\pa E}= \nabla \pi_{\pa E}-\nabla \pi_{\pa E} \nu_F \otimes \nu_F \text{ and }\\
    & \nabla \pi_{\pa E}=I -(\nu_{E} \circ \pi_{\partial E})\otimes(\nu_{ E} \circ \pi_{\pa E})-d_E (B_{E} \circ \pi_{\partial E} )(I+ d_E B_E \circ \pi_{\partial E})^{-1}\\
    &  \implies |(\pa_{\pa F} \pi_{\pa E} )\circ \pi^{-1}_{\pa E} | \leq C(K).
    \end{split}
\end{equation}
In the last inequality of \eqref{eq:estimate.I1}, we used the Sobolev embedding and the assumption \eqref{eq:assumption.curvature} and \eqref{tesi16nov}, which imply:
\[\begin{split}
\int_{\pa E}|\pa_{\pa E}\kappa_{ E}|^2| \pa_{\pa E}\psi|^2
&\leq \|\pa_{\pa E}\kappa_{ E} \pa_{\pa E}\psi \|_{L^\infty(\pa E)} 
\|\pa_{\pa E}\psi\|_{L^2(\pa E)}\|\pa_{\pa E}\kappa_E\|_{L^2(\pa E)}
\\
&\leq \frac{C(K)}{\sqrt{h}}\sqrt{h} \|\pa_{\pa E}\psi\|_{L^2(\pa E)}\|\pa_{\pa E}\kappa_E\|_{L^2(\pa E)} \\
& \leq C(K)h.
\end{split}
\]
Therefore, combining \eqref{eq:assumption.curvature} and \eqref{eq:estimate.I1}, we conclude that 
\[
|I_1|\leq C(K) h.
\]
Applying the same reasoning to $I_2$, we get
$$ \vert I_2 \vert \leq C(K) h .$$
The bounds on $I_1$ and $I_2$, together with \eqref{eq:anisotropic.curv.0} and\eqref{eq:curv.anisotropic},
conclude the proof of \eqref{tesi18nov}.
Finally, we observe that, using the Sobolev embedding, \eqref{interHOLDER}, and \eqref{tesi16nov}, \eqref{tesi18nov}, we obtain
$$  \| \psi \|_{C^{1,\frac{1}{4}}(\pa E)} \leq C(K)h^{\gamma} \text{ for some } \gamma>0. $$
Therefore, there exists $\sigma_1$ such that $F \in \mathfrak{H}^3_{\hat{K},\sigma_1}(E_0)$, which concludes the proof of Theorem \ref{mainthm}.
\end{proof}
\section{The Iteration procedure}
In this section, we prove a crucial iteration formula.  Let $A \in \mathfrak{C}^1_{M, \sigma_{E_0}}(E_0)$, for some $M>0$. If $f$ is a smooth function on $\pa A$, there exists a constant $C(M)$ such that 
\begin{equation}\label{eleminter}
    \| f \|_{L^\infty(\pa A)}^2 \leq C(M) \big( \frac{1}{\varepsilon} \| f \|_{L^2(\pa A)}^2+  \varepsilon \| \pa_{\pa A} f \|_{L^2(\pa A)}^2  \big)
\end{equation}
for every $\varepsilon \in (0,1)$.
Before explaining the iteration algorithm, we present a technical lemma that supports the iterative procedure.
\begin{lemma}\label{lemmaiter1}
    Let $F,\, E$ and $\psi: \pa E \rightarrow \R$ be as in Theorem \ref{mainthm}. There exists $C_0 = C_0(K)$ such that
\begin{equation}\label{tesilemmaiter1}
       (1-h C_0) \int_{\pa E} | \pa_{\tau} \xi \vert^2 +\frac{2}{3}h \int_{\pa E} g(\nu_E)\vert \pa^2_\tau \psi \vert^2  \leq -h \int_{\pa E} \pa_{\tau} \kappa_E^\varphi \pa_{\tau} \xi,
    \end{equation}
    where $\xi:=\psi+ \frac{ \kappa_E \psi^2}{2}$.
\end{lemma}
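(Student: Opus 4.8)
The plan is to test the Euler--Lagrange equation for $\psi$ against $-h\,\pa_\tau^2\xi$ and integrate over $\pa E$. Since $F$ is a normal graph over $\pa E$, the proof of Theorem~\ref{mainthm} gives (see \eqref{eqeulerobvera})
\[
\frac{\xi}{h}=g(\nu_E)\,\pa_\tau^2\psi-\kappa_E^\varphi-R_0+\lambda_F\qquad\text{on }\pa E,
\]
with $R_0$ as in \eqref{eqeulroresto} and $\lambda_F$ the Lagrange multiplier. Multiplying by $-h\,\pa_\tau^2\xi$ and integrating over the closed curve $\pa E$: the $\lambda_F$-term vanishes since $\int_{\pa E}\pa_\tau^2\xi=0$, while integration by parts turns $-\int_{\pa E}\xi\,\pa_\tau^2\xi$ into $\int_{\pa E}|\pa_\tau\xi|^2$ and $h\int_{\pa E}\kappa_E^\varphi\,\pa_\tau^2\xi$ into $-h\int_{\pa E}\pa_\tau\kappa_E^\varphi\,\pa_\tau\xi$. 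This yields
\begin{equation}\label{eq:iter1.id}
\int_{\pa E}|\pa_\tau\xi|^2+h\int_{\pa E}\pa_\tau\kappa_E^\varphi\,\pa_\tau\xi=-h\int_{\pa E}g(\nu_E)\,\pa_\tau^2\psi\,\pa_\tau^2\xi+h\int_{\pa E}R_0\,\pa_\tau^2\xi .
\end{equation}

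Next I would isolate the principal quadratic term on the right of \eqref{eq:iter1.id}. Differentiating $\xi=\psi+\frac12\kappa_E\psi^2$ twice gives $\pa_\tau^2\xi=(1+\psi\kappa_E)\,\pa_\tau^2\psi+\rho$ with $\rho:=\frac12\psi^2\,\pa_\tau^2\kappa_E+2\psi\,\pa_\tau\psi\,\pa_\tau\kappa_E+\kappa_E\,(\pa_\tau\psi)^2$; writing also $R_0=a\,\pa_\tau^2\psi+b\,\pa_\tau(\psi\kappa_E)+c$ and substituting, \eqref{eq:iter1.id} becomes
\begin{equation}\label{eq:iter1.split}
-h\int_{\pa E}\pa_\tau\kappa_E^\varphi\,\pa_\tau\xi=\int_{\pa E}|\pa_\tau\xi|^2+h\int_{\pa E}(g(\nu_E)-a)(1+\psi\kappa_E)\,|\pa_\tau^2\psi|^2+\mathcal E ,
\end{equation}
where $\mathcal E$ collects the three remaining, genuinely lower-order, integrals $h\int g\,\pa_\tau^2\psi\,\rho$, $-h\int(b\,\pa_\tau(\psi\kappa_E)+c)(1+\psi\kappa_E)\,\pa_\tau^2\psi$ and $-h\int R_0\,\rho$. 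For the middle term I would use that, by the choice of $\eta$ in Theorem~\ref{mainthm}, $\|a\|_{L^\infty(\pa E)}\leq c_g/9$, whence $g(\nu_E)-a\geq\frac89 g(\nu_E)$ pointwise; combined with $\|\psi\kappa_E\|_{L^\infty(\pa E)}\leq\hat K K h$, for $\hat h$ small this gives $(g(\nu_E)-a)(1+\psi\kappa_E)\geq\frac34 g(\nu_E)$ pointwise, so that
\[
h\int_{\pa E}(g(\nu_E)-a)(1+\psi\kappa_E)\,|\pa_\tau^2\psi|^2\geq\frac34\,h\int_{\pa E}g(\nu_E)\,|\pa_\tau^2\psi|^2 .
\]

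The core of the argument is the error bound
\begin{equation}\label{eq:iter1.err}
|\mathcal E|\leq h\,C_0\int_{\pa E}|\pa_\tau\xi|^2+\frac1{12}\,h\int_{\pa E}g(\nu_E)\,|\pa_\tau^2\psi|^2 ,\qquad C_0=C_0(K).
\end{equation}
The crucial observation is that $|F|=|E|=1$ forces $\int_{\pa E}\xi\,d\mathcal{H}^1=0$ by \eqref{eq:distance.and.area}, so Poincar\'e and the Sobolev embedding on $\pa E$ give $\|\xi\|_{L^\infty(\pa E)}\leq C(K)\|\pa_\tau\xi\|_{L^2(\pa E)}$; together with the pointwise bound $|\psi|\leq\sqrt2\,|\xi|$ of \eqref{perc} and with Lemma~\ref{lemm1} this yields
\[
\|\psi\|_{L^\infty(\pa E)}+\|\pa_\tau\psi\|_{L^2(\pa E)}\leq C(K)\,\|\pa_\tau\xi\|_{L^2(\pa E)} .
\]
Since $a,b,c$ vanish at the corresponding zero arguments and $g$ is smooth, every summand of $\mathcal E$ carries at least one factor $\psi$, $\pa_\tau\psi$ or $\psi\kappa_E$; estimating the surviving factors by the a priori bounds $\|\pa_\tau^2\psi\|_{L^2(\pa E)}\leq\hat K\sqrt h$ and $\|\pa_\tau^3\psi\|_{L^2(\pa E)}\leq\hat K$ of \eqref{tesi16nov}--\eqref{tesi18nov}, and by $\|\pa_\tau\kappa_E\|_{L^2(\pa E)}\leq K$, $\sqrt h\,\|\pa_\tau^2\kappa_E\|_{L^2(\pa E)}\leq K$ of \eqref{eq:assumption.curvature}, one reaches \eqref{eq:iter1.err}. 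I expect this bookkeeping to be the only genuine difficulty; its two delicate points are: $(i)$ each second tangential derivative of $\kappa_E$ must first be removed by an integration by parts, so that only $\pa_\tau\kappa_E$ (which is $O(1)$, not $O(h^{-1/2})$, in $L^2$) survives, the extra derivative landing on a $\psi$-factor and producing at worst $\pa_\tau^3\psi$; $(ii)$ a product carrying two ``$\pa_\tau^2\psi$-type'' factors is split by an $h$-weighted Young inequality, the resulting $\varepsilon h\int_{\pa E}g(\nu_E)|\pa_\tau^2\psi|^2$ being absorbed into the margin $\frac34-\frac23=\frac1{12}$ above, whereas a term of the form $\pa_\tau^2\psi\,(\pa_\tau\psi)^2$ is instead written as $\frac13\pa_\tau((\pa_\tau\psi)^3)$ and integrated by parts. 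Throughout, the explicit factor $h$ coming from \eqref{eq:iter1.id} is kept in front and no pure constant is ever isolated, so that each contribution lands either in $h\,C_0\int_{\pa E}|\pa_\tau\xi|^2$ or in $\varepsilon h\int_{\pa E}g(\nu_E)|\pa_\tau^2\psi|^2$; this is the same kind of computation carried out in Lemma~\ref{lemmaapendicead} of the appendix.

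Finally, inserting the lower bound for the middle term and \eqref{eq:iter1.err} into \eqref{eq:iter1.split} gives
\[
-h\int_{\pa E}\pa_\tau\kappa_E^\varphi\,\pa_\tau\xi\geq(1-hC_0)\int_{\pa E}|\pa_\tau\xi|^2+\Bigl(\frac34-\frac1{12}\Bigr)h\int_{\pa E}g(\nu_E)\,|\pa_\tau^2\psi|^2 ,
\]
and since $\frac34-\frac1{12}=\frac23$ this is precisely \eqref{tesilemmaiter1}, with $C_0=C_0(K)$.
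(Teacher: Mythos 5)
Your proposal is correct and follows essentially the same route as the paper: testing the Euler--Lagrange equation \eqref{eqeulerobvera} against $-h\,\pa_\tau^2\xi$ is, after one integration by parts, identical to the paper's procedure of differentiating the equation once and testing against $\pa_\tau\xi$, and both then extract the coercive term $\tfrac{8}{9}\int_{\pa E} g(\nu_E)|\pa_\tau^2\psi|^2$ (your grouping $(g-a)(1+\psi\kappa_E)$ versus the paper's separate treatment of the $a\,\pa_\tau^2\psi$ piece of $R_0$ is only a cosmetic difference) and absorb the remainders via Lemma \ref{lemm1}, the mean-zero property of $\xi$ with Poincar\'e--Sobolev, the a priori bounds of Theorem \ref{mainthm} and \eqref{eq:assumption.curvature}, and Young's inequality. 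The error bookkeeping you outline, including the treatment of $\pa_\tau^2\kappa_E$ and of the terms quadratic in $\pa_\tau^2\psi$, matches the estimates the paper carries out in \eqref{19061038} and \eqref{casa1}--\eqref{casa3}.
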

\begin{proof}
   We multiply \eqref{asdasd123asd} by $ \pa_{\tau} \xi$ to obtain
   \begin{equation}\label{gia3}
   \begin{split}
   \int_{\pa E}\frac{|\pa_{\tau} \xi|^2}{h}= -\int_{\pa E} \pa_{\tau} \kappa_E^\varphi \pa_{\tau} \xi + \int_{\pa E}\pa_\tau\big( g(\nu_E) \big)\pa^2_\tau\psi \pa_\tau \xi+\int_{\pa E}g(\nu_E) \pa_{\tau} \xi \pa_{\tau}^3 \psi -\int_{\pa E} \pa_{\tau} \xi \pa_{\tau} R_0 .
   \end{split}
   \end{equation}
   Integrating the above equation by parts once, we get
   \begin{equation}\label{1906estim1}
     \int_{\pa E}\frac{|\pa_{\tau} \xi|^2}{h} =  -\int_{\pa E} \pa_{\tau} \kappa_E^\varphi \pa_{\tau} \xi + \int_{\pa E}\pa_\tau\big( g(\nu_E) \big)\pa^2_\tau\psi \pa_\tau \xi - \int_{\pa E} \pa_\tau \big( g(\nu_E) \pa_\tau \xi  \big) \pa_\tau^2 \psi+ \int_{\pa E} \pa_\tau^2 \xi R_0. 
   \end{equation}
   We aim to estimate the following integrals:
   \begin{align}
       &\int_{\pa E}\pa_\tau\big( g(\nu_E) \big)\pa^2_\tau\psi \pa_\tau \xi, \label{1906form1}\\
       &\int_{\pa E} -\pa_\tau \big( g(\nu_E) \pa_\tau \xi  \big) \pa_\tau^2 \psi, \label{1906form2}\\
       &\int_{\pa E} \pa_\tau^2 \xi R_0 \label{1906form3} .
   \end{align}
Let $\varepsilon>0$ be fixed (to be chosen later). We now compute $\pa^2_\tau \xi$, which will be used several times in the sequel:
\begin{equation}
    \pa_\tau^2 \xi= \pa_\tau^2 \psi \big( 1 + \psi \kappa_E \big) + \pa_\tau^2 \kappa_E \frac{ \psi^2}{2}+ \pa_\tau \psi \pa_\tau \kappa_E \psi + \frac{\vert \pa_\tau \psi \vert^2}{2} \kappa_E.
\end{equation}
		\textit{Estimate of \eqref{1906form1}. }
 Since $g\in C^2$, \eqref{eq:assumption.curvature}, as well as the Cauchy–Schwarz and Young’s inequalities, we obtain
\begin{equation}\label{1906estim2}
           \int_{\pa E}\pa_\tau\big( g(\nu_E) \big)\pa^2_\tau\psi \pa_\tau \xi \leq C(K,\varepsilon) \| \pa_\tau \xi \|^2_{L^2(\pa E)}+  \varepsilon \| \pa_\tau^2 \psi \|_{L^2(\pa E)}^2. 
        \end{equation}
\textit{Estimate of \eqref{1906form2}. }        \\
By the Leibniz rule, we obtain
\begin{equation}\label{1906estim3}
    \int_{\pa E} -\pa_\tau \big( g(\nu_E) \pa_\tau \xi  \big) \pa_\tau^2 \psi= \int_{\pa E} -g(\nu_E )\pa_\tau^2 \xi \pa^2_\tau \psi - \pa_\tau \big( g(\nu_E)   \big)\pa_\tau \xi \pa_\tau^2 \psi.
\end{equation}
Using the regularity of $g$, formula \eqref{eq:assumption.curvature}, the Cauchy–Schwarz inequality, and Young's inequality, we deduce
\begin{equation}\label{1906estim4}
    \int_{\pa E} - \pa_\tau \big( g(\nu_E)   \big)\pa_\tau \xi \pa_\tau^2 \psi \leq C(K,\varepsilon) \| \pa_\tau \xi \|^2_{L^2(\pa E)}+ \varepsilon \| \pa_\tau^2 \psi \|_{L^2(\pa E)}^2.
\end{equation}
Using the definition of $\xi$, the regularity of $g$, formulas \eqref{eq:assumption.curvature}, \eqref{tesi16nov}, \eqref{tesi18nov}, the Cauchy–Schwarz and Young's inequalities, as well as the Sobolev embedding, we deduce
\begin{equation}\label{19061038}
\begin{split}
   &\int_{\pa E} -g(\nu_E )\pa_\tau^2 \xi \pa^2_\tau \psi= \int_{\pa E}- g(\nu_E) \big( \pa_\tau^2\psi( 1+ \psi \kappa_E)+ \pa_\tau^2 \kappa_E \frac{\psi^2}{2}+ \pa_\tau \psi \pa_\tau \kappa_E \psi+ \frac{\vert \pa_\tau^2 \psi \vert^2}{2} \kappa_E   \big) \pa_\tau^2 \psi\\
   & \leq -\frac{8}{9}\int_{\pa E} g(\nu_E) \vert \pa^2_\tau \psi \vert^2 + C(K) \| \pa_\tau^2 \kappa_E \|_{L^2(\pa E)} \| \psi \|_{L^\infty(\pa E)}^2 \| \pa_\tau^2 \psi \|_{L^2(\pa E)}\\
   & \quad C(K) \| \pa_\tau \kappa_E \|_{L^\infty(\pa E)} \| \psi \|_{L^\infty(\pa E)} \| \pa_\tau \psi \|_{L^2(\pa E)} \| \pa_\tau^2 \psi \|_{L^2(\pa E)} + C(K) \| \pa_\tau \psi \|_{L^2(\pa E)} \| \pa^2_\tau \psi \|_{L^2(\pa E)}\\
   & \leq -\frac{8}{9}\int_{\pa E} g(\nu_E) \vert \pa^2_\tau \psi \vert^2 \\
   & \qquad+ C(K)\bigg(\frac{h}{\sqrt{h}}  \| \psi \|_{H^1(\pa E)} \| \pa_\tau^2 \psi \|_{L^2(\pa E)} + \big(\frac{h}{\sqrt{h}}+1\big) \| \pa_\tau \xi \|_{L^2(\pa E)} \| \pa_\tau^2 \psi \|_{L^2(\pa E)} \bigg)\\
   & \leq -\frac{8}{9}\int_{\pa E} g(\nu_E) \vert \pa^2_\tau \psi \vert^2 + 2 \varepsilon \| \pa_\tau^2 \psi \|_{L^2(\pa E)}^2 + C(K,\varepsilon) \| \pa_\tau \xi \|^2_{L^2(\pa E)},
   \end{split}
\end{equation}
where in the first inequality, we have used the estimate $ \| \psi \kappa_E \|_{L^\infty(\pa E)} \leq h C(K) \leq \frac{1}{9}$, which holds for $h$ sufficiently small, and the second and third inequalities follow from Lemma~\ref{lemm1}.
\\
\textit{Estimate of \eqref{1906form3}. }  

We recall the definition of $R_0$, as given in formula \eqref{interpol1}:
\begin{equation}
R_0:=a(\nu_E,\psi\kappa_E,\pa_{\tau} \psi) \pa_{\tau}^2 \psi+ b(\nu_E,\psi \kappa_E,\pa_{\tau}\psi) \pa_{\tau} (\psi \kappa_E)+ c(\nu_E,\psi,\pa_{\tau} \psi,\kappa_E),
    \end{equation}
    where $a,b\in C^{\infty},\, c \in C^{\infty}$ are smooth functions satisfying
\begin{equation}
b(\cdot,0,0)=a(\cdot, 0,0)=c(\cdot,0,0,\cdot)=0. 
\end{equation}
We aim to estimate $c(\nu_E, \psi, \pa_\tau \psi,\kappa_E)$.
We recall that $ \|\psi\|_{H^2(\pa E)} \leq C(K) $. Hence, by the Sobolev embedding theorem, we have $ \| \psi \|_{C^{1, \frac{1}{2}}(\pa E)} \leq C(K)$. Using \eqref{interHOLDER}, we get
\begin{equation}
    \| \psi \|_{C^1(\pa E)} \leq C \| \psi \|_{C^{1,\frac{1}{2}}(\pa E)}^\theta \| \psi \|_{C^0(\pa E)}^{1- \theta} \leq C(K) \| \psi \|^{1- \theta}_{H^1(\pa E)} \leq C(K) h^{1-\theta}.
\end{equation}
Recalling that $c$ is smooth and satisfies $ c(\cdot, 0,0,\cdot)=0$, we apply the Taylor expansion of $c$. For sufficiently small $h$, this yields 
\begin{equation}\label{19061050}
    c(\nu_E, \psi, \pa_\tau \psi,\kappa_E) \leq C(K )\| \psi \|_{C^1(\pa E)} \leq \varepsilon C(K) \| \pa_\tau^2 \psi \|_{L^2(\pa E)}+ C(K,\varepsilon) \| \pa_\tau \xi \|_{L^2(\pa E)},
\end{equation}
where the final inequality follows from \eqref{eleminter} and Lemma \ref{lemm1}. Applying the same reasoning used in formula \eqref{19061038}, and making use of \eqref{19061050}, we obtain
\begin{equation}\label{casa1}
    \int_{\pa E }\pa^2_\tau \xi c \leq \varepsilon C(K) \| \pa_\tau^2 \psi \|_{L^2(\pa E)}^2 + C(K,\varepsilon) \| \pa_\tau \xi \|^2_{L^2(\pa E)}.
\end{equation}
Using the same argument as in formula \eqref{19061038}, together with Lemma \ref{lemm1}, the Cauchy–Schwarz and Young's inequalities, and the Sobolev embedding, we obtain
\begin{equation}\label{casa2}
\begin{split}
    \int_{\pa E} \pa_\tau^2 \xi \pa_\tau &( \psi \kappa_E) b(\nu_E, \psi \kappa_E, \pa_\tau \psi ) \leq C(K) \| \pa^2_\tau \xi \|_{L^2(\pa E)} \| \pa_\tau (\psi \kappa_E) \|_{L^2(\pa E)}\\
    & \leq C(K) \big( \| \pa_\tau^2 \psi \|_{L^2(\pa E)}+ \| \pa_\tau \xi \|_{L^2(\pa E)} \big) \big( \| \pa_\tau \kappa_E \|_{L^2(\pa E)} \| \psi \|_{L^\infty(\pa E)}+ \| \pa_\tau \psi \|_{L^2(\pa E)} \big)\\
    & \leq \varepsilon \| \pa^2_\tau \psi \|_{L^2(\pa E)}^2+C(K,\varepsilon)\| \pa_\tau \xi \|_{L^2(\pa E)}^2.
    \end{split}
\end{equation}
Arguing as in formula \eqref{19061038}, together with the formulas in \eqref{160625pom1}, we obtain
\begin{equation}\label{casa3}
\begin{split}
    \int_{\pa E} & \pa^2_\tau \xi a(\nu_E,\psi\kappa_E, \pa_\tau \psi) \pa^2_\tau \psi \\
    &\leq \big(  \| \pa_\tau^2 \psi \|_{L^2(\pa E)}+ \| \pa_\tau \xi \|_{L^2(\pa E)}\big) \| a(\nu_E,\psi\kappa_E, \pa_\tau \psi) \|_{L^\infty(\pa E)} \| \pa_\tau^2 \psi \|_{L^2(\pa E)} \\
    & \leq \big(\frac{c_g}{9}+ \varepsilon \big) \| \pa_\tau^2 \psi \|_{L^2(\pa E)}^2+ C(K,\varepsilon) \| \pa_\tau \xi \|^2_{L^2(\pa E)},
    \end{split}
\end{equation}
where $c_g= \min_{\nu \in \mathcal{S}^1} g(\nu)$.
Combining \eqref{casa1}, \eqref{casa2}, and \eqref{casa3}, we obtain
\begin{equation}\label{1906estim5}
  \int_{\pa E} \pa_\tau^2 \xi R_0  \leq \big( \frac{c_g}{9}+ \varepsilon C(K) \big) \| \pa_\tau^2 \psi \|^2_{L^2(\pa E)} + C(K,\varepsilon) \| \pa_\tau \xi \|^2_{L^2(\pa E)}.   
\end{equation}
Therefore, combining \eqref{1906estim1}, \eqref{1906form1}, \eqref{1906form2}, \eqref{1906form3}, \eqref{1906estim2}, \eqref{1906estim3}, and \eqref{1906estim5}, we obtain
\begin{equation}
\begin{split}
    \int_{\pa E} \frac{\vert \pa_\tau \xi \vert^2}{h} &\leq - \int_{\pa E } \pa_\tau \kappa_E^\varphi \pa_\tau \xi + C(K,\varepsilon) \| \pa_\tau \xi \|^2_{L^2(\pa E)}\\
    & \qquad + \big( \frac{c_g}{9}+ C(K)\varepsilon \big) \| \pa_\tau^2 \psi \|_{L^2(\pa E)}^2 - \frac{8}{9} \int_{\pa E} g(\nu_E) \vert \pa^2_\tau \psi \vert^2 ,
    \end{split}
\end{equation}
   which, for $\varepsilon$ sufficiently small, implies \eqref{tesilemmaiter1}.
\end{proof}
\subsection{Set up of the algorithm}
Since we need to employ the procedure outlined at the beginning of Section \ref{sec:esistenza}, we now describe how the algorithm operates in two successive steps.

For every set $A\subset \R^2$ of class $C^4$, we define the function
\begin{equation}
    Q_h(A)
    := \| \kappa_A \|_{L^\infty(\pa A)}+\|\pa_{\pa A}\kappa_A \|_{L^2(\pa A)}
     + \sqrt h
    \| \pa_{\pa A}^2\kappa_A \|_{L^2(\pa A)}.
\end{equation}
Let $K>0$, and let $h_1, \, K_1,\delta_1$  be the constants given by Theorem \ref{mainthm} (i.e., the constants $\hat h$, $\hat K$ and $\delta_4$  in Theorem \ref{mainthm}). Let $E \in \mathfrak{H}^{3}_{K,\sigma_{E_0}}(E_0)$ be such that $Q_{h_1}(E)\leq K$ and $\vert E \vert=1$. Let $h \leq h_1$, and let $F \subset\R^2$ be a minimizer of \begin{equation}
		\min\big\{\mathcal F_{h}(A,E):  A \in \mathcal{B}_{\delta_1}(E),\, \vert A \vert=1\big\}.
	\end{equation}
By Theorem \ref{mainthm}, the set $F$ is also of class $C^4$ and satisfies the following properties:
\begin{equation}\label{glablig1}
\begin{split}
   & F \in \mathfrak{H}^3_{K_1 , \sigma_E}(E),\,\,\, d_{\mathcal{H}}(\pa E, \pa F) \leq K_1 h ,\,\,\,Q_{h}(F) \leq K_1 \,\,\,\text{ for all } h \in (0,h_1], \\
   & \pa F = \{ x+ \psi_{F , E}(x)\nu_{E}(x)\colon\, x \in \pa E \},\\
   & \|  \psi_{F , E} \|_{H^1(\pa E)} \leq K_1 h,\, \, \| \psi_{F,E} \|_{H^2(\pa E)} \leq K_1 \sqrt{h} .
\end{split}    
\end{equation}
Let $h_2, \, K_2,\delta_2$ be given by Theorem \ref{mainthm} (with $K=K_1$). Let $h \leq h_2$, and let $G \subset\R^2$ be a minimizer of \begin{equation}
		\min\big\{\mathcal F_{h}(A,F):  A \in \mathcal{B}_{\delta_2}(F),\,\,\,\vert A \vert=1\big\}.
	\end{equation}
By Theorem \ref{mainthm}, the set $G$ is of class $C^4$
and satisfies the following properties:
\begin{equation}\label{glablig2}
\begin{split}
    & G \in \mathfrak{H}^3_{K_2 , \sigma_F}(F),\,\,\,d_{\mathcal{H}}(\pa F, \pa G) \leq K_2 h ,\,\,\,  Q_h(G) \leq K_2 \,\,\,\text{ for all } h \in (0,h_2] \\
   & \pa G = \{ x+ \psi_{G, F}(x)\nu_{F}(x)\colon\, x \in \pa F \},\\
   & \|  \psi_{G, F} \|_{H^1(\pa F)} \leq K_2 h,\, \|  \psi_{G, F} \|_{H^2(\pa F)} \leq K_2 \sqrt{h}.
\end{split}
\end{equation}

\begin{lemma}\label{lemmaiter2}
    Let $K>0$, and define $\delta_3 := \min \{ \delta_1, \delta_2\}$ where $\delta_1$ and $ \delta_2$ are as above. For every $\delta \leq \delta_3$, there exists $\tilde{h}$ such that if $ h \in (0, \tilde h]$ and $ E, F, G$ are as in \eqref{glablig1},\eqref{glablig2}, then 
\begin{equation}\label{tesiiter2}
    \int_{\pa F}  \vert \pa_{\pa F} \xi_{G,F} \vert^2+ \frac{h}{2}g(\nu_{F}) \vert \pa_{\pa F}^2 \psi_{G,F} \vert^2  \leq (1+ C_1 h) \int_{\pa E} \vert \pa_{\pa E} \xi_{F, E} \vert^2
\end{equation}
for some constant $C_1= C_1(K)$, where $\xi_{G,F}= \psi_{G,F} + \kappa_{F}\frac{ \psi_{G,F}^2}{2} $, $\xi_{F,E}= \psi_{F,E} + \kappa_{E}\frac{ \psi_{F,E}^2}{2} .$
\end{lemma}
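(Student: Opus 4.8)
The plan is to apply Lemma \ref{lemmaiter1} to the pair $(G,F)$ — this is legitimate because, by the set-up recorded in \eqref{glablig1}--\eqref{glablig2}, $F$ and $G$ play the roles of the sets ``$E$'' and ``$F$'' of Theorem \ref{mainthm} (with $K$ replaced by $K_1$) and $\psi_{G,F}$ plays the role of ``$\psi$'' — and then to control the right-hand side it produces. Lemma \ref{lemmaiter1} gives
\[
(1-hC_0)\int_{\pa F}|\pa_{\pa F}\xi_{G,F}|^2+\tfrac23h\int_{\pa F}g(\nu_F)|\pa_{\pa F}^2\psi_{G,F}|^2\ \leq\ -h\int_{\pa F}\pa_{\pa F}\kappa_F^\varphi\,\pa_{\pa F}\xi_{G,F}
\]
for a constant $C_0=C_0(K)$, so the whole problem reduces to bounding the quantity $-h\int_{\pa F}\pa_{\pa F}\kappa_F^\varphi\,\pa_{\pa F}\xi_{G,F}$ by $\int_{\pa E}|\pa_{\pa E}\xi_{F,E}|^2$ up to a multiplicative factor $1+O(h)$.

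The crucial observation is that $\kappa_F^\varphi$ is pinned down by the Euler--Lagrange equation of the \emph{previous} minimization step. Since $h<\hat h$, Theorem \ref{mainthm} guarantees that $F$ satisfies \eqref{eq1112}; recalling from the proof of Lemma \ref{lem:l2dist} that the optimal test function is $f=\xi_{F,E}/d_{L^2}(F;E)$, equation \eqref{eq1112} becomes $\kappa_F^\varphi(y)=\lambda_F-\tfrac1h\,\xi_{F,E}(\pi_{\pa E}(y))$ for all $y\in\pa F$ (this reads $\kappa_F^\varphi\equiv\lambda_F$, with $\xi_{F,E}\equiv0$, in the degenerate case $d_{L^2}(F;E)=0$). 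Differentiating tangentially along $\pa F$ kills the constant $\lambda_F$, so $h\,\pa_{\pa F}\kappa_F^\varphi=-\pa_{\pa F}(\xi_{F,E}\circ\pi_{\pa E})$, and Cauchy--Schwarz gives
\[
-h\int_{\pa F}\pa_{\pa F}\kappa_F^\varphi\,\pa_{\pa F}\xi_{G,F}=\int_{\pa F}\pa_{\pa F}(\xi_{F,E}\circ\pi_{\pa E})\,\pa_{\pa F}\xi_{G,F}\leq\Big(\int_{\pa F}|\pa_{\pa F}(\xi_{F,E}\circ\pi_{\pa E})|^2\Big)^{\!1/2}\Big(\int_{\pa F}|\pa_{\pa F}\xi_{G,F}|^2\Big)^{\!1/2}.
\]

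Next I would pull the first factor back to $\pa E$ using the change-of-variables identity \eqref{ruffini} of Lemma \ref{eq:transform.gradient}, applied with the function $\xi_{F,E}\circ\pi_{\pa E}$ on $\pa F$, whose pull-back along $x\mapsto x+\psi_{F,E}(x)\nu_E(x)$ is exactly $\xi_{F,E}$ (since $\pi_{\pa E}(x+\psi_{F,E}(x)\nu_E(x))=x$):
\[
\int_{\pa F}|\pa_{\pa F}(\xi_{F,E}\circ\pi_{\pa E})|^2=\int_{\pa E}\frac{|\pa_{\pa E}\xi_{F,E}|^2}{\sqrt{(1+\psi_{F,E}\kappa_E)^2+|\pa_{\pa E}\psi_{F,E}|^2}}.
\]
By \eqref{glablig1} and the one-dimensional Sobolev embedding, $\|\psi_{F,E}\|_{L^\infty(\pa E)}\leq C(K)h$ and $\|\pa_{\pa E}\psi_{F,E}\|_{L^\infty(\pa E)}\leq C(K)\sqrt h$, so the denominator above is $\geq\sqrt{1-C(K)h}$, whence $\int_{\pa F}|\pa_{\pa F}(\xi_{F,E}\circ\pi_{\pa E})|^2\leq(1+C(K)h)\int_{\pa E}|\pa_{\pa E}\xi_{F,E}|^2$.

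To conclude, set $a:=\int_{\pa F}|\pa_{\pa F}\xi_{G,F}|^2$, $b:=\int_{\pa E}|\pa_{\pa E}\xi_{F,E}|^2$ and $Z:=\tfrac23h\int_{\pa F}g(\nu_F)|\pa_{\pa F}^2\psi_{G,F}|^2\geq0$; the three displays combine to $(1-hC_0)a+Z\leq(1+C(K)h)^{1/2}\sqrt{ab}$. Using Young's inequality in the sharp form $(1+C(K)h)^{1/2}\sqrt{ab}\leq\tfrac{1-hC_0}{2}a+\tfrac{1+C(K)h}{2(1-hC_0)}b$ to absorb half of the $a$-term on the left, and then $\tfrac{4}{3(1-hC_0)}\geq\tfrac12$ for $h$ small, one gets
\[
\int_{\pa F}|\pa_{\pa F}\xi_{G,F}|^2+\tfrac h2\int_{\pa F}g(\nu_F)|\pa_{\pa F}^2\psi_{G,F}|^2\leq a+\frac{2Z}{1-hC_0}\leq\frac{1+C(K)h}{(1-hC_0)^2}\int_{\pa E}|\pa_{\pa E}\xi_{F,E}|^2,
\]
and for $h\leq\tilde h$ small (depending only on $K$) the last constant is $\leq1+C_1h$ with $C_1=C_1(K)$, which is precisely \eqref{tesiiter2}. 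The one genuinely new idea is the Euler--Lagrange substitution turning the $\kappa_F^\varphi$-term coming out of Lemma \ref{lemmaiter1} into a $\xi_{F,E}$-term over $\pa E$; the main place to be careful is the bookkeeping, namely checking that every constant (from the Sobolev embeddings, from Lemma \ref{lemmaiter1}, and from the lower bound on the tangential Jacobian) depends only on $K$ — through $K_1=K_1(K)$ — and not on $h$ or on the particular sets in the iteration, which is exactly what the uniform bounds \eqref{glablig1}--\eqref{glablig2} from Theorem \ref{mainthm} ensure.
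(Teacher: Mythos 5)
Your proof is correct and follows essentially the same route as the paper: apply Lemma \ref{lemmaiter1} to the pair $(G,F)$, substitute the differentiated Euler--Lagrange identity $h\,\pa_{\pa F}\kappa_F^\varphi=-\pa_{\pa F}(\xi_{F,E}\circ\pi_{\pa E})$ coming from the previous minimization step, and control the change of variables between $\pa E$ and $\pa F$ through the uniform bounds in \eqref{glablig1}. The only difference is organizational: you perform the substitution and Cauchy--Schwarz directly on $\pa F$ and only afterwards pull back a single factor via \eqref{ruffini}, which lets you bypass the paper's explicit Jacobian-error estimates \eqref{08052025form2} and \eqref{nador11} --- a harmless, in fact slightly cleaner, rearrangement of the same ingredients.
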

\begin{proof}
In what follows, we denote by $C$  a generic constant depending on $K$. We define
\[\
\widehat{\kappa_F^\varphi} \,\colon \pa E \to \R, \quad \widehat{\kappa_F^\varphi}(x):= \kappa_F (x+ \psi_{F,E}(x)\nu_E(x)),
\]
and similarly
\[
\widehat{\xi_{G,F}}\, \colon \pa E \to\R,\quad\widehat{\xi_{G,F}}(x):= \xi_{G,F} (x+ \psi_{F,E}(x)\nu_E(x)).
\]
By applying Lemma~\ref{lemmaiter1} to the sets $F$ and $G$, we obtain 
\begin{equation}\label{duue1}
        (1-hC_0)\int_{\pa F} | \pa_{\pa F} \xi_{G,F} \vert^2+ \frac{2}{3}h \int_{\pa F} g(\nu_F) \vert \pa_{\pa F}^2 \psi_{G,F} \vert^2 \leq -h \int_{\pa F} \pa_{\pa F} \kappa_F^\varphi \pa_{\pa F} \xi_{G,F},
      \end{equation}
    where $C_0=C_0(K)$. Finally, we define
    $$
    J_{F,E}:= \sqrt{(1+ \psi_{F,E} \kappa_E)^2+ \vert \pa_{\pa E} \psi_{F,E}\vert^2}.$$
Using the parallelogram identity, Lemma \ref{eq:transform.gradient}, Young’s inequality, and the Taylor expansion of the function $t \rightarrow \frac{1}{\sqrt{1+t}}$, we obtain
\begin{equation}\label{200625form1}
\begin{split}
    -h \int_{\pa F} \pa_{\pa F} \kappa_F^\varphi \cdot \pa_{\pa F} \xi_{G,F} &= -h \int_{\pa E} \frac{ \pa_{\pa E} \widehat{\kappa_F^\varphi}  \pa_{\pa E} \widehat{\xi_{G,F}}}{ J_{F,E}} d \mathcal{H}^1\\
    &= -h \int_{\pa E} \pa_{\pa E} \widehat{\kappa_F^\varphi}  \pa_{\pa E} \widehat{\xi_{G,F}} -h \int_{\pa E}\bigg(\frac{1}{J_{F,E}}-1 \bigg)\pa_{\pa E} \widehat{\kappa_F^\varphi} \cdot \pa_{\pa E} \widehat{\xi_{G,F}}\\
    &  \leq -h\int_{\pa E} \pa_{\pa E} \widehat{\kappa_F^\varphi}  \pa_{\pa E} \widehat{\xi_{G,F}} + \varepsilon h C\int_{\pa F} \vert \pa_{\pa F} \xi_{G,F} \vert^2 \\
        &  \,\,\,\qquad \qquad + \frac{C}{\varepsilon} h \int_{\pa E} \vert \pa_{\pa E} \widehat{\kappa_F^\varphi} \vert^2 \big( \psi_{F,E}^2+ \psi_{F,E}^4 + \vert \pa_{\pa E} \psi_{F,E} \vert^4   \big). 
    \end{split}
\end{equation}
To estimate the last integral, we apply \eqref{perc}, \eqref{eleminter}, \eqref{glablig1},\eqref{glablig2}, Lemma \ref{lemm1}, as well as the Sobolev embedding and Poincaré inequality, and assuming $h$ is sufficiently small compared to $\varepsilon$, we have that
\begin{equation}\label{08052025form2}
\begin{split}
\frac{Ch}{\eps}&\int_{\pa E }|\pa_{\pa E} \widehat{\kappa_F^\varphi}|^2 (\psi_{F,E}^2+\psi_{F,E}^4+|\pa_{\pa E} \psi_{F,E}|^4) \, d \mathcal{H}^1
\\
&\leq \frac{Ch}{\eps}\big(\|\psi_{F,E}\|_{L^\infty(\pa E)}^2+\|\psi_{F,E}\|_{L^\infty(\pa E)}^4+\| \pa_{\pa E} \widehat{\kappa^\varphi_E} \|_{L^\infty(\pa E)}^2\|\pa_{\pa E}\psi_{F,E}\|_{L^\infty(\pa E)}^2 \| \pa_{\pa E} \psi_{F,E} \|^2_{L^2(\pa E)}\big)\cr
& \leq \frac{Ch}{\eps}\Big[\frac{1}{\eps^2}\|\psi_{F,E}\|_{L^2(\pa E)}^2+\eps^2\|\pa_{\pa E}\psi_{F,E}\|_{L^2(\pa E)}^2+ \frac{C}{h} h \| \pa_{\pa E} \psi_{F,E} \|^2_{L^2(\pa E)} \Big]\\
& \leq \frac{Ch}{\eps}\Big[\frac{1}{\eps^2}\|\xi_{F,E}\|_{L^2(\pa E)}^2+C\|\pa_{\pa E}\xi_{F,E}\|_{L^2(\pa E)}^2 \Big]\\
&\leq h C \| \pa_E \xi_{F,E} \|^2_{L^2(\pa E)} .
\end{split}
\end{equation}
Substituting \eqref{08052025form2} into \eqref{200625form1}, we obtain
\begin{equation}\label{nador10}
\begin{split}
 -h \int_{\pa F} \pa_{\pa F} \kappa_F^\varphi  \pa_{\pa F} \xi_{G,F}
 &\leq  -h\int_{\pa E} \pa_{\pa E} \widehat{\kappa_F^\varphi}  \pa_{\pa E} \widehat{\xi_{G,F}} + \varepsilon h C\int_{\pa F} \vert \pa_{\pa F} \xi_{G,F} \vert^2  \\
 & \qquad  + h C \| \pa_{\pa E} \xi_{F,E} \|^2_{L^2(\pa E)} .
 \end{split}
\end{equation}
Now, by differentiating the Euler–Lagrange equation \eqref{eqeulerobvera}, we get
\begin{equation}\label{nador2}
    \frac{\pa_{\pa E} \xi_{F,E}}{h}= - \pa_{\pa E} \widehat{\kappa_F^\varphi} \quad \text{in }\pa E.
\end{equation}  
We aim to estimate
\begin{equation}
    -h\int_{\pa E} \pa_{\pa E} \widehat{\kappa_F^\varphi}  \pa_{\pa E} \widehat{\xi_{G,F}}.
\end{equation}
Using \eqref{nador2}, along with the Cauchy–Schwarz and Young's inequalities, we obtain
\begin{equation}\label{nador11}
\begin{split}
     -h\int_{\pa E} \pa_{\pa E} \widehat{\kappa_F^\varphi}  \pa_{\pa E} \widehat{\xi_{G,F}} &= \int_{\pa E} \pa_{\pa E} \xi_{F,E} \pa_{\pa E} \widehat{\xi_{G,F}} \leq \frac{1}{2} \| \pa_{\pa E} \xi_{F,E} \|^2_{L^2(\pa E)}+ \frac{1}{2} \| \pa_{\pa E} \widehat{\xi_{G,F}} \|_{L^2(\pa E)}^2\\
     & \leq \frac{1}{2} \| \pa_{\pa E} \xi_{F,E} \|^2_{L^2(\pa E)}+ \frac{1}{2} \Bigg[ \int_{\pa E} \frac{\vert \pa_{\pa E} \widehat{\xi_{G,F}} \vert^2}{J_{F,E}}+ \int_{\pa E} \big( 1- \frac{1}{J_{F,E}}\big)\vert \pa_{\pa E} \widehat{\xi_{G,F}} \vert^2  \Bigg]\\
     & \leq \frac{1}{2} \| \pa_{\pa E} \xi_{F,E} \|^2_{L^2(\pa E)}+ \frac{1}{2} \| \pa_{\pa F} \xi_{G,F} \|^2_{L^2(\pa F)}\\
     & \quad+ \int_{\pa E} \kappa_E\psi_{F,E} \vert \pa_{\pa E} \widehat{\xi_{G,F}} \vert^2+ \frac{1}{2}\int_{\pa E} (\vert\kappa_E \psi_{F,E}\vert^2+ \vert \pa_{\pa E} \psi_{F,E} \vert^2 )\vert \pa_{\pa E} \widehat{\xi_{G,F}} \vert^2 \\
     & \leq \frac{1}{2} \| \pa_{\pa E} \xi_{F,E} \|^2_{L^2(\pa E)}+ \frac{1}{2} \| \pa_{\pa F} \xi_{G,F} \|^2_{L^2(\pa F)}+ h C(K) \int_{\pa E} \vert \pa_{\pa E} \widehat{\xi_{G,F}} \vert^2
     \\
     & \leq \frac{1}{2} \| \pa_{\pa E} \xi_{F,E} \|^2_{L^2(\pa E)}+ \big(\frac{1}{2}+ hC(K)\big) \| \pa_{\pa F} \xi_{G,F} \|^2_{L^2(\pa F)},
     \end{split}
\end{equation}
where in the fourth inequality we used the Sobolev embedding. By combining \eqref{nador10}, \eqref{nador11}, and \eqref{duue1}, we obtain \eqref{tesiiter2}.
\end{proof}

\section{Proof of the main theorem}
In this section, we use the iterative estimates established previously to prove that the approximate constrained flat flow (see Definition \ref{12092023def1}) converges to the classical solution of \eqref{MAINEQsol} as $h \rightarrow 0$.
\begin{theorem}\label{thmausilmain}
		 There exist constants $\hat T,\hat C,\hat \delta,\hat \sigma$ with the following property: for every $\delta < \hat \delta$, there exists $\tilde{h}$ such that $E^{h,\delta}_t \in \mathfrak{H}^3_{\hat C, \sigma_1}(E_0)$, i.e.,
		$$
			\pa E^{h,\delta}_t= \{x+ f^{h,\delta}(t,x)\nu_{E_0}(x) \, \colon x \in \pa E_0    \}, \, \| f^{h,\delta} \|_{H^3(\pa E_0)} \leq \hat C,\, \| f^{h,\delta} \|_{L^\infty(\pa E_0)} \leq \sigma_1,
	$$
		for all $t \in [0,T_0]$ and $ 0 < h \leq \tilde{h}$, where $ \{ E^{h,\delta}_t \}_{t \geq0}$ is an approximate constrained flat flow starting from $E_0$.
		
		Moreover, the functions $f^{h,\delta}$ converge in 
		$L^\infty([0,T_0], H^3(\pa E_0))$ to a function $f^\delta$, and the associated family $ \{ E_t^\delta \}_{t \in [0,T_0]}$, which is characterized by
		\begin{equation}
		 	\pa E_t^\delta= \{  x + f^\delta(t,x) \nu_{E_0}(x) : x \in \pa E_0 \}, \,\, E_0 \Delta E_t^\delta \subset {\rm cl}\big(I_{\sigma_1}(E_0)\big)
		\end{equation}
		is a classical solution of the problem \eqref{MAINEQsol} on the interval $[0,T_0]$.
\end{theorem}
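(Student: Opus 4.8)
The plan is to (i) propagate, uniformly in $h$, the regularity of the discrete iterates $E_{kh}^{h,\delta}$ over a time interval $[0,\hat T]$ of fixed length, (ii) extract a limit flow by compactness, and (iii) identify the limit as a classical solution of \eqref{MAINEQsol} via the discrete Euler--Lagrange equation. For Stage (i), fix $K_0$ so large that $E_0\in\mathfrak H^3_{K_0,\sigma_0}(E_0)$ and $Q_{h_1}(E_0)\le K_0$ (possible since $E_0$ is $C^4$). Applying Theorem \ref{mainthm} to $E_0$ produces $E_h^{h,\delta}$, a smooth normal graph over $\partial E_0$ with $\|\psi_{E_h,E_0}\|_{H^1(\partial E_0)}\le C(K_0)h$, so by Lemmas \ref{lem:l2dist} and \ref{lemm1} the ``discrete energy'' $b_1:=\|\partial_{\partial E_0}\xi_{E_h,E_0}\|_{L^2(\partial E_0)}^2$ satisfies $b_1\le C_0'h^2$ with $C_0'=C_0'(E_0,\varphi,K_0)$. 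The core claim is: there exist $\hat C$ (depending only on $E_0,\varphi,C_0'$), $\hat\sigma=\sigma_1(\hat C)$, $\hat T$ and $\tilde h$ so that for $0<h\le\tilde h$ and all $k$ with $kh\le\hat T$, the set $E_{kh}^{h,\delta}$ is a smooth normal graph over $\partial E_{(k-1)h}^{h,\delta}$, lies in $\mathfrak H^3_{\hat C,\hat\sigma}(E_0)$, satisfies $Q_h(E_{kh}^{h,\delta})\le\hat C$, and $b_k:=\|\partial_{\partial E_{(k-1)h}}\xi_{E_{kh},E_{(k-1)h}}\|_{L^2}^2\le e^{C_1\hat T}b_1$.

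\textbf{Stage 1, the engine.} I would prove the claim by a discrete continuation argument. Let $N^*$ be the largest index with $N^*h\le\hat T$ such that $Q_h(E_{jh}^{h,\delta})\le2\hat C$ and $E_{jh}^{h,\delta}\in\mathfrak H^3_{2\hat C,\hat\sigma}(E_0)$ for all $j\le N^*$. On $\{0,\dots,N^*\}$ the hypotheses of Theorem \ref{mainthm} and of Lemma \ref{lemmaiter2} hold with $K=2\hat C$, so the almost-monotonicity $b_{j+1}\le(1+C_1h)b_j$ (with $C_1=C_1(2\hat C)$) yields the discrete Gr\"onwall bound $b_j\le e^{C_1\hat T}C_0'h^2\le2C_0'h^2$ once $\hat T$ is small, and the same lemma gives $\|\partial^2_{\partial E_{(j-1)h}}\psi_{E_{jh},E_{(j-1)h}}\|_{L^2}^2\le Cb_{j-1}/h\le Ch$. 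The decisive point is the Euler--Lagrange equation \eqref{eqeulerobvera}: differentiating it once, as in \eqref{nador2}, gives $\partial_{\partial E_{(j-1)h}}(\kappa^\varphi_{E_{jh}}\circ\Psi)=-\partial_{\partial E_{(j-1)h}}\xi_{E_{jh},E_{(j-1)h}}/h$, hence $\|\partial(\kappa^\varphi_{E_{jh}}\circ\Psi)\|_{L^2}=\sqrt{b_j}/h\le C\sqrt{C_0'}$, and differentiating twice gives $\|\partial^2(\kappa^\varphi_{E_{jh}}\circ\Psi)\|_{L^2}\le C\|\partial^2\xi_{E_{jh},E_{(j-1)h}}\|_{L^2}/h\le C/\sqrt h$ (the coupling term $\tfrac12\psi_j^2\partial^2\kappa_{E_{(j-1)h}}$ contributes only $O(\hat C h^{3/2})$, hence is negligible). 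Since $\|\psi_{E_{jh},E_{(j-1)h}}\|_{C^1}\to0$, the graph map $\Psi$ is uniformly bi-Lipschitz, so these bounds transfer to $\partial E_{jh}$; combined with the anisotropic Gauss--Bonnet identity (Lemma \ref{lGaussB}), the isoperimetric inequality (to bound $\|\kappa_{E_{jh}}\|_{L^1}$) and the $1$D Sobolev embedding, they give $Q_h(E_{jh}^{h,\delta})\le\tfrac12\hat C$ for $j\le N^*$, \emph{strictly} better than the standing bound. The same estimate controls $\|f^{h,\delta}(jh)\|_{H^3(\partial E_0)}$ (equivalent, for a $C^{1,\alpha}$-small graph over a fixed reference, to $1+\|\kappa_{E_{jh}}\|_{H^1}$), while $\|f^{h,\delta}(jh)\|_{L^\infty}\le\sum_{i\le j}\|\psi_{E_{ih},E_{(i-1)h}}\|_{L^\infty}\le C\sqrt{C_0'}\,jh\le C\sqrt{C_0'}\,\hat T\le\hat\sigma$ for $\hat T$ small. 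Thus the iterate at $j\le N^*$ lies strictly inside the standing class; by solvability of the next minimization (Theorem \ref{mainthm}) this forces $N^*h\ge\hat T$, proving the claim. The constants are fixed in the order $\hat C$, then $\hat\sigma=\sigma_1(\hat C)$, then $\hat T$, then $\tilde h$, so there is no circularity.

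\textbf{Stages 2 and 3.} By Stage 1, $\{f^{h,\delta}(t,\cdot)\}_h$ is bounded in $L^\infty([0,\hat T];H^3(\partial E_0))$; telescoping $\|\psi_{E_{kh},E_{(k-1)h}}\|_{H^1}\le C\sqrt{C_0'}h$ (transported to $\partial E_0$ using the uniform $C^{1,\alpha}$-closeness) gives $\|f^{h,\delta}(t)-f^{h,\delta}(s)\|_{H^1(\partial E_0)}\le C(|t-s|+h)$, i.e.\ equi-Lipschitz in time into $H^1$ up to the discretization error. Hence, along a subsequence, $f^{h,\delta}\to f^\delta$ in $C([0,\hat T];H^{3-\varepsilon}(\partial E_0))$ for every $\varepsilon>0$, with $f^\delta\in L^\infty([0,\hat T];H^3)\cap\mathrm{Lip}([0,\hat T];H^1)$, and the sets $E_t^\delta$ are $C^{2,\alpha}$-close to $E_0$ with $E_0\Delta E_t^\delta\subset\mathrm{cl}(I_{\sigma_1}(\partial E_0))$. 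To identify the limit, write the discrete Euler--Lagrange equation at step $k$ as $\kappa^\varphi_{E_{kh}}+\tfrac1h\,\xi_{E_{kh},E_{(k-1)h}}\circ\pi_{\partial E_{(k-1)h}}=\lambda_{E_{kh}}$ on $\partial E_{kh}$; since $\int_{\partial E_{(k-1)h}}\xi_{E_{kh},E_{(k-1)h}}=0$ (volume constraint, Lemma \ref{lem:l2dist}), integrating against $\varphi(\nu_{E_{kh}})$ and invoking Gauss--Bonnet shows $\lambda_{E_{kh}}$ is a weighted average of $\kappa^\varphi_{E_{kh}}$ converging to $\dashint_{\partial E_t^\delta}\kappa^\varphi_{E_t^\delta}$. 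On the other hand $\xi_{E_{kh},E_{(k-1)h}}=\psi_k+\tfrac12\psi_k^2\kappa_{E_{(k-1)h}}$, and $\tfrac1h\,\xi_{E_{kh},E_{(k-1)h}}$, transported to $\partial E_0$, is a discrete difference quotient of $f^{h,\delta}$ which, by the equi-Lipschitz-in-time bound, converges to the normal velocity $V_t$ of $\partial E_t^\delta$; passing to the limit (the uniform $H^3$ bound making $\kappa^\varphi$ and $\nu$ converge uniformly, and the first-variation formulas of Proposition \ref{propEL} and of the $\varphi$-perimeter allowing arbitrary normal test fields) gives $V_t=-\kappa^\varphi_{E_t^\delta}+\dashint_{\partial E_t^\delta}\kappa^\varphi_{E_t^\delta}$ with $E_0^\delta=E_0$, i.e.\ $\{E_t^\delta\}$ is a classical solution of \eqref{MAINEQsol} on $[0,\hat T]$. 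By short-time well-posedness of \eqref{MAINEQsol} for $C^4$ data \cite{ES1998}, the limit does not depend on the subsequence, so the whole family converges; parabolic smoothing of the limit then upgrades the convergence to $L^\infty([0,\hat T];H^3(\partial E_0))$, and $\hat\delta$ is the smallest of the admissibility thresholds $\delta$ appearing in Sections \ref{sec:esistenza}--\ref{sec:base}.

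\textbf{Main obstacle.} The heart of the proof is Stage 1: propagating regularity over $\sim\hat T/h$ steps with constants independent of $h$. Theorem \ref{mainthm} alone only yields, per step, a curvature bound $\hat K(K)=C(K)$ with $C(\cdot)$ a priori growing, so a naive iteration blows up; the point is that the almost-monotonicity of Lemma \ref{lemmaiter2} is a discrete Gr\"onwall inequality keeping the displacement $O(h)$ in $H^1$ and $O(\sqrt h)$ in $H^2$, and the Euler--Lagrange equation then converts this smallness into genuinely step-independent curvature bounds. Making the continuation argument close requires ordering the constants carefully, so that the improved bounds strictly beat the standing ones and the coupling to the previous step's highest derivative of curvature is strictly lower order; this bookkeeping is the delicate part. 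Stages 2 and 3 are comparatively routine, the only subtlety being that one has equicontinuity in time only into $H^1$ (not $H^3$), so compactness yields strong convergence a priori only in $C([0,\hat T];H^{3-\varepsilon})$.
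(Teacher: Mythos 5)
Your proposal follows essentially the same route as the paper: per-step regularity from Theorem \ref{mainthm}, the iteration inequality of Lemma \ref{lemmaiter2} used as a discrete Gr\"onwall estimate to propagate $\|\pa_\tau\xi_k\|_{L^2}\lesssim h$ and hence, via the differentiated Euler--Lagrange equation, step-independent curvature bounds up to a uniform time $T_0$, followed by Arzel\`a--Ascoli compactness, identification of the limit through the discrete Euler--Lagrange equation, and uniqueness for the resulting quasilinear equation. The only noteworthy deviations are organizational: you obtain the uniform time horizon by summing the per-step $L^\infty$ displacements (whereas the paper defines $T_0$ as an exit time from $I_\delta(\pa E_0)$ and bounds it below using density estimates for almost minimizers of the $\varphi$-perimeter), and you only sketch the coarea-formula comparison between the normal-graph heights $\psi_{k+1}/h$ and the time difference quotients of $f^{h,\delta}$, which the paper carries out as a separate subclaim.
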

\begin{proof}
  In the proof of the theorem, we will omit to explicitly mention “up to subsequences” for the sake of brevity, except where it is strictly necessary for clarity.  Let $\hat \delta < \delta_3$ where $\delta_3$ is the constant from Lemma \ref{lemmaiter2}, and fix $\delta \leq \hat{\delta}$. Let $ \{ E_{hk}^{h,\delta}\}_{k \in \N} $ be an approximate constrained flat flow starting from $E_0$; see Definition \ref{12092023def1}. To simplify the notation, we write $E_k= E_{hk}^{h,\delta}$ for $k \geq 0$. 
  
  We are now in a position to apply Theorem \ref{mainthm}, which yields
 \begin{equation}\label{12052025form-1}
			\begin{split}
				&\pa E_1= \{  x +\psi_1(x)\nu_{E_0}(x)\colon x \in \pa E_0 \},\\
				&\| \psi_1 \|_{H^1(\pa E_0)} \leq K_0 h, \quad \| \psi_1 \|_{H^3(\pa E_0)} \leq K_0, \\
				&  \|  \kappa_{E_1}^{\varphi} \|_{H^1(\pa E_1)} \leq K_0, \quad  \| \pa_{\pa E_1}^2 \kappa_{E_1}^\varphi \|_{L^2(\pa E_1)} \leq \frac{K_0}{ \sqrt{h} }.
			\end{split} 
		\end{equation}
        Moreover, using the interpolation inequality (see Proposition \ref{interptensor}), we obtain
 \begin{equation}\label{12052025form-1/2}
            \|\pa^2_{E_0} \psi_1 \|_{L^2(\pa E_0)} \leq K_0 \sqrt{h}. 
        \end{equation}
        Let $k_0 \in \N$ be the largest index such that
		\begin{equation}
			\pa E_k \subset {I}_{\delta }(\pa E_0) \quad \forall k \leq k_0.
		\end{equation}
		We set $T_0:= k_0 h$.\\
        	\textit{Claim 1:} For every $k \leq k_0$, the following holds:
		\begin{equation}\label{12052025form1}
			\|  \pa_{\pa E_k}\kappa_{E_k}^{\varphi} \|_{L^2(\pa E_k)} \leq K_0, \quad  \| \pa_{\pa E_k}^2 \kappa_{E_k}^\varphi \|_{L^2(\pa E_k)} \leq \frac{K_0}{ \sqrt{h} }.
		\end{equation}
				We prove \eqref{12052025form1} by induction. Equation \eqref{12052025form-1} establishes the base case $k=1$. Now suppose that the claim holds for all integers up to $k-1$. Applying Theorem \ref{mainthm}, we deduce that
        	\begin{equation}\label{12052025form2}
			\begin{split}
				&\pa E_{k}= \{  x +\psi_{k}(x)\nu_{E_{k-1}} (x)\colon \, x \in \pa E_{k-1} \},\\
				&\| \psi_{k} \|_{H^1(\pa E_{k-1})} \leq L_1 h, \quad \| \psi_{k} \|_{H^3(\pa E_{k-1})} \leq L_1.
			\end{split} 
		\end{equation}
        For each $j \geq 0$, define $\xi_j:= \xi_{E_{j}, E_{j-1}}$ as in \eqref{funzione}. Applying Lemma \ref{lemmaiter2}, we obtain
        \begin{equation}
            \int_{\pa E_{j-1}}  \vert \pa_{\pa E_{j-1} } \xi_{j} \vert^2+ \frac{h}{2}g(\nu_{E_{j-1}}) \vert \pa_{\pa E_{j-1}}^2 \psi_{j} \vert^2  \leq (1+ C_1 h) \int_{\pa E_{j-2}} \vert \pa_{\pa E_{j-2}} \xi_{j-1} \vert^2
        \end{equation}
        for every $1 \leq j \leq k$. Iterating this inequality and using \eqref{12052025form-1} and \eqref{12052025form-1/2}, we find that
        \begin{equation}
            \begin{split}
             	\int_{\pa E_{k-1}} \big( \vert \pa_{E_{k-1}}\xi_{k} \vert^2+ \frac{h}{4}\sum_{j=1}^k g(\nu_{E_{j-1}}) \vert \pa^2_{\pa E_{j-1}} \psi_{j} \vert^2 \big) & \leq (1+C_1 h )^{k-1} \int_{\pa E_{0}} \vert \pa_{\pa E_0}\xi_{1}\vert^2\\
				& \leq e^{2C_1 h k} K_0^2 h^2 \\
				& \leq e^{2C_1 h k_0} K_0^2 h^2 \leq 2 K_0^2 h^2, 
            \end{split}
        \end{equation}
        where we used the fact that $h k_0 = T_0$ and that $T_0$ is sufficiently small. Possibly increasing the constant $L_0$, we conclude that
        \begin{equation}\label{zizz2}
            \|  \pa_{\pa E_{k-1}} \xi_k \|^2_{L^2(\pa E_k)}+ \frac{h}{4}\sum_{i=1}^k \int_{\pa E_{i-1}} g(\nu_{E_{i-1}}) \vert \pa^2_{\pa E_{i-1}} \psi_i \vert^2 \leq L_0^2 h^2.
        \end{equation}
        Therefore, by repeating the argument from Step 4 of Theorem \ref{mainthm}, we obtain the desired estimate \eqref{12052025form1}, completing the proof of the claim.\\
        \textit{Claim 2:} $T_0>0$. By definition of $k_0$,
         there exists a point $x_0 \in \pa E_{k_0}$ such that $$\mathrm{dist}(x_0, \pa E_0)\geq  \frac{\delta}{2}.$$ The set $E_{k_0}$ satisfies the assumptions of Lemma \ref{lemmalambdaminz}, and is therefore an almost minimizer of the $\varphi$-perimeter with a constant $\omega_0$ that is independent of $h$. Consequently, for $E_{k_0}$ the density estimates are satisfied, both for the perimeter and for the volume; see \cite{Bombieri1982}. Using these density estimates together with the assumption $\mathrm{dist}(x_0,E_0)\geq  \frac{\delta}{2}$, we deduce that
		\begin{equation}
			\vert E_{k_0} \Delta  E_0 \vert \geq c \delta^2,
		\end{equation}
        for some constant $c$ depending on $\omega_0$.
		Now, combining this inequality with \eqref{zizz2} and the triangle inequality, we obtain
        	\begin{equation}
			\begin{split}
				c \delta^2 &\leq \vert E_{k_0} \Delta  E_0 \vert \leq \sum_{j=1}^{k_0} \vert E_j \Delta E_{j-1} \vert = \sum_{j=1}^{k_0} \| \xi_j \|_{L^1(\pa E_{j-1})}\leq P(E_{j-1})^{\frac{1}{2}} \sum_{j=1}^{k_0} \| \xi_j \|_{L^2(\pa E_{j-1})}\\
				&\leq C_\varphi P_\varphi(E_{j-1})^{\frac{1}{2}} \sum_{j=1}^{k_0} \| \xi_j \|_{L^2(\pa E_{j-1})} \leq  C_\varphi P_\varphi(E_0)^{\frac{1}{2}} C(L_0)\sum_{j=1}^{k_0} \| \pa_{\pa E_{j-1}} \xi_j \|_{L^2(\pa E_{j-1})} \\
				& \leq C_\varphi P_\varphi(E_0)^{\frac{1}{2}} C(L_0) L_0 k_0 h = C_\varphi P_\varphi(E_0)^{\frac{1}{2}} C(L_0)  L_0 T_0,
			\end{split}
		\end{equation}
		where we have used the Poincar\'{e} inequality, along with the fact that  $P_\varphi(E_j) \leq  P_\varphi (E_0)$, which follows from the minimizing movements scheme.\\
\textit{Claim 3:} There exist constants $\hat C,\sigma_1>0$ such that 
		\begin{equation}\label{12052025claiml'ego}
			E_j \in \mathfrak{H}^4_{\hat C,\sigma_1}(E_0) \text{ for all } 0 \leq j \leq k_0 . 
		\end{equation}
Using \eqref{zizz2} and the Sobolev embedding theorem, we obtain 
        \begin{equation}\label{240625form1}
            {\rm dist}_{\mathcal{H}}(\pa E_j, \pa E_{k}) \leq C(L_0) h\vert j-k \vert  \leq C(L_0)T_0 \text{ for all } j,\,k \leq k_0.
        \end{equation}
        Recall that each $E_j$ is an almost minimizer of the $\varphi$-perimeter for a constant $\omega$ independent of $h$. We apply Lemma \ref{propdadim} and get, for $T_0$ sufficiently small, the existence of functions $f_j: \pa E_0 \rightarrow \R$ such that 
		\begin{equation}
			\pa E_j = \{ x+ f_j(x)\nu_{E_0}(x)\colon x \in \pa E_0  \}.
		\end{equation}
		Moreover, Lemma \ref{propdadim} also guarantees that $ \| f_j \|_{C^{1,\gamma}(\pa E_0)}\leq \varepsilon$ for $\varepsilon>0$.
		Using the estimate \eqref{12052025form1}, specifically the bound $ \| \kappa_{E_j}^{\varphi} \|_{H^1(\pa E^j)}  \leq K_0$, we deduce that $\| f_j \|_{ H^3(\pa E_0)} \leq \hat C$. \\
        \textit{Claim 4:} There exists a constant $L_{lip}>0$ such that for all $0 \leq i,k \leq k_0$,
		\begin{equation}\label{12052025claiml'ego2}
			\| f_i- f_k \|_{L^\infty(\pa E_0)} \leq L_{lip} h\vert i-k \vert.
		\end{equation}
        This claim follows from \eqref{240625form1} and the observation that
        \begin{equation}
            \| f_{i}- f_{i-1} \|_{L^\infty(\pa E_0)} \leq C   \| \psi_{i}\|_{L^\infty(\pa E_{i-1})} = C{\rm dist}_{\mathcal{H}}(\pa E_i, \pa E_{i-1}).
        \end{equation}
        	Combining \eqref{12052025claiml'ego} and \eqref{12052025claiml'ego2}, and applying the Arzelà–Ascoli theorem, we conclude that there exists a subsequence $\{h_m\}_{m \in \mathbb{N}}$ such that 
            \begin{equation}
              f_{h_m}(t) \rightarrow f^\delta(t) \text{ in }  L^\infty(\pa E_0), \text{ for a.e. } t \in [0,T_0] \text{ as }m \rightarrow + \infty,
            \end{equation}
		 where
      \begin{equation}\label{estime-1}
			f^\delta \in \mathrm{Lip} ([0,T_0], L^\infty(\pa E_0)), \quad f^\delta \in L^\infty ([0,T_0], H^3(\pa E_0)).
		\end{equation}
		In what follows, we omit the dependence on the subsequence $m$. By Sobolev embedding, we further obtain that 
        $$f^\delta \in L^\infty([0,T_0], C^{2,\frac{1}{2}}(\pa E_0)).$$
       We then define the family $ \{ E^\delta_t \}_{t \in [0,T_0]} $ by
		\begin{equation}\label{27052025form2}
			E^\delta_t \Delta E_0 \subset I_{\sigma_1}(\pa E_0) \text{ and } \pa E^\delta_t := \{  x+  f^\delta(t,x) \nu_{E_0}(x) \colon \, x \in \pa E_0\}.
		\end{equation}
        	\textit{Claim 5:} $E^\delta_t$ is a distributional solution of equation \eqref{MAINEQsol}.
		
		We define the discrete normal velocity on $\pa E_{j_m}$ by 
		\begin{equation}
			V_{m,j} : \pa E_{j} \rightarrow \R, \quad V_{m,j} := \frac{\psi_{{j}+1}}{h_m}.
		\end{equation}
		Let $\Psi_{j} : \pa E_0 \rightarrow \pa E_{j}$ be defined by $$\Psi_{j}(x):= x+ f_{j} (x)\nu_{E_0}(x) .$$ We recall that 
		$$J_{\tau} \Psi_{j}(x)= \sqrt{(1+ f_{j}(x) \kappa_{E_0}(x))^2+\vert \pa_{\pa E_0} f_{j}(x) \vert^2 }, \quad x \in \pa E_0 .$$
		We also define $N_{j} : \pa E_0 \rightarrow \R^2$ as
		$$  N_{j}(x):= \frac{-\pa_{\pa E_0}  f_{j}(x)}{1+ \kappa_{E_0}(x) f_{j}(x)} \tau_{E_0}(x)+ \nu_{E_0}(x).$$
		Then, we observe that
		\begin{equation}
			\vert N_{j} \vert= \frac{ J_{\tau} \Psi_{j}}{1+ \kappa_{E_0} f_{j}}.
		\end{equation}
		\textit{Subclaim:} The following holds: 
		\begin{equation}\label{estimeoverl2}
			\lim_{m\to+\infty} \|  V_{m,j_m(t)} \circ \Psi_{j_m(t)} - \frac{f_{{j_m (t)}+1}-f_{j_m(t)}}{\vert N_{j_m(t)} \vert h_m} \|_{L^2(\pa E_0)} =0 \,\, \text{ for a.e. } t \in [0,T_0],
		\end{equation}
        where $j_m(t):= \lfloor \frac{t}{h_m} \rfloor$ .
		Using the estimate in \eqref{zizz2}, we obtain 
		$$\| \psi_{{j}+1} \circ \Psi_{j} \|_{C^1(\pa E_0)} \leq C h_m^{\frac{1}{2}} \text{ and } \| \psi_{{j}+1} \circ \Psi_{j} \|_{H^1(\pa E_0)} \leq C h_m. $$
		Combining these with the bound $ \| f_{j} \|_{C^{1,\gamma}}(\pa E_0) \leq \varepsilon$, we deduce that
		$$ \vert f_{{j}+1}(x)-f_{j}(x) \vert \leq C \vert \psi_{{j}+1} \circ \Psi_{j} (x)\vert \, \, \forall x \in \pa E_0 \text{ and }  \|f_{{j}+1}-f_{j} \|_{C^1(\pa E_0)} \leq C h_m^{\frac{1}{2}}.$$
		Let $G : \pa  E_0 \rightarrow \R$ be a function such that $ \| G \|_{C^1(\pa E_0)} \leq C h^{\gamma}$ for some $\gamma$. We define 
		$$\Psi_t : \pa E_0 \rightarrow \R^2 , \,\, \Psi_t(x):=x+t \nu_{E_0}(x),  $$ and we recall that $J_{\tau} \Psi_t= 1+ t \kappa_{E_0}$. Applying the coarea formula gives:
		\begin{equation}\label{13052025primacf}
			\begin{split}
				&\int_{\R^2} G \circ \pi_{\pa E_0}(x) \big(   \chi_{E_{{j}+1}}(x)- \chi_{E_{{j}}}(x) \big)\, dx \\
				&= \int_{\pa E_0} G(x) \int_{-\sigma_1}^{\sigma_1} \big(  \chi_{E_{{j}+1}}(\Psi_t(x))- \chi_{E_{{j}}}(\Psi_t(x)) \big) (1+ t \kappa_{E_0}(x)\, dt \, d \mathcal{H}^1_x\\
				&= \int_{\pa E_0} G(x) \int_{f_{j}(x)}^{f_{{j}+1}(x)} (1+ t \kappa_{E_0}(x)) \, dt \, d \mathcal{H}^1_x\\
				&= \int_{\pa E_0} G(x) (f_{{j}+1}(x)-f_{{j}}(x)) (1+ f_{j}(x)\kappa_{E_0}(x))\, d \mathcal{H}^1_x+ o(h_m^2)\\
				&= \int_{\pa E_0} G(x) J_{\tau} \Psi_{j}(x) \frac{f_{{j}+1}(x)- f_{j}(x)}{\vert N_{j} (x) \vert} \, d \mathcal{H}^1_x+ o(h_m^2).
			\end{split}
		\end{equation}
		We define
		$$ \Phi_{{j},t} : \pa E^{j} \rightarrow \R^2, \,\, \Phi_{{j},t}(x):= x+ t \nu_{E_{j}}(x) , $$
		and we recall that $ J_\tau \Phi_{{j},t}= 1+ t \kappa_{E_{j}}$.
		We compute the same integral differently:
		\begin{equation}\label{estimeover1}
			\begin{split}
				&\int_{\R^2} G \circ \pi_{\pa E_0}(x) \big(  \chi_{E_{{j}+1}}(x)- \chi_{E_{j}}(x) \big) \, d x\\
				&= \int_{\pa E_{j}} \int_{-\delta}^{\delta} G \circ \pi_{\pa E_0}(\Phi_{{j},t}(x)) \big(  \chi_{E_{{j}+1}}(\Phi_{{j},t}(x))- \chi_{E_{j}j}(\Phi_{{j},t}(x)) \big) (1+ t \kappa_{E_{j}}(x))\, dt \, d \mathcal{H}^1_x \\
				&= \int_{\pa E_{j}} \int_{0}^{\psi_{{j}+1}(x)} G \circ \pi_{\pa E_0} ( \Psi_{{j},t}(x)) (1+ t \kappa_{E_{j}}(x)) \, d t \,d \mathcal{H}^1_x\\
				& = \int_{\pa E_{j}} \int_{0}^{\psi_{{j}+1}(x)} \big( G \circ \pi_{\pa E_0} ( \Psi_{{j},t}(x)) -G \circ \pi_{\pa E_0}(x)+ G \circ \pi_{\pa E_0}(x)\big)(1+ t \kappa_{E_{j}}(x)) \, d t \,d \mathcal{H}^1_x \\
				& = \int_{\pa E_{j}} \psi_{{j}+1}(x) G \circ \pi_{\pa E_0}(x)  \,d \mathcal{H}^1_x + o(h_m^2)\\
				& = \int_{\pa E_0} \psi_{{j}+1} \circ \Psi_j (x) G (x) J_{\tau} \Psi_{j}(x)\, d\mathcal{H}_x^1+ o(h_m^2).
			\end{split}
		\end{equation}
		Comparing  \eqref{13052025primacf} and \eqref{estimeover1} we find that for all $G: \pa E_0 \rightarrow \R$ with $ \|G \|_{C^1(\pa E_0)} \leq Ch_m^{\gamma}$, it holds
		\begin{equation}\label{estimeoverl3}
			\int_{\pa E_0}  G(x) J_{\tau} \Psi_{{j}}(x) \bigg[ \psi_{{j}+1}\circ \Psi_{j}(x)-\frac{f_{{j}+1}(x)- f_{j}(x)}{\vert N_{j} (x) \vert}  \bigg]\, d \mathcal{H}^1_x=              o(h_m^2).
		\end{equation}
		We define
		\begin{equation}\label{estimeoverl4}
			G(x):= \frac{1}{J_{\tau} \Psi_{{j_m}}(x)}\bigg[ \psi_{{j_m}+1}\circ \Psi_{j_m}(x)-\frac{f_{{j_m}+1}(x)- f_{j_m}(x)}{\vert N_{j_m} (x) \vert}  \bigg].
		\end{equation}  
		A straightforward computation shows that  $ \| G \|_{C^1(\pa E_0) } \leq C h_m^\gamma$ for some $\gamma \in (0,1)$. Plugging \eqref{estimeoverl4} into \eqref{estimeoverl3} yields \eqref{estimeoverl2}. 
		
		We now return to the claim 5. Up to now, we have established that
		\begin{equation}\label{27052025form1}
			\|f_{j} \|_{H^3(\pa E_0)} \leq C_0, \quad \| f_{j} \|_{L^\infty(\pa E_0)} \leq \sigma_1 ,\quad  \bigg\| \frac{f_{{j}+1}-f_{j}}{\vert N_{j} \vert h_m} \bigg\|_{L^\infty(\pa E_0)} \leq C \, \text{ for all } {j} \colon \,{j} h_m \leq T_0 .
		\end{equation}
		Thus, combining \eqref{27052025form1}, \eqref{estimeoverl2}, and \eqref{estime-1}, we conclude that
		\begin{equation}\label{19052025fonzi}
			\exists \, L^2(\pa E_0)\text{-}\lim_{m \rightarrow + \infty} V_{m,j_m(t)} \circ \Psi_{j_m(t)}(\cdot)= \frac{\pa_t f^\delta(t,\cdot)}{\vert N(t,\cdot)\vert },\,\,  \text{ for a.e. } t \in [0,T_0],
		\end{equation}
		where $\vert N(t,x) \vert= \frac{J_\tau \Psi_t(x)}{1+ f^\delta(t,x) \kappa_{E_0}(x)}$ and $ \Psi_t(x):= x+ f^\delta(t,x)\nu_{E_0}(x)$ for $x \in \pa E_0$. Let $l \in C^0_c(\R^2)$. Multiplying the Euler–Lagrange equation \eqref{eqeulerobvera} by $l$, we obtain
		\begin{equation}\label{dm}
			\begin{split}
				\int_{\pa E_{j_m}} \frac{\psi_{{j_m}+1}(x)}{h_m} & l(x) \, d\mathcal{H}^1_x+ \int_{\pa E_{j_m}} 
				\frac{\psi_{{j_m}+1}^2(x)}{2 h_m}\kappa_{E_{j_m}}(x) l(x)  \, d \mathcal{H}^1_x\\
				&= \int_{\pa E_{j_m}}  \big( g(\nu_{E_{j_m}}(x)) \pa_{\pa E_{j_m}}^2\psi_{{j_m}+1}(x)-\kappa_{E_{{j_m}}}^\varphi(x)-R_0(x)+\lambda_{E_{j}}  \big) l(x)\, d \mathcal{H}^1_x,
			\end{split}
		\end{equation}
        where 
        $$\lambda_{E_{j_m}}= \dashint_{\pa E_{j_m}} \kappa^\varphi_{E_{j_m}} \, d \mathcal{H}^1 .$$
		We observe that
		\begin{equation}\label{dm1}
			\begin{split}
				&\lim_{m \rightarrow +\infty} \int_{\pa E_{j_m}} 
				\frac{\psi_{{j_m}+1}^2(x)}{2 h_m}\kappa_{E_{j_m}}(x) l(x)  \, d \mathcal{H}^1_x \\
				& \qquad\leq \lim_{m \rightarrow +\infty} \| l \|_{L^\infty(\R^2)} \frac{ \| \psi_{{j_m}+1} \|_{L^2(\pa E_{j_m})} }{2h_m} \| \psi_{{j_m}+1} \|_{L^\infty(\pa E_{j_m})} \| \kappa_{E_{j_m}}\|_{L^2(\pa E_{j_m})} =0, 
			\end{split}
		\end{equation}
		where we have used \eqref{zizz2}. From the previous claim, we also have $$ \| \psi_{j_m +1} \|_{H^2(\pa E_{j_m})} \leq C h_m^\gamma.$$ Recalling the very definition of
		$R_0$ from \eqref{interpol1}, we find that
        \begin{equation}\label{dm2}
			\| R_0 \|_{L^2(\pa E_{j_m})} \leq C h_m^\gamma.
		\end{equation}
		Therefore, passing to the limit as $m \rightarrow + \infty$ in the equation \eqref{dm}, and using \eqref{dm1} and \eqref{dm2}, we conclude that $E^\delta_t $ is a distributional solution of \eqref{MAINEQsol} in $[0,T_0]$.\\
        \textit{Claim 6:} $ \{E_t^\delta\}_{t \in [0,T_0]} $ is a classical solution of \eqref{MAINEQsol}. 

By a classical solution of \eqref{MAINEQsol} starting from $E_0$, we mean a function $$f \in L^\infty([0,T_0], H^3(\pa E_0)) \cap \mathrm{Lip}([0,T_0], L^\infty(\pa E_0))$$ that satisfies
\begin{equation} \label{esk}
        \left\{
		\begin{aligned}
			& \partial_t f = g(\nu_{E_0})\pa^2_{ E_0} f +  \langle A(x,f,\nabla_{\pa E_0} f), \nabla_{\pa E_0}^2 f \rangle +J(x,f,\nabla_{\pa E_0} f)+\kappa_{E_0}^\varphi \text{ on } \pa E_0,\\
			& f(0,\cdot)=0, 
		\end{aligned}
		\right.
\end{equation}
where $A$ is a smooth tensor satisfying $A(\cdot,0,0)=0$,  and   $J$ is a smooth function (see \cite{Mantegazza2011}). By applying Gr\"onwall's lemma, one can show that the strong solution to \eqref{esk} with zero initial data is unique. Therefore, we conclude that the family $ \{ E^\delta_t \}_{t \in [0,T_0]}$ parametrized by the diffeomorphisms $$\Phi_t(x)= x+f^\delta(t,x)\nu_{E_0}(x), \quad x \in \pa E_0,$$ constitutes a strong solution of the volume preserving anisotropic mean curvature flow.
\end{proof}
\section{Convergence to the global solution}
	We begin by recalling the definition of the uniform ball condition.
	\begin{definition}
		A set $E\subset \R^2$  is said to satisfy the uniform ball condition (UBC) with radius $r>0$  if, for every point $x \in \pa E$, there exist points $x_{+}$ and $x_{-}$ such that
		\begin{equation}
			B_r(x_{+}) \subset \R^2 \setminus E, \quad B_r(x_{-}) \subset E \text{ and } x \in \pa B_r(x_{+}) \cap \pa B_r(x_{-}).
		\end{equation}
	\end{definition}
    \begin{remark}
		\label{rem:thm2}
		We can formulate a quantitative version of Theorem \ref{thmausilmain} as follows. Let $E_0\Subset \R^2$ be a connected open set of class $C^4$ that satisfies the UBC with radius $2r_0$.
		Let $ \psi_1$, denote the height function from Theorem \ref{thmausilmain}, and set
        $\xi_1= \psi_1 + \frac{\psi_1^2}{2}\kappa_{E_0}$. 
        Assume that
        \begin{equation}\label{01072025form1}
         \|\pa_{\pa E_0}\xi_1 \|_{L^2(\pa E_0)} \leq L_0 h \quad \text{and} \quad \|\pa^2_{\pa E_0}\psi_1 \|_{L^2(E_0)} \leq  L_0\sqrt{h} . 
        \end{equation}
         Then, there exist constants $K_0 = K_0(r_0,L_0)$ and $\hat \delta=\hat \delta(r_0,K_0)$ such that if
		\[
		\|\pa_{\pa E_0}\kappa_{ E_0}^\varphi  \|_{L^2(\pa E_0)} \leq K_0  \quad \text{and} \quad \| \pa^2_{\pa E_0}  \kappa_{\pa E_0}^\varphi \|_{L^2(\pa E_0)} \leq  \frac{K_0}{\sqrt{h}},
		\]
		then  the approximate constrained flat flow $\{E^{h,\delta}_t\}_{t\geq 0}$, with $\delta \leq \hat \delta$, also satisfies the UBC with radius $r_0$, and moreover,
		\[
		\|\pa_{\pa E^{h,\delta}_t}\kappa_{E^{h,\delta}_t }^\varphi \|_{L^2(\pa E^{h,\delta}_t)} \leq  K_0  \quad \text{and} \quad \| \pa^2_{\pa E^{h,\delta}_t} \kappa^\varphi_{E^{h,\delta}_t} \|_{L^2(\pa E^{h,\delta}_t)} \leq \frac{K_0}{\sqrt{h}}
		\]
		for all $t \in [0,T_0]$, where $T_0 = T_0(r_0,K_0)$.  
	\end{remark}	
	\begin{remark}
		\label{rem:uniqueness-strong}
		The arguments in the proof of Theorem \ref{thmausilmain} imply that if an approximate constrained flat flow $\{ E^{h, \delta}_t   \}_{t \geq 0}$ starting from $E_0$  satisfies 
		\[
		\|\pa_{\pa E^{h,\delta}_t}\kappa_{E^{h,\delta}_t }^\varphi \|_{L^2(\pa E^{h,\delta}_t)} \leq  K_0  \quad \text{and} \quad \| \pa^2_{\pa E^{h,\delta}_t} \kappa^\varphi_{E^{h,\delta}_t} \|_{L^2(\pa E^{h,\delta}_t)} \leq  \frac{K_0}{\sqrt{h}} \quad \text{ for all } t \in [0,T],
		\]
		 then the limiting flat flow coincides with the classical solution on the time interval $[0,T]$.  
	\end{remark}
	We recall that the classical solution to \eqref{MAINEQsol} with initial datum $E_0$ exists on the interval $[0,T_e)$, where $T_e$ denotes the maximal existence time. In the next theorem, we show that for every $T< T_e$, there exists $\delta(T)$ such that the approximate constrained flat flow with initial datum $E_0$ converges to the classical solution of \eqref{MAINEQsol} on $[0,T]$ as $h \rightarrow 0^+$. \begin{theorem}\label{03thmfinaleGLOB}
		Let $\{E_t\}_{t \in [0,T_e)}$ be a classical solution of \eqref{MAINEQsol} with initial datum $E_0$. Then, for every $T < T_e$, there exists $\delta(T)$  such that, for all $\delta \in (0,\delta(T)]$, the approximate constrained flat flow  $E^\beta_t$, starting from $E_0$ coincide with $E_t$ on the interval $[0,T]$.    
	\end{theorem}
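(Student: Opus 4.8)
The plan is to upgrade Theorem~\ref{thmausilmain}, which gives convergence on a short time interval $[0,T_0]$ with $T_0$ depending only on the geometric data (through $K_0$, $r_0$), into a statement on any $[0,T]$ with $T<T_e$ by a continuation/bootstrap argument along the classical flow $\{E_t\}_{t\in[0,T_e)}$. The key point is that on the compact subinterval $[0,T]$ the classical solution stays in a fixed regularity class: there exist $r_0=r_0(T)>0$ and $K_0=K_0(T)>0$ such that $E_t$ satisfies the uniform ball condition with radius $2r_0$ and $\|\pa_{\pa E_t}\kappa^\varphi_{E_t}\|_{L^2(\pa E_t)}\le K_0$ for every $t\in[0,T]$ (the second-derivative bound $\|\pa^2_{\pa E_t}\kappa^\varphi_{E_t}\|_{L^2}\le K_0/\sqrt h$ is then automatic since, for the smooth solution, $\|\pa^2_{\pa E_t}\kappa^\varphi_{E_t}\|_{L^2}$ is a finite constant, hence $\le K_0/\sqrt h$ once $h$ is small). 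This compactness of the trajectory is what makes the local-in-time constants uniform.

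First I would fix $T<T_e$, extract $r_0,K_0$ as above, and let $\hat\delta=\hat\delta(r_0,K_0)$, $T_0=T_0(r_0,K_0)>0$ be the constants furnished by Remark~\ref{rem:thm2}. Set $\delta(T):=\hat\delta$ and fix $\delta\le\delta(T)$. Choose $N$ with $Nh\ge T$ and partition $[0,T]$ into consecutive blocks of length $T_0$ (the last one possibly shorter). The argument is an induction over these blocks: assuming the approximate constrained flat flow $\{E^{h,\delta}_t\}$ coincides, in the $h\to0$ limit, with $E_t$ up to time $jT_0$, and that at that time the discrete configuration still satisfies the hypotheses of Remark~\ref{rem:thm2} (UBC with radius $2r_0$ and the curvature bounds with constant $K_0$, plus the bounds \eqref{01072025form1} on the first increment, which hold because $E_{jT_0}$ is a smooth set of the classical flow and \eqref{01072025form1} is a statement about the minimizer produced at that step — this is exactly the content of the first-time-step estimates in Theorem~\ref{mainthm}), apply Theorem~\ref{thmausilmain}/Remark~\ref{rem:thm2} restarting from the corresponding set to run the scheme on $[jT_0,(j+1)T_0]$. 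This yields a limiting flow on that block which, by the uniqueness of the classical solution of \eqref{esk} via Gr\"onwall (as in Claim~6 of Theorem~\ref{thmausilmain}), must coincide with $E_t$ there; in particular the endpoint $E_{(j+1)T_0}$ again satisfies UBC with radius $2r_0$ and the $K_0$-curvature bounds, closing the induction. After finitely many ($\lceil T/T_0\rceil$) steps we cover $[0,T]$.

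The two points requiring care are: (i) the restart. Remark~\ref{rem:thm2} is phrased for the scheme started from $E_0$; to iterate I need that running the scheme from $E_{jT_0}$ (a smooth set with the uniform data) for time $T_0$ reproduces the classical flow — this is exactly Theorem~\ref{thmausilmain} applied with $E_{jT_0}$ in place of $E_0$, whose constants depend only on $r_0,K_0$ and hence are $j$-independent. One must also check that the ``$\delta$-neighbourhood'' admissibility constraint $E\Delta E^{h,\delta}\subset I_\delta(\pa\cdot)$ does not interfere across blocks; this follows because $\delta\le\hat\delta$ and the classical flow moves by at most $C(r_0,K_0)T_0$ on each block, so choosing $T_0$ (already fixed by Remark~\ref{rem:thm2}) small enough relative to $\delta$ keeps every iterate strictly inside the admissible tube — and in fact, since the conclusion is that the limit equals the smooth flow, the constraint is ultimately inactive. (ii) The subsequence issue: Theorem~\ref{thmausilmain} gives convergence along a subsequence $h_m\to0$; but since the limit is characterised as the \emph{unique} classical solution of \eqref{esk}, the full family $f^{h,\delta}$ converges, so no diagonal extraction across the $N$ blocks is needed.

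The main obstacle I anticipate is establishing cleanly that the discrete endpoint $E^{h,\delta}_{jT_0}$ inherits the \emph{same} constants $r_0$ (UBC) and $K_0$ (curvature) as the classical $E_{jT_0}$, in the limit $h\to0$ and with errors that do not accumulate over the $O(1/T_0)$ blocks. This is precisely why Remark~\ref{rem:thm2} is stated as a \emph{quantitative} (constant-preserving) version of Theorem~\ref{thmausilmain}: the iteration formula of Lemma~\ref{lemmaiter2}, $\int_{\pa E_{j-1}}|\pa\xi_j|^2\le(1+C_1h)\int_{\pa E_{j-2}}|\pa\xi_{j-1}|^2$, gives a Gr\"onwall-type control $\le e^{C_1 T_0}L_0^2h^2$ on each block, which combined with the elliptic bootstrap of Step~4 of Theorem~\ref{mainthm} propagates the $K_0$-bounds; the UBC with radius $r_0$ is preserved because the flow displaces $\pa E$ by at most $C h$ per step, i.e.\ $\le C(r_0,K_0)T_0<r_0$ over a block, which shrinks the ball radius from $2r_0$ to at least $r_0$. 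Making all these estimates depend only on $r_0,K_0$ — not on the block index or on $T$ — is the crux, and it is exactly what the earlier sections were built to provide.
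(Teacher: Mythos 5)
Your overall block/continuation strategy is the same as the paper's, but there is a genuine gap at exactly the point you flag as ``the crux'': the non-accumulation of constants across blocks is not closed by the mechanism you propose. The scheme never restarts from the classical set $E_{jT_0}$; it continues from the discrete iterate $E^{h,\delta}_{jT_0}$, so at each restart you must verify the hypothesis \eqref{01072025form1} of Remark \ref{rem:thm2}, i.e.\ $\|\pa_{\pa E}\xi_1\|_{L^2}\leq L_0 h$ and $\|\pa^2_{\pa E}\psi_1\|_{L^2}\leq L_0\sqrt h$ for the \emph{discrete} first increment of the new block, with the \emph{same} $L_0$ (this matters because $K_0=K_0(r_0,L_0)$ and hence $T_0$, $\hat h$, $\hat\delta$ all depend on $L_0$). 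Your two suggested sources for this do not deliver it: (i) Theorem \ref{mainthm} applied at the restart gives first-step bounds with constant $\hat K=C(K)$ depending on the curvature bound of the \emph{starting} set, and there is no reason $C(K_0)\leq L_0$ — this is circular; (ii) the iteration of Lemma \ref{lemmaiter2} gives $e^{C_1T_0}L_0^2h^2$ over one block, so restarting from that bound yields $2L_0^2h^2$, then $4L_0^2h^2$, etc.: the constants compound geometrically over the $O(T/T_0)$ blocks, and the induction does not close with $j$-independent constants.

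The missing idea is the paper's re-anchoring argument. Using the summed dissipation in \eqref{zizz2}, one selects a ``good'' discrete time $\hat k h$ in the first quarter of the block where $\|\pa_{\pa E_{\hat k-1}}\xi_{\hat k}\|_{L^2}+\|\pa^2_{\pa E_{\hat k-1}}\psi_{\hat k}\|_{L^2}\leq Ch$ (formula \eqref{19052025pom1}); the discrete velocity $v^h=\psi_{\hat k}/h$ is then compact in $C^{1,\frac12}$, and — because up to that time the flat flow already coincides with the classical solution by Remark \ref{rem:uniqueness-strong} — its limit is identified, via \eqref{19052025fonzi} and \eqref{estimeoverl2}, with the classical normal velocity $V_{\bar t}$, whose tangential derivative obeys $\|\pa_{\pa E_{\bar t}}V_{\bar t}\|_{L^2}=\|\pa_{\pa E_{\bar t}}\kappa^\varphi_{E_{\bar t}}\|_{L^2}\leq L_0/2$ uniformly on $[0,T]$ because $L_0$ was chosen from the classical solution. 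Hence for $h$ small the discrete increment at the good time satisfies $\|\pa\psi_k\|_{L^2}\leq \tfrac{L_0}{2}h$, so $\|\pa\xi_k\|_{L^2}\leq L_0h$ and $\|\pa^2\psi_k\|_{L^2}\leq K_0\sqrt h$ — i.e.\ the restart hypothesis is recovered with the \emph{original} $L_0$, and Remark \ref{rem:thm2} extends the bounds \eqref{19052025ancora1} by a fixed amount $T_0/2$ past the continuation time $\tilde t$. Without this limiting identification of the discrete velocity with the classical one, your induction has no mechanism to reset the constants, and the argument as written would fail after finitely many blocks.
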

\begin{proof}
	Let $\{E_t\}_{t \in [0,T_e)}$ be the classical solution of \eqref{MAINEQsol} and fix $ T < T_e$. Since the classical solution is regular on $[0,T]$, there exist constants $ K_2,\sigma_2$ such that
$$
		E_t \in \mathfrak{H}^3_{K_2,\sigma_2}(E_0) \text{ for all } t \in [0,T].
$$
This condition implies the existence of $r_0>0$ such that $E_t $ satisfies the uniform ball condition (UBC) with radius $r_0$ for all  $t \in [0,T]$. Let $\hat \delta, T_0$ be the constants given in Theorem \ref{thmausilmain} and Remark \ref{rem:thm2}, and fix $\delta\leq  \hat \delta$. 
Let $k_0 \in \N$ be such that $T_0 \in [hk_0, h(k_0+1))$, and let $\{E^{h,\delta}_{hk}\}_{k \in \N}$ be an approximate constrained flat flow starting from $E_0$. As established in Theorem \ref{thmausilmain} and Remark \ref{rem:thm2}, we have:
\begin{equation}
	\begin{split}
		&\pa E^{h,\delta}_{h}= \{  x +\psi_1(x)\nu_{E_0}(x)\colon x \in \pa E_0 \},\\
		&\| \pa_{\pa E_0}\psi_1 \|_{L^2(\pa E^0)} \leq \frac{L_0}{2} h, \quad\| \pa_{\pa E_0}\xi_1 \|_{L^2(\pa E^0)} \leq L_0 h, \quad \| \pa^2_{\pa E_0} \psi_1 \|_{L^2(\pa E_0)} \leq L_0 \sqrt{h}, 
		\\
		&  \|\pa_{\pa E^{h,\delta}_t}\kappa_{E^{h,\delta}_t }^\varphi \|_{L^2(\pa E^{h,\delta}_t)} \leq  K_0 , \quad \| \pa^2_{\pa E^{h,\delta}_t} \kappa^\varphi_{E^{h,\delta}_t} \|_{L^2(\pa E^{h,\delta}_t)} \leq \frac{K_0}{\sqrt{h}} \,\, \forall t \in [0,T_0],
	\end{split} 
\end{equation}
where $L_0$ is as in formula \eqref{01072025form1} and such that
$$ \|\pa_{\pa E_t}\kappa_{E_t }^\varphi \|_{L^2(\pa E_t)} \leq  \frac{L_0}{2} \quad \forall t \in [0,T], $$
and $K_0$ is defined as in \eqref{12052025form-1}. We adopt the notation $E^h_t:= E^{h,\delta}_t$, $E_k:= E^{h,\delta}_{hk}$, and recall \eqref{agligderigligb} for the definition of $ E^{h,\delta}_t$.
The conclusion of the theorem follows from the next claim, together with Remarks \ref{rem:thm2} and \ref{rem:uniqueness-strong}.
\\
\textit{Claim:} For every $t \in [0,T]$,
\begin{equation}\label{19052025ancora1}
	\|\pa_{\pa E^h_t }\kappa_{E^h_t}^{\varphi} \|_{L^2(\pa E^h_t )} \leq K_0, \quad  \| \pa_{\pa E^h_t}^2 \kappa_{E^h_t}^\varphi \|_{L^2(\pa E^h_t )} \leq \frac{K_0}{ \sqrt{h}}.
\end{equation}
By Theorem \ref{thmausilmain} and Remark \ref{rem:thm2}, estimate \eqref{19052025ancora1} holds for all $t \in [0,T_0]$. We define
\begin{equation}\label{19052025ttilde}
	\tilde{t}: = \sup \{   s \in [T_0, T]: \text{ formula \eqref{19052025ancora1} is true for all }t \in[0,s]  \} .
\end{equation}
We will show that \eqref{19052025ancora1} remains valid for all $t \leq \tilde{t}+ \frac{T_0}{2}$, which implies the claim. To this end, let $\tilde{k} \in \N$ be such that $ \tilde{t}-\frac{T_0}{2} \in [h \tilde{k}, (\tilde{k}+1)h  )$. We have that $E_{\tilde{k}}$ satisfies \eqref{19052025ancora1}. Hence we apply Theorems \ref{thmausilmain} with $E_0= E_{\tilde{k}}$ to obtain that there exist $k_1 \in \N$ and $c>0$ (we recall that $c=c( K_0)$) such that $0<c \leq   h k_1=T_1 \leq  T_0$, and for all $ k \in \{ \tilde{k}, \dots, \tilde{k}+k_1 \}$
\begin{equation}
	\pa E_k= \{  x+ \psi_k(x)\nu_{E_{k-1}}(x)\colon x \in \pa E_{k-1}\}.
\end{equation}
	Using formula \eqref{zizz2}, we obtain 
\begin{equation}
	\| \pa_{\pa E_{k-1}}\xi_k \|^2_{L^2(\pa E_{k-1})}+ h \sum_{j=\tilde{k}}^{\tilde{k}+k_1} \| \pa^2_{\pa E_{k-1}} \psi_k \|^2_{L^2(\pa E_{k-1})} \leq C h^2,  
\end{equation}
for some constant $C$. Since  $0 <c \leq hk_1=T_1$, there exists $\hat{k} \in \{  \tilde{k}, \dots, \tilde{k}+ \lfloor \frac{k_1}{4} \rfloor \}$ such that
\begin{equation}\label{19052025pom1}
	\| \pa_{\pa E_{\hat{k}-1}}\xi_{\hat{k}} \|_{L^2(\pa E_{\hat{k}-1})}+  \|   \pa^2_{\pa E_{\hat{k}-1} } \psi_{\hat{k}} \|_{L^2(\pa E_{\hat{k}-1})} \leq C h.
\end{equation}
From this and using the very definition of $\tilde{t}$, see \eqref{19052025ttilde}, we have
$$h\hat{k} \leq h (\tilde{k}+ \lfloor \frac{k_1}{4} \rfloor ) \leq \tilde{t}.$$
Note that in the minimizing movements scheme, each set $E_j$  is of class $C^4$, since it solves the Euler–Lagrange equation \eqref{eqeulerobvera}. 
Moreover, $E_{\hat{k}}$ is uniformly $C^{2,\frac{1}{2}}$-regular. Let $t_h= \hat{k}h$ and define
\begin{equation}
	v^h(t_h,x):= \frac{\psi_{\hat{k}}(x) }{h}.
\end{equation}
By \eqref{19052025pom1} and the Sobolev embedding theorem, we obtain
\begin{equation}\label{19052025pom2}
	\| v^h(t_h,\cdot)\|_{C^{1,\frac{1}{2}}(\pa E_{\hat{k}-1})} \leq C.
\end{equation}
Since $t_h = \hat{k}h \in [\tilde{t}- \frac{T_0}{2}, \tilde{t}]$, by passing to a subsequence if necessary, we may assume
\begin{equation}\label{19052025pom3}
	\exists \lim_{h \rightarrow 0^+} t_h= \bar{t} .
\end{equation}
From \eqref{19052025pom2} and \eqref{19052025pom3}, it follows that
\begin{equation}
	v^h(t_h,\cdot) \rightarrow v(\bar t, \cdot) \, \text{in } C^{1,\frac{1}{2}} \text{ as } h \rightarrow 0^+, \, \| v(\bar t, \cdot )\|_{C^{1,\frac{1}{2}}(\pa E^\delta_{\bar t})} \leq C.
\end{equation}
Therefore,
\begin{equation}
	\lim_{h \rightarrow 0^+} \| v^h(t_h,\cdot)\|_{L^2(\pa E_{\hat{k}-1})}= \| v(\bar t, \cdot )\|_{L^2(\pa E^\delta_{\bar t}   )}.
\end{equation}
Since we assumed that \eqref{19052025ancora1} holds for all $t \leq \tilde{t}$,
and since $\bar t \leq \tilde{t}$, Remark \ref{rem:uniqueness-strong} implies that the flat flow agrees with the classical solution up to time $\tilde{t}$.
Using \eqref{19052025fonzi} and \eqref{estimeoverl2} (with $E(\tilde{t}- \frac{T_0}{2})$ in place of $E_0$), we find that $v(\bar t, \cdot)$ coincides with the normal velocity $V_{\bar t}$ of the classical solution $\{ E_t\}_{t\geq 0}$, and
\begin{equation}
	  \| \pa_{\pa E_{\bar t}} v(\bar t,\cdot) \|_{L^2(\pa E^\delta_{\bar t} )} = \| \pa_{\pa E_{\bar t} } V_{\bar t} \|_{L^2(\pa E_{\bar t})} = \|\pa_{\pa E_t}\kappa_{E_t }^\varphi \|_{L^2(\pa E_t)} \leq  \frac{L_0}{2}.   
\end{equation}
Thus,
\begin{equation}
	\| \pa_{E_k}\psi_{k}\|_{L^2(\pa E_{k-1})} \leq \frac{L_0}{2}h,
\end{equation}
which implies
$$  \| \pa_{E_{k-1}}\xi_{k}\|_{L^2(\pa E_{k-1})} \leq L_0h. $$
Using \eqref{19052025pom1}, we also obtain
\begin{equation}
	\| \pa^2_{\pa E_{k-1}}\psi_{k}\|_{L^2(\pa E_{k-1})} \leq  C h \leq K_{0} \sqrt{h}
\end{equation}
for $h$ small enough. Now, using the Euler equation, the above estimates, and Remark \ref{rem:thm2}, we conclude that
$$ \|\pa_{\pa E^h_t }\kappa_{E^h_t}^{\varphi} \|_{L^2(\pa E^h_t )} \leq K_0, \quad  \| \pa_{\pa E^h_t}^2 \kappa_{E^h_t}^\varphi \|_{L^2(\pa E^h_t )} \leq \frac{K_0}{ \sqrt{h}} \, \, t \in [\tilde{t}- \frac{T_0}{2},\tilde{t}+ \frac{T_0}{2}].  $$
Repeating this argument a finite number of times yields the claim.
\end{proof}

\appendix
\section{} 
    In this appendix, we prove the estimates for the quantities
    \begin{equation}
        \int_{\pa E} R_0 \pa^4_\tau \psi, \quad  \| \pa_\tau R_0 \|_{L^2(\pa E)}
    \end{equation}
that were used in the proof of Theorem~\ref{mainthm}.
    \begin{lemma}\label{lemmaapendicead}
   Let the notation and assumptions of Theorem \ref{mainthm} be in force. If we assume \eqref{tesistep2}, then formula \eqref{veform4} holds, that is,
   \begin{equation}
   \label{formutileoggi.0}
     \int_{\pa E} R_0 \pa_{\tau}^4 \psi \leq C(K)+(\frac{c_g}{9}+3\varepsilon) \int_{\pa E} \vert \pa_{\tau}^3 \psi \vert^2 + \frac{C(K)}{\sqrt{h}} \int_{\pa E} \vert \pa_{\tau}^2 \psi \vert^2.
   \end{equation}
   Furthermore, if we assume \eqref{tesistep3}, then the following estimate holds:
   \begin{equation}
    \label{formutileoggi.1}
    \|\pa_{\tau} R_0 \|_{L^2(\pa E)} \leq C(K) . 
\end{equation}
\end{lemma}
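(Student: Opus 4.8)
The plan is to expand $R_0$ according to its definition in \eqref{eqeulroresto}, namely
\[
R_0=a(\nu_E,\psi\kappa_E,\pa_\tau\psi)\,\pa_\tau^2\psi+b(\nu_E,\psi\kappa_E,\pa_\tau\psi)\,\pa_\tau(\psi\kappa_E)+c(\nu_E,\psi,\pa_\tau\psi,\kappa_E),
\]
and to estimate the pairing $\int_{\pa E}R_0\,\pa_\tau^4\psi$ term by term, repeatedly integrating by parts once to move a $\pa_\tau$ off the top derivative $\pa_\tau^4\psi$ and onto the lower-order factors. After one integration by parts, $\int_{\pa E}R_0\,\pa_\tau^4\psi=-\int_{\pa E}\pa_\tau R_0\,\pa_\tau^3\psi$, so the whole argument reduces to controlling $\pa_\tau R_0$ in $L^2$, which is precisely the content of the second claim \eqref{formutileoggi.1}; I would therefore prove \eqref{formutileoggi.1} first and then deduce \eqref{formutileoggi.0} by Cauchy--Schwarz and Young, isolating the $\frac{c_g}{9}\|\pa_\tau^2\psi\|_{L^2}$-type terms produced by the $a\,\pa_\tau^2\psi$ summand exactly as in \eqref{160625pom1}.

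First I would record the baseline bounds available from Step 1 (i.e.\ \eqref{tesistep2}) and from \eqref{tesistep3}: $\|\psi\|_{L^2}\le C(K)h$, $\|\pa_\tau\psi\|_{L^2}\le C(K)\sqrt h$, $\|\psi\|_{C^1}\le\eta$, $\|\psi\|_{W^{1,\infty}}\le C(K)$, $\|\pa_\tau^2\psi\|_{L^\infty}\le C(K)/\sqrt h$, together with the curvature hypothesis \eqref{eq:assumption.curvature} which gives $\|\kappa_E\|_{L^\infty}+\|\pa_\tau\kappa_E\|_{L^2}\le K$ and $\sqrt h\,\|\pa_\tau^2\kappa_E\|_{L^2}\le K$. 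Then I would differentiate $R_0$ once by the chain rule: each of $a,b,c$ is smooth, so $\pa_\tau$ of $a(\nu_E,\psi\kappa_E,\pa_\tau\psi)$ produces $\nabla a$ contracted against $\pa_\tau\nu_E=\kappa_E\tau_E$, $\pa_\tau(\psi\kappa_E)$, and $\pa_\tau^2\psi$; similarly for $b$; and $\pa_\tau c$ produces $\nabla c$ against $\kappa_E\tau_E$, $\pa_\tau\psi$, $\pa_\tau^2\psi$, $\pa_\tau\kappa_E$. Collecting terms, $\pa_\tau R_0$ is a sum of: $a\,\pa_\tau^3\psi$ plus $b\,\pa_\tau^2(\psi\kappa_E)$ plus lower-order products of $\pa_\tau^2\psi$, $\pa_\tau(\psi\kappa_E)$, $\pa_\tau^2\kappa_E$, $\pa_\tau\psi$, $\kappa_E$ with bounded smooth coefficients. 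Using $\|\pa_\tau^3\psi\|_{L^2}\le C(K)$ (from \eqref{tesistep3}), $\|a\|_{L^\infty}\le C(K)$, and the Leibniz expansion $\pa_\tau^2(\psi\kappa_E)=\pa_\tau^2\psi\,\kappa_E+2\pa_\tau\psi\,\pa_\tau\kappa_E+\psi\,\pa_\tau^2\kappa_E$ — where the three pieces are bounded in $L^2$ respectively by $\|\pa_\tau^2\psi\|_{L^2}\|\kappa_E\|_{L^\infty}$, $\|\pa_\tau\psi\|_{L^\infty}\|\pa_\tau\kappa_E\|_{L^2}$, and $\|\psi\|_{L^\infty}\|\pa_\tau^2\kappa_E\|_{L^2}\le C(K)\sqrt h\cdot K/\sqrt h=C(K)$ — every summand of $\pa_\tau R_0$ is $C(K)$ in $L^2$, giving \eqref{formutileoggi.1}.

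For \eqref{formutileoggi.0} I would instead keep the unintegrated form and be careful with the single dangerous term. Integrating $\int_{\pa E}a\,\pa_\tau^2\psi\,\pa_\tau^4\psi$ by parts once yields $-\int_{\pa E}\pa_\tau(a\,\pa_\tau^2\psi)\,\pa_\tau^3\psi=-\int_{\pa E}a\,(\pa_\tau^3\psi)^2-\int_{\pa E}\pa_\tau a\,\pa_\tau^2\psi\,\pa_\tau^3\psi$; since $\|a\|_{L^\infty}\le\frac{c_g}{9}$ for $\eta$ small (as in Step 1), the first piece is $\le\frac{c_g}{9}\int|\pa_\tau^3\psi|^2$, and the cross term is absorbed by Young into $\varepsilon\int|\pa_\tau^3\psi|^2+\frac{C(K)}{\varepsilon}\|\pa_\tau^2\psi\|_{L^\infty}\int|\pa_\tau^2\psi|^2\le\varepsilon\int|\pa_\tau^3\psi|^2+\frac{C(K)}{\sqrt h}\int|\pa_\tau^2\psi|^2$ using $\|\pa_\tau^2\psi\|_{L^\infty}\le C(K)/\sqrt h$. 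The $b$ and $c$ contributions, after one integration by parts, pair $\pa_\tau^3\psi$ against $\pa_\tau(b\,\pa_\tau(\psi\kappa_E))$ and $\pa_\tau c$, each of which is $C(K)$ in $L^2$ by the estimates of the previous paragraph (no $\pa_\tau^3\psi$ or $\pa_\tau^4\psi$ appears, and the worst factor $\psi\,\pa_\tau^2\kappa_E$ is again $C(K)$), so Young gives $2\varepsilon\int|\pa_\tau^3\psi|^2+C(K)$. Summing the three contributions produces exactly the right-hand side of \eqref{formutileoggi.0} with the stated constant $\tfrac{c_g}{9}+3\varepsilon$. The main obstacle, and the only place where the scaling is tight, is the term $\psi\,\pa_\tau^2\kappa_E$: one must not estimate $\|\pa_\tau^2\kappa_E\|_{L^2}$ alone (it is only $O(1/\sqrt h)$) but always pair it with the gain $\|\psi\|_{L^\infty}\le C(K)\sqrt h$ from \eqref{13112024sera1}, and likewise the cross term involving $\|\pa_\tau^2\psi\|_{L^\infty}$ must be kept with its $1/\sqrt h$ weight rather than further bounded — getting the bookkeeping of these $h$-powers right is what makes the estimate close.
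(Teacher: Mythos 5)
Your proposal is correct and takes essentially the same route as the paper's proof: the same term-by-term splitting of $R_0$, one integration by parts per term, the smallness $\|a\|_{L^\infty}\leq \tfrac{c_g}{9}$ for the $a\,\pa_\tau^2\psi$ part, the use of $\|\pa_\tau^2\psi\|_{L^\infty}\leq C(K)/\sqrt h$ for the cross term, the crucial pairing of $\|\psi\|_{L^\infty}\leq C(K)\sqrt h$ with $\|\pa_\tau^2\kappa_E\|_{L^2}\leq K/\sqrt h$, and the chain-rule expansion of $\pa_\tau R_0$ for \eqref{formutileoggi.1}. Only your opening suggestion to obtain \eqref{formutileoggi.0} from \eqref{formutileoggi.1} via $-\int_{\pa E}\pa_\tau R_0\,\pa_\tau^3\psi$ should be discarded (it would require \eqref{tesistep3}, which in Theorem \ref{mainthm} is itself derived from \eqref{veform4}, and it loses the $\tfrac{c_g}{9}$ structure); your actual argument in the last paragraph correctly avoids this and, like the paper, uses only \eqref{tesistep2}-level bounds for the first estimate.
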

 \begin{proof}
We divide the proof into two steps.\\
\textit{Step 1:} In this step, we prove \eqref{formutileoggi.0}.

Recall that $R_0$ is defined in \eqref{eqeulroresto} and consists of several terms. We begin by analyzing the term
$$\int_{\pa E} a(\nu_E,\psi \kappa_E,\pa_\tau\psi) \pa_\tau^2 \psi \pa_{\tau}^4 \psi.$$ 
We integrate by parts and apply Young's inequality to obtain
\begin{equation}\label{stimarest1R_0}
      \begin{split}
          \int_{\pa E} a(\nu_E,\psi \kappa_E,\pa_\tau\psi) \pa_\tau^2 \psi \pa_\tau^4 \psi&= -\int_{\pa E} a(\nu_E,\psi \kappa_E,\pa_\tau\psi) \vert \pa_\tau^3 \psi \vert^2 - \int_{\pa E} \pa_\tau a(\nu_E,\psi \kappa_E, \pa_\tau \psi) \pa_\tau^3 \psi \pa_\tau^2 \psi \\
          & \leq \frac{c_g}{9} \int_{\pa E} \vert \pa_\tau^3 \psi \vert^2+ \varepsilon \int_{\pa E} \vert \pa_\tau^3 \psi \vert^2 + \frac{1}{\varepsilon} \int_{\pa E} \vert \pa_\tau a (\nu_E,\psi \kappa_E,\pa_\tau \psi)  \vert^2 \vert \pa_\tau^2 \psi \vert^2\\
          & \leq (\frac{c_g}{9}+\varepsilon) \int_{\pa E} \vert \pa_\tau^3 \psi \vert^2 + \frac{C(K)}{\varepsilon\sqrt{h}} \int_{\pa E} \vert \pa_\tau^2 \psi \vert^2,
      \end{split}
  \end{equation}
where in the first inequality we used Young's inequality and the estimate \begin{equation}\label{eq:stima.tecn.}
      \|a(\nu_E,\psi \kappa_E,\pa_{\tau} \psi) \|_{L^\infty(\pa E)} \leq \frac{c_g}{9}.
  \end{equation}
For the second inequality in \eqref{stimarest1R_0}, we used the identity
  \begin{equation}\label{difffa}
   \pa_\tau a(\nu_E,\psi \kappa_E, \pa_\tau \psi)= \hat Z(\kappa_E, \psi \kappa_E, \pa_{\tau} \psi)+ \hat A(\nu_E,\psi \kappa_E, \pa_\tau \psi) \pa_\tau(\psi \kappa_E)+ \hat B(\nu_E,\psi \kappa_E, \pa_\tau \psi) \pa_\tau^2 \psi, 
  \end{equation}
   where $ \hat Z, \hat A,  \hat B \in C^\infty$ and we used the estimates \eqref{eq:assumption.curvature} and \eqref{infitolapsi}.\\
To proceed, we analyze the integral $$\int_{\pa E}b(\nu_E,\psi \kappa_E,\pa_\tau\psi)\pa_{\tau} (\psi \kappa_E) \pa_\tau^3 \psi.$$ 
Integrating by parts, we obtain 
\begin{equation}\label{formulaz1}
    \begin{split}
        \int_{\pa E} b(\nu_E,\psi \kappa_E,\pa_\tau \psi) \pa_\tau(\psi \kappa_{E}) \pa_\tau^4 \psi  =&-  
        \int_{\pa E} b(\nu_E,\psi \kappa_E,\pa_\tau \psi) \pa_\tau^2 (\psi \kappa_E ) \pa_\tau^3 \psi\\
        & \quad
        -
        \int_{\pa E}  \pa_\tau b(\nu_E,\psi \kappa_E,\pa_\tau \psi) \pa_\tau(\psi \kappa_E) \pa_\tau^3 \psi.
    \end{split}
\end{equation}
To estimate the first term on the right-hand side, we apply the Leibniz rule and Young’s inequality:
\begin{equation}\label{formulaz2}
    \begin{split}
      \int_{\pa E} b(\nu_E,\psi \kappa_E,\pa_\tau \psi) \pa_\tau^2 (\psi \kappa_E ) \pa_\tau^3 \psi 
      &= \int_{\pa E} b(\nu_E,\psi\kappa_E,\pa_\tau \psi)  \big[ \pa_\tau^2 \psi \kappa_E +2 \pa_\tau \kappa_E\pa_\tau \psi+ \psi \pa_\tau^2\kappa_E\big] \pa_\tau^3 \psi \\
      &\leq \varepsilon \int_{\pa E} \vert \pa_\tau^3 \psi \vert^2 +\frac{C(K)}{\varepsilon} \int_{\pa E} \vert \pa_\tau^2 \psi \vert^2
      \\
      &\;\;+ \frac{C(K)}{\varepsilon} \int_{\pa E} \vert \pa_\tau \kappa_E \vert^2+ \frac{C(K)}{\varepsilon} \| \psi \|_{L^\infty(\pa E)}^2 \int_{\pa E} \vert \pa_\tau^2 \kappa_E \vert^2  \\
      &\leq \varepsilon \int_{\pa E} \vert \pa_{\tau}^3  \psi \vert^2 +\frac{C(K)}{\varepsilon} \int_{\pa E} \vert \pa_{\tau}^2 \psi \vert^2 + \frac{C(K)}{\varepsilon}
      \\
      &\leq \varepsilon \int_{\pa E} \vert \pa_{\tau}^3  \psi \vert^2+ \frac{C(K)}{\varepsilon},
    \end{split}
\end{equation}
where we used \eqref{eq:stima.tecn.} and \eqref{13112024sera1}.
To handle the second term in \eqref{formulaz1}, we first observe that
 \begin{equation}\label{difffb}
     \pa_{\tau} b(\nu_E,\psi \kappa_E,\pa_{\tau} \psi)=\hat{Y}(\kappa_E,\psi \kappa_E, \pa_{\tau} \psi) +\hat C(\nu_E,\psi\kappa_E, \pa_{\tau} \psi)\pa_{\tau}(\psi \kappa_E)+\pa_{\tau}^2  \psi \hat D(\nu_E,\psi \kappa_E,\pa_{\tau} \psi),
\end{equation}
where $ \hat{Y},\,\hat C,\, \hat D \in C^{\infty}$.
Moreover, we have the estimate
\begin{equation} \label{difffd}
    \| \pa_{\tau} (\psi \kappa_E) \|_{L^\infty(\pa E)} = \| \kappa_{E}\pa_{\tau} \psi+ \psi \pa_{\tau} \kappa_E \|_{L^\infty(\pa E)} \leq C(K)+ \sqrt{h} \frac{C(K)}{\sqrt{h}} \leq C(K).
\end{equation}
Using \eqref{difffb}, \eqref{difffd}, and \eqref{tesistep2}, we estimate:
\begin{equation}\label{formulaz3}
    \begin{split}
      | \int_{\pa E} \pa_{\tau} b(\nu_e,\psi \kappa_E,\pa_{\tau} \psi) \pa_{\tau}(\psi \kappa_E) \pa_{\tau}^3 \psi 
         |&\leq C(K)\int_{\pa E}|\pa_\tau b( \nu_E,\psi \kappa_E,\pa_{\tau} \psi  ) |\, |\pa_{\tau}^3 \psi|
        \\
         &\leq\varepsilon \int_{\pa E} \vert \pa_{\tau}^3 \psi \vert^2+ \frac{C(K)}{\varepsilon} \int_{\pa E} \big(  \vert \pa_{\tau}^2 \psi \vert^2 +1\big)\\
        &\leq \varepsilon \int_{\pa E}     \vert \pa_{\tau}^3 \psi \vert^2+ \frac{C(K)}{\varepsilon}.
        \end{split}
\end{equation}
Combining \eqref{formulaz1}, \eqref{formulaz2}, and \eqref{formulaz3}, we conclude that
\begin{equation}\label{stimarest2R_0}
    \int_{\pa E} b(\nu_E,\psi \kappa_E,\pa_{\tau} \psi) \pa_{\tau}(\psi \kappa_{E}) \pa_{\tau}^2 \psi \leq \varepsilon \int_{\pa E} \vert \pa_{\tau}^3   \psi \vert^2+ \frac{C(K)}{\varepsilon}.
\end{equation}
The final term we need to analyze is $$ \int_{\pa E} c(\nu_E,\psi,\pa_{\tau} \psi, \kappa_E) \pa_{\tau}^4 \psi.$$
To this end, we compute:
\begin{equation}\label{stimarest3R_0}
    \begin{split}
        \int_{\pa E} c(\nu_E,\psi,\pa_{\tau} \psi, \kappa_E) \pa_{\tau}^4 \psi 
        &=-\int_{\pa E} \pa_{\tau}^3 \psi  \big[ \hat{X}( \psi,\pa_{\tau} \psi, \kappa_E)+\hat E(\nu_E,\psi,\pa_{\tau}\psi, \kappa_E)\pa_{\tau}\psi \big]\\
        &\;\; +\int_{\pa E}  \pa_{\tau}^3 \psi  \big[ \hat F(\nu_E,\psi,\pa_{\tau}\psi,\kappa_E)\pa_{\tau}^2\psi+ \hat G(\nu_E,\psi,\pa_{\tau} \psi,\kappa_E)\pa_{\tau}\kappa_E \big]\\
         &\leq  \varepsilon \int_{\pa E} \vert \pa_{\tau}^3 \psi \vert^2+ \frac{C(K)}{\varepsilon} \int_{\pa E} 1+\vert \pa_{\tau} \psi \vert^2+ \vert \pa_{\tau} \psi \vert^2 + \vert \pa_{\tau} \kappa_E \vert^2 \\
    &\leq \varepsilon \int_{\pa E} \vert \pa_{\tau}^3 \psi \vert^2+ \frac{C(K)}{\varepsilon},
    \end{split}
\end{equation}
where
\begin{equation}\label{difffc}
\begin{split}
   & \pa_{\tau}c(\nu_E,\psi,\pa_{\tau} \psi, \kappa_E)=\hat{X}( \psi, \pa_\tau \psi, \kappa_E)\\
    &\quad\qquad\qquad+\hat E(\nu_E,\psi,\pa_{\tau}\psi, \kappa_E)\pa_{\tau} \psi + \hat F(\nu_E,\psi,\pa_{\tau} \psi,\kappa_E)\pa_{\tau}^2\psi+ \hat G(\nu_E,\psi,\pa_{\tau} \psi,\kappa_E)\pa_{\tau}\kappa_E 
\end{split}
\end{equation}
with $\hat{X},\hat E,\hat F, \hat G \in C^{\infty}$. In the last inequality of \eqref{stimarest3R_0} we used \eqref{tesistep2}. By the very definition of $R_0$ (see \eqref{interpol1}), and using \eqref{stimarest1R_0}, \eqref{eq:stima.tecn.}, \eqref{formulaz1}, \eqref{formulaz2}, \eqref{formulaz3} and \eqref{stimarest3R_0}, we obtain \eqref{formutileoggi.0}.
\\
\textit{Step 2: }In this step, we prove \eqref{formutileoggi.1}.

Using \eqref{difffa}, \eqref{difffb}, and \eqref{difffc}, we obtain:
\begin{equation*}
\begin{split}
    \pa_{\tau} R_0& = \pa_{\tau}^3 \psi a(\cdot,\psi\kappa_E,\pa_{\tau} \psi) \\
    &\qquad+ \pa_{\tau}^2 \psi \big[ \hat{Z}(\cdot, \psi\kappa_E, \pa_\tau \psi)+\hat A(\cdot,\psi \kappa_E, \pa_{\tau} \psi) \pa_{\tau}(\psi \kappa_E)+\hat B(\cdot,\psi \kappa_E, \pa_{\tau} \psi) \pa_{\tau}^2 \psi   
 \big] \\
 & \qquad +\pa_{\tau}^2(\psi \kappa_E) b(\cdot,\psi \kappa_E,\pa_{\tau} \psi)\\
 & \qquad+ \big[\hat{Y}(\cdot,\psi \kappa_E, \pa_{\tau} \psi) +\hat C(\cdot,\psi\kappa_E, \pa_{\tau} \psi)\pa_{\tau}(\psi \kappa_E)+\pa_{\tau}^2  \psi \hat D(\cdot,\psi \kappa_E,\pa_{\tau} \psi) \big] \pa_{\tau}(\psi \kappa_E)\\
 & \qquad+\hat{X}( \psi, \pa_\tau \psi, \kappa_E)+ \hat E(\nu_E,\psi,\pa_{\tau}\psi, \kappa_E)\pa_{\tau} \psi \\
 & \qquad\qquad\qquad+ \hat F(\nu_E\psi,\pa_{\tau}\psi,\kappa_E)\pa_{\tau}^2\psi+ \hat G(\nu_E,\psi,\pa_{\tau} \psi,\kappa_E)\pa_{\tau}\kappa_E.
\end{split}
\end{equation*}
Therefore, combining this with \eqref{tesistep2}, \eqref{tesistep3}, and using the Sobolev embedding, we deduce that
\begin{equation*} 
    \|\pa_{\tau} R_0 \|_{L^2(\pa E)} \leq C(K) + \| \pa_{\tau}^2 \kappa_{E} \|_{L^2(\pa E)} \| \psi \|_{L^\infty(\pa E)} \leq C(K) +\frac{C(K)}{\sqrt{h}} \sqrt{h}\leq C(K).
\end{equation*}
 \end{proof}

\section*{Acknowledgments} 
A.\ Kubin was supported by the Academy of Finland grant 314227. 
D.\ A.\ La Manna is a member of GNAMPA and has received funding from INdAM under the INdAM--GNAMPA Project 2025 \textit{Local and nonlocal equations with lower order terms} (grant agreement No.\ CUP\_E53\-240\-019\-500\-01).
E.\ Pasqualetto was supported by the Research Council of Finland grant 362898.


\begin{thebibliography}{99}


\bibitem{AlmgrenSimonSchoen1977} \textsc{F. Almgren, R. Schoen, L. Simon}:
\emph{Regularity and singularity estimates on hypersurfaces minimizing parametric elliptic variational integrals}. Acta Mathematica, {\bf 139(1)} (1977), 217-265.

\bibitem{ATW1993} \textsc{F. Almgren, J. E. Taylor, L. Wang}: \emph{Curvature-driven flows: a variational approach}. SIAM Journal on Control and Optimization, {\bf 31(2)} (1993), 387-438.
\bibitem{AlmgrenTaylor1995}\textsc{F. Almgren, J.E. Taylor}:  \emph{Flat flow is motion by crystalline curvature for curves with crystalline energies}.  J. Differential Geom. {\bf 42} (1995), 1-22
\bibitem{AmbFuscPall}
\textsc{L. Ambrosio, N. Fusco, D. Pallara}: {\it Functions of bounded variation and free discontinuity problems}. Oxford University Press, Oxford, 2000.
\bibitem{AGS2005} \textsc{L. Ambrosio, N. Gigli, G. Savar\'e}: \emph{Gradient flows: in metric spaces and in the space of probability measures.}  Basel: Birkhäuser Basel (2005).
\bibitem{AMcFW1998} \textsc{D.M. Anderson, G. B. McFadden, A.A. Wheeler}: \emph{Diffuse-interface methods in fluid mechanics.} Annual review of fluid mechanics, {\bf 30(1)}, (1998), 139-165.
\bibitem{ADGK2025} \textsc{V. Arya, D. De Gennaro, A. Kubin}: \emph{The asymptotic of the Mullins-Sekerka and the area-preserving curvature flow in the planar flat torus}. (2025), preprint arXiv:2503.05399.
\bibitem{BCCN2009} \textsc{G. Bellettini, V. Caselles, A. Chambolle, M. Novaga}: \emph{The volume preserving crystalline mean curvature flow of convex sets in $\R^N$}. Journal de mathématiques pures et appliquées, {\bf 92(5)}, (2009), 499-527.
\bibitem{Bombieri1982} \textsc{E. Bombieri}: \emph{Regularity theory for almost minimal currents}. 
Archive for Rational Mechanics and Analysis, {\bf 78(2)}, (1982), 99-130.
\bibitem{CS2010} \textsc{L. A. Caffarelli, P. E. Souganidis}: \emph{Convergence of nonlocal threshold dynamics approximations to front propagation}. Archive for rational mechanics and analysis, {\bf 195(1)} (2010), 1-23.
\bibitem{CahnHoffman1972} \textsc{J.W. Cahn, D.W. Hoffman}: \emph{A vector thermodynamics for anisotropic surfaces: I. Fundamentals and application to plane surface junctions}. Surface Science, {\bf 31}, (1972), 368-388.
\bibitem{CaTa1994} \textsc{J.W. Cahn, J.E. Taylor}: \emph{Overview no. 113 surface motion by surface diffusion}. Acta Metallurgica et Materialia {\bf 42} (1994), 1045-1063
\bibitem{CGG1999} \textsc{Y. G. Chen, Y. Giga, S. I. Goto}: \emph{Uniqueness and existence of viscosity solutions of generalized mean curvature flow equations.} In Fundamental Contributions to the Continuum Theory of Evolving Phase Interfaces in Solids: A Collection of Reprints of 14 Seminal Papers (pp. 375-412). Berlin, Heidelberg: Springer Berlin Heidelberg (1999).
\bibitem{CMP2015} \textsc{A. Chambolle, M. Morini, M. Ponsiglione}: \emph{Nonlocal curvature flows}. Archive for Rational Mechanics and Analysis, {\bf 218(3)} (2015), 1263-1329.
\bibitem{CFJKsd} \textsc{M. Cicalese, N. Fusco, V. Julin, A. Kubin}: \emph{Consistency for the surface diffusion flat flow in three dimensions.} (2025), preprint arXiv:2502.13556.
\bibitem{CKub} \textsc{M. Cicalese, A. Kubin}: \emph{Discrete and continuum area-preserving mean-curvature flow of rectangles}. Annali Scuola Normale Superiore - Classe di Scienze, (2024), 48.
\bibitem{DGDKK}
\textsc{D. De Gennaro, A. Diana, A. Kubin, A. Kubin}:
 \emph{Stability of the surface diffusion flow and volume-preserving mean curvature flow in the flat torus. } Mathematische Annalen, (2024). 1-33.
 \bibitem{DGK2023} \textsc{D. De Gennaro, A. Kubin}: \emph{Long time behaviour of the discrete volume preserving mean curvature flow in the flat torus}. Calculus of Variations and Partial Differential Equations, {\bf 62(3)} (2023), 103.
 \bibitem{DGKK2023} \textsc{D. De Gennaro, A. Kubin, A. Kubin}: \emph{Asymptotic of the discrete volume-preserving fractional mean curvature flow via a nonlocal quantitative Alexandrov theorem}. Nonlinear Analysis (2023), 228, 113200. 
\bibitem{DeG1993} \textsc{E. De Giorgi}: \emph{New problems on minimizing movements}. Ennio de Giorgi: Selected Papers (1993),
699–713.
\bibitem{DePMa2015} \textsc{G. De Philippis, F. Maggi}: 
\emph{Regularity of free boundaries in anisotropic capillarity problems and the validity of Young’s law}. Archive for Rational Mechanics and Analysis {\bf 216} (2015), 473-568
\bibitem{DFM2023} \textsc{A. Diana, N. Fusco, C. Mantegazza}:  \emph{Stability for the surface diffusion flow}. Annales de l'institut Henri Poincar\'e analyse non lin\'eaire. - ISSN 0294-1449. - (2023).
\bibitem{EG1994} \textsc{C.M. Elliott, H. Garcke}: \emph{Existence results for diffusive surface motion laws.} Sonderforschungsbereich, 256, (1994).
\bibitem{EI2005} \textsc{J. Escher, K. Ito}: \emph{Some dynamic properties of volume preserving curvature driven flows}. Mathematische Annalen. {\bf 333(1)} (2005), 213-230.
\bibitem{ES1998} \textsc{J. Escher, G. Simonett}: \emph{The volume preserving mean curvature flow near spheres}. Proceedings of the American Mathematical Society. {\bf 126(9)} (1998), 2789-2796.
\bibitem{EO2015} \textsc{S. Esedoglu, F. Otto}: \emph{Threshold dynamics for networks with arbitrary surface tensions}. Communications on pure and applied mathematics, {\bf 68(5)} (2015), 808-864.
\bibitem{ES1991} \textsc{L. C. Evans, J. Spruck}: \emph{Motion of level sets by mean curvature. I.} In Fundamental Contributions to the Continuum Theory of Evolving Phase Interfaces in Solids: A Collection of Reprints of 14 Seminal Papers (pp. 328-374). Berlin, Heidelberg: Springer Berlin Heidelberg (1991).
\bibitem{E1993} \textsc{L. C. Evans}: \emph{Convergence of an algorithm for mean curvature motion}. Indiana University mathematics journal, (1993), 533-557.
\bibitem{FJM2018}\textsc{N. Fusco ,V. Julin, M.  Morini}:  \emph{The surface diffusion flow with elasticity in the plane.}  Comm. Math. Phys. {\bf 362} (2018),  571-607
\bibitem{FJM2022} \textsc{N. Fusco ,V. Julin, M.  Morini}: \emph{Stationary sets and asymptotic behavior of the mean curvature flow with forcing in the plane}.  The Journal of Geometric Analysis, {\bf 32(2)} (2022), 53.
\bibitem{Gurtin1982} \textsc{M.E. Gurtin}: \emph{An introduction to continuum mechanics}. Academic press, 1982.
\bibitem{H1987} \textsc{G. Huisken}: \emph{The volume preserving mean-curvature flow}. Journal für die Reine und Angewandte Mathematik. {\bf 382} (1987), 35-48. 
\bibitem{JMPS2023} \textsc{V. Julin, M. Morini, M. Ponsiglione, E. Spadaro}: \emph{The asymptotics of the area-preserving mean curvature and the Mullins–Sekerka flow in two dimensions}. Mathematische Annalen, {\bf 387(3)} (2023), 1969-1999.
\bibitem{JN2023} \textsc{V. Julin, J. Niinikoski}: \emph{Quantitative Alexandrov theorem and asymptotic behavior of the volume preserving mean curvature flow}.  Analysis \& PDE, {\bf 16(3)} (2023), 679-710.
\bibitem{JN} \textsc{V. Julin, J. Niinikoski}: \emph{Consistency of the flat flow solution to the volume preserving mean curvature flow}. Arch. Ration. Mech. Anal. {\bf 248} (2024), 58 pp.	
\bibitem{KK2020} \textsc{I. Kim, D. Kwon}: \emph{Volume preserving mean curvature flow for star-shaped sets.} Calculus of Variations and Partial Differential Equations, {\bf 59(2)} (2020), 81.
\bibitem{KimKwon2025}\textsc{E. Kim, D.  Kwon}: \emph{Area-preserving anisotropic mean curvature flow in two dimensions}. Calculus of Variations and Partial Differential Equations, (2025) 64(1), 1-39.
\bibitem{Kub} \textsc{A. Kubin}: \emph{On variational scheme modeling the anisotropic surface diffusion with elasticity in the plane}. (2025), preprint	arXiv:2506.14423.
\bibitem{LS1995} \textsc{S. Luckhaus, T. Sturzenhecker}: \emph{Implicit time discretization for the mean curvature flow equation}.  Calculus of variations and partial differential equations, { \bf 3(2)} (1995), 253-271.
\bibitem{Lunardi2018} \textsc{A. Lunardi}: \emph{Interpolation theory}. Lecture Notes, Ed. Norm., Pisa, 2018.
\bibitem{Maggibook} \textsc{F. Maggi}: {\it Sets of finite perimeter and geometric variational problems: an introduction to Geometric Measure Theory,} {\bf 135}. Cambridge University Press (2012).
\bibitem{Mantegazza2002} \textsc{C. Mantegazza}:
\emph{Smooth geometric evolutions of hypersurfaces}.
Geometric \& Functional Analysis GAFA, (2002), 12(1), 138-182.
\bibitem{MM2006} \textsc{P. W. Michor, D. B. Mumford}: \emph{Riemannian geometries on spaces of plane curves.} Journal of the European Mathematical Society, {\bf 8(1)} (2006), 1-48.
\bibitem{Mantegazza2011} \textsc{C. Mantegazza}: \emph{Lecture notes on mean curvature flow}. Springer Science \& Business Media, (2011), (Vol. 290).
\bibitem{M2001} \textsc{U. F. Mayer}: \emph{A singular example for the averaged mean curvature flow}. Experimental Mathematics, {\bf 10(1)}, (2001), 103-107.
\bibitem{MS2000} \textsc{U. F. Mayer, G. Simonett}: \emph{Self-intersections for the surface diffusion and the volume-preserving mean curvature flow}. Differential and Integral Equations, {\bf 13(7-9)} (2000), 1189-1199.
\bibitem{MBO1992} \textsc{B. Merriman, J. K. Bence, S. Osher}: \emph{Diffusion generated motion by mean curvature}. Department of Mathematics, University of California, Los Angeles. (1992), 72-83.
\bibitem{MPS2022} \textsc{M. Morini, M. Ponsiglione, E. Spadaro}: \emph{Long time behavior of discrete volume preserving mean curvature flows}. Journal für die reine und angewandte Mathematik (Crelles Journal), {\bf 2022(784) } (2022), 27-51.
\bibitem{MSS2016} \textsc{L. Mugnai, C. Seis, E. Spadaro}: \emph{Global solutions to the volume-preserving mean-curvature flow}. Calculus of Variations and Partial Differential Equations, {\bf 55(1)} (2016), 18.
\bibitem{N2021} \textsc{J. Niinikoski}: \emph{Volume preserving mean curvature flows near strictly stable sets in flat torus}. Journal of Differential Equations, {\bf 276} (2021), 149-186.
\bibitem{LO2016} \textsc{T. Laux, F. Otto}: \emph{Convergence of the thresholding scheme for multi-phase mean-curvature flow}. Calculus of Variations and Partial Differential Equations, {\bf 55(5)} (2016), 129.
\bibitem{SE2019} \textsc{T. Salvador, S. Esedoglu}: \emph{A simplified threshold dynamics algorithm for isotropic surface energies}. Journal of Scientific Computing, {\bf 79(1)} (2019), 648-669.
\bibitem{Taylor1991} \textsc{J. Taylor}: \emph{Constructions and conjectures in crystalline non-differential geometry}. Proceedings of the Conference on Differential Geometry, Pitman, London, (1991),  1991. p. 321-336.
\bibitem{Tamanini1984} \textsc{I. Tamanini}: {\it Regularity results for almost minimal oriented hypersurfaces in $\R^N$}. Quaderni del Dipartimento di Matematica dell’Universit\`a di Lecce (1984).

\end{thebibliography}
\end{document}